\theoremstyle{plain}
\newtheorem{theorem}{Theorem}[section]
\newtheorem{cor}[theorem]{Corollary}
\newtheorem{lem}[theorem]{Lemma}
\newtheorem{prop}[theorem]{Proposition}
\theoremstyle{definition}
\newtheorem{rem}[theorem]{Remark}
\numberwithin{equation}{section}
\numberwithin{theorem}{section}
\begin{document}

\title[Uniqueness for indefinite superlinear boundary condition]{Uniqueness of a positive solution for the Laplace equation with indefinite superlinear boundary condition} 

\author{Kenichiro Umezu} 

\address{Department of Mathematics, Faculty of Education, Ibaraki University, Mito 310-8512, Japan}

\email{\tt kenichiro.umezu.math@vc.ibaraki.ac.jp} 

\subjclass{35J20, 35J25, 35J65, 35B32, 35P30, 92D40}  

\keywords{Laplace equation, indefinite superlinear boundary condition, positive solution, spectral analysis,  variational approach, population genetics}

\thanks{This work was supported by JSPS KAKENHI Grant Number JP18K03353.}

    \maketitle

\begin{abstract}
In this paper, we consider the Laplace equation with a class of indefinite superlinear boundary conditions. 
Superlinear elliptic problems can be expected to have multiple positive solutions by some case. Conducting spectral analysis for the linearized eigenvalue problem at an unstable positive solution, we find sufficient conditions for ensuring that the implicit function theorem is applicable to the one, and then deduce the uniqueness result for a positive solution. An application of our results to the logistic boundary condition arising from population genetics is given. 
\end{abstract}


\section{Introduction and main results} 

\label{intro}

Let $\Omega$ be a bounded domain of $\mathbb{R}^N$, $N\geq1$, 
with smooth boundary $\partial\Omega$. Consider 
positive solutions of the Laplace equation with indefinite superlinear boundary condition 
\begin{align}  \label{sppr}
\begin{cases}
-\Delta v = 0 & \mbox{ in } \Omega, \\
\dfrac{\partial v}{\partial \nu}  
= \lambda g(x)(v+v^p) & \mbox{ on } \partial\Omega,   
\end{cases} 
\end{align}
where $p>1$ is a given exponent, $\lambda>0$ is a parameter, $g\in C^{1+\theta}(\partial\Omega)$ changes sign, 
and $\nu$ is the unit outer normal to $\partial\Omega$. Throughout this paper, unless stated otherwise, 
we assume that $p>1$ is {\it subcritical}, i.e., 
\begin{align} \label{psubcr}
1<p<\frac{N}{N-2} \ \mbox{ if } N>2. 
\end{align} 
A nonnegative solution $u\in C^{2+\alpha}(\overline{\Omega})$, $\alpha\in (0,1)$, of \eqref{sppr} is called \textit{positive} if $u\not\equiv 0$. 
Using the strong maximum principle and Hopf's boundary point lemma (\cite{PW67}), a positive solution of \eqref{sppr} is positive in $\overline{\Omega}$. 
For the existence of a positive solution, we can show that if $\int_{\partial\Omega}g\geq0$, 
no positive solution of \eqref{sppr} exists for any $\lambda>0$ (Proposition \ref{prop:non}). Therefore, in terms of the existence, we focus our consideration of \eqref{sppr} on the case when $\int_{\partial\Omega}g<0$.

When $\int_{\partial\Omega}g<0$, we call 
$\lambda_1(g)>0$ the {\it positive principal eigenvalue} (smallest positive eigenvalue) of the linear eigenvalue problem 
\begin{align} \label{epro01}
\begin{cases}
-\Delta \varphi = 0 & \mbox{ in } \Omega, \\
\dfrac{\partial \varphi}{\partial \nu}  
= \lambda g(x) \varphi & \mbox{ on } \partial\Omega. 
\end{cases}
\end{align}
Here, an eigenvalue of \eqref{epro01} is called {\it principal} if the eigenfunctions associated with it have constant sign. 
In fact, \eqref{epro01} has exactly two principal 
eigenvalues $\lambda=0, \lambda_1(g)$ if $\int_{\partial\Omega}g<0$, 
which are both simple and possess eigenfunctions that are positive in $\overline{\Omega}$ (\cite{Um2006}). We call $\varphi_1(g)$ a positive eigenfunction associated with $\lambda_1(g)$ 
(nonzero constants are the eigenfunctions associated with the principal eigenvalue $\lambda=0$). We note that 
the smallest eigenvalue of $(-\Delta, \frac{\partial}{\partial \nu}-\lambda g(x))$ is positive for $\lambda \in (0,\lambda_1(g))$ (Lemma \ref{lem:coer}). 
For the case of $\int_{\partial\Omega}g\geq0$, 
we know that \eqref{epro01} has no positive principal eigenvalue (i.e., zero is a unique nonnegative principal eigenvalue), thus, it is understood that $\lambda_1(g)=0$, and $\varphi_1(g)$ is a positive constant. 

In this paper, 
we aim to give a class of $g$ satisfying $\int_{\partial\Omega}g<0$ in which \eqref{sppr} has a {\it unique}  positive solution $v_\lambda$ for each  $\lambda \in (0,\lambda_1(g))$ 
and no positive solution for any $\lambda\geq\lambda_1(g)$. 
Moreover, we show that $v_\lambda\in C^{2}(\overline{\Omega})$ is parametrized smoothly by $\lambda\in (0,\lambda_1(g))$, bifurcates from the {\it constant line} $\{ (\lambda, 0 )\}$ at $\lambda=\lambda_1(g)$, satisfies $\| v_\lambda \|_{C(\overline{\Omega})} \rightarrow \infty$ as $\lambda\searrow0$, and is unstable (Figure \ref{figcurve}). In Section \ref{sec:Appl}, we apply this result to the Laplace equation with the logistic boundary condition, \eqref{lpr}.  
%
		 \begin{figure}[!htb]
    \centering 
	\includegraphics[scale=0.16]{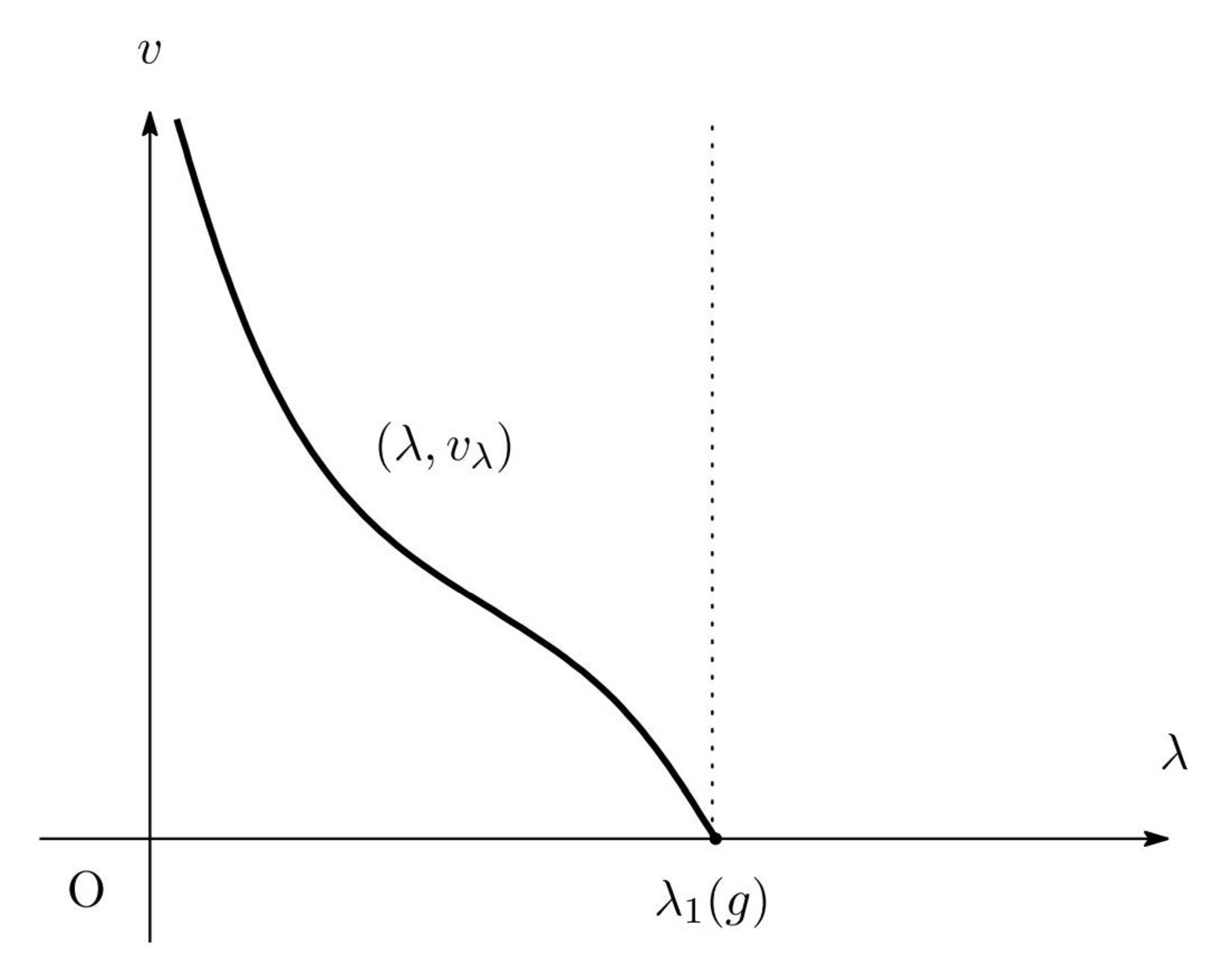} 
		  \caption{Positive solution set $\{ (\lambda,v_{\lambda})\}$ of \eqref{sppr} as a smooth curve.} 
		\label{figcurve} 
		    \end{figure} 

For our aim, the change of variables 
\begin{align} \label{change}
w=\lambda^{\frac{1}{p-1}}v
\end{align}
is useful, because it transforms \eqref{sppr} into the problem 
\begin{align} \label{pr:w}
\begin{cases}
-\Delta w = 0 & \mbox{ in } \Omega, \\  
\dfrac{\partial w}{\partial \nu}=\lambda g(x)w + g(x)w^p & 
\mbox{ on } \partial\Omega. 
\end{cases}
\end{align}
Before discussing the uniqueness issue, we consider the general case of $g$, and use the fibering map method to establish the existence of positive solutions and their properties for \eqref{pr:w} as follows:
\begin{theorem} \label{thm1}
Assume that $\int_{\partial\Omega}g(x)<0$. Then, \eqref{pr:w} possesses 
a positive solution $w_\lambda$ 
for every $\lambda\in (0,\lambda_1(g))$ and no positive solution for any $\lambda\geq \lambda_1(g)$. Furthermore, $w_{\lambda}$ satisfies the following: 
\begin{enumerate} \setlength{\itemsep}{0.2cm} 
\item $w_\lambda$ is unstable, 
\item $\| w_\lambda \|_{C^{2}(\overline{\Omega})}\rightarrow 0$ 
as $\lambda\nearrow \lambda_1(g)$, 
\item $\sup_{\lambda\in (0, \lambda_1(g))}\| w_\lambda \|_{C(\overline{\Omega})} <\infty$, and 
\item $\inf_{\lambda\in (0, \overline{\lambda})}\| w_\lambda \|_{C(\overline{\Omega})}>0$ for any $\overline{\lambda}
\in (0,\lambda_1(g))$. 
\end{enumerate}
\end{theorem}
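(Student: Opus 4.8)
The plan is to produce $w_\lambda$ variationally, by the fibering (Nehari) method, and then to read off properties (i)--(iv) from elliptic regularity together with two compactness/blow-up arguments. I work in $H^1(\Omega)$; by \eqref{psubcr} the trace embedding $H^1(\Omega)\hookrightarrow L^{p+1}(\partial\Omega)$ is compact, which will be used repeatedly. Consider the energy functional
\[
I_\lambda(w)=\tfrac12\|\nabla w\|_{L^2(\Omega)}^2-\tfrac{\lambda}{2}\int_{\partial\Omega}g\,w^2-\tfrac{1}{p+1}\int_{\partial\Omega}g\,|w|^{p+1},\qquad w\in H^1(\Omega),
\]
whose critical points are the weak solutions of \eqref{pr:w}. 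Put $A_\lambda(w)=\|\nabla w\|_{L^2(\Omega)}^2-\lambda\int_{\partial\Omega}g\,w^2$ and $B(w)=\int_{\partial\Omega}g\,|w|^{p+1}$, so that the fibering map is $t\mapsto I_\lambda(tw)=\frac{t^2}{2}A_\lambda(w)-\frac{t^{p+1}}{p+1}B(w)$, $t>0$. When $A_\lambda(w)>0$ and $B(w)>0$ this map has the unique positive critical point $t(w)=(A_\lambda(w)/B(w))^{1/(p-1)}$, at which it attains its global maximum. For $\lambda\in(0,\lambda_1(g))$ the form $A_\lambda$ is coercive on $H^1(\Omega)$ (a consequence of Lemma~\ref{lem:coer}), so $A_\lambda(w)>0$ for $w\neq0$; and since $g$ changes sign there are $w$ with $B(w)>0$. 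Hence the Nehari set $\mathcal N_\lambda=\{w\neq0:\langle I_\lambda'(w),w\rangle=0\}=\{w\neq0:A_\lambda(w)=B(w)\}$ is a nonempty $C^1$-manifold on which $I_\lambda=\frac{p-1}{2(p+1)}A_\lambda$; combining $A_\lambda(w)=B(w)\le C\|w\|_{H^1(\Omega)}^{p+1}$ with coercivity bounds $A_\lambda$ from below on $\mathcal N_\lambda$, so $c_\lambda:=\inf_{\mathcal N_\lambda}I_\lambda>0$.

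Next I would realise $c_\lambda$. A minimizing sequence is $H^1$-bounded by coercivity, so (subsequence) $w_n\rightharpoonup w$ in $H^1(\Omega)$ and $w_n\to w$ in $L^2(\partial\Omega)\cap L^{p+1}(\partial\Omega)$; then $B(w_n)\to B(w)$ and $\lambda\int_{\partial\Omega}g\,w_n^2\to\lambda\int_{\partial\Omega}g\,w^2$, while $B(w_n)=A_\lambda(w_n)$ stays bounded away from $0$, so $w\neq0$. Weak lower semicontinuity gives $A_\lambda(w)\le\liminf A_\lambda(w_n)=B(w)$. If the inequality were strict, the fiber through $w$ would meet $\mathcal N_\lambda$ at $t(w)\in(0,1)$ with $I_\lambda(t(w)w)=\frac{p-1}{2(p+1)}t(w)^2A_\lambda(w)<\frac{p-1}{2(p+1)}B(w)=c_\lambda$, a contradiction; hence $w\in\mathcal N_\lambda$ is a minimizer. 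Since $\langle J'(w),w\rangle=(1-p)A_\lambda(w)\neq0$, where $J(w):=\langle I_\lambda'(w),w\rangle$, the Lagrange multiplier vanishes and $w$ solves \eqref{pr:w} weakly; replacing $w$ by $|w|$ (which changes none of $A_\lambda,B,I_\lambda$) I may take $w\ge0$. Elliptic bootstrap --- legitimate precisely because $p$ is subcritical --- with $g\in C^{1+\theta}(\partial\Omega)$ gives $w\in C^{2+\alpha}(\overline\Omega)$, and the SMP together with Hopf's BPL gives $w_\lambda:=w>0$ in $\overline\Omega$. This settles existence for $\lambda\in(0,\lambda_1(g))$.

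For non-existence when $\lambda\ge\lambda_1(g)$ I would test \eqref{pr:w} against the positive principal eigenfunction $\varphi_1(g)$ and \eqref{epro01} against a positive solution $w$, obtaining $\int_{\partial\Omega}g\,w^p\varphi_1(g)=(\lambda_1(g)-\lambda)\int_{\partial\Omega}g\,w\,\varphi_1(g)$, and combine this with the Picone-type inequality from testing \eqref{pr:w} against $\varphi_1(g)^2/w$ (equivalently with the variational characterisation of $\lambda_1(g)$), using $\int_{\partial\Omega}g\,\varphi_1(g)^2>0$ --- which holds since $\|\nabla\varphi_1(g)\|_{L^2}^2=\lambda_1(g)\int_{\partial\Omega}g\,\varphi_1(g)^2$ with $\lambda_1(g)>0$ and $\varphi_1(g)$ non-constant --- to reach a contradiction for $\lambda\ge\lambda_1(g)$; this is a second point that requires some care. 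For property (i), inserting the test function $w_\lambda$ into the Rayleigh quotient of the linearisation of \eqref{pr:w} at $w_\lambda$ produces numerator $A_\lambda(w_\lambda)-p\int_{\partial\Omega}g\,w_\lambda^{p+1}=(1-p)A_\lambda(w_\lambda)<0$ (using $w_\lambda\in\mathcal N_\lambda$), so the principal eigenvalue of the linearised problem is negative: $w_\lambda$ is unstable.

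It remains to prove the bounds, and here the uniform $C(\overline\Omega)$-bound (iii) is the main obstacle. Testing \eqref{pr:w} with one fixed admissible direction $w_0$ first shows $\sup_{0<\lambda<\lambda_1(g)}c_\lambda<\infty$, whence $\int_{\partial\Omega}g\,w_\lambda^{p+1}=\frac{2(p+1)}{p-1}c_\lambda$ is bounded; but to bound $\|w_\lambda\|_{C(\overline\Omega)}$ uniformly I would run a Gidas--Spruck-type blow-up: if $\|w_{\lambda_n}\|_{C(\overline\Omega)}\to\infty$, rescale \eqref{pr:w} about a boundary maximum point of $w_{\lambda_n}$ at scale $\|w_{\lambda_n}\|_{C(\overline\Omega)}^{-(p-1)}$, use a doubling lemma to ensure the weight converges to a positive constant, and obtain in the limit a nontrivial bounded solution of $-\Delta v=0$ in a half-space with Neumann condition $\partial_\nu v=c\,v^p$, contradicting the Liouville theorem for this problem in the subcritical range \eqref{psubcr}. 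Given (iii): for (ii), if $\|w_{\lambda_n}\|_{C^{2}(\overline\Omega)}\not\to0$ for some $\lambda_n\nearrow\lambda_1(g)$, then by (iii) and Schauder estimates a subsequence converges in $C^2(\overline\Omega)$ to a nonnegative nontrivial solution of \eqref{pr:w} at $\lambda=\lambda_1(g)$, contradicting the non-existence just proved. For (iv), if $\|w_{\lambda_n}\|_{C(\overline\Omega)}\to0$ with $\lambda_n\in(0,\overline\lambda)$, then testing \eqref{pr:w} with $w_{\lambda_n}$ gives $\|w_{\lambda_n}\|_{H^1(\Omega)}\to0$; the normalisations $\widehat w_n=w_{\lambda_n}/\|w_{\lambda_n}\|_{H^1(\Omega)}$ solve a problem whose nonlinear term carries the vanishing coefficient $\|w_{\lambda_n}\|_{H^1(\Omega)}^{p-1}$, so by compactness of the trace $\widehat w_n$ converges in $H^1(\Omega)$ to a nonnegative nontrivial eigenfunction of \eqref{epro01} with eigenvalue $\lim\lambda_n\le\overline\lambda<\lambda_1(g)$; hence that eigenvalue is $0$ and $\widehat w_n\to c$ for a positive constant $c$. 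Dividing the identity $\int_{\partial\Omega}g\,w_{\lambda_n}^p=-\lambda_n\int_{\partial\Omega}g\,w_{\lambda_n}$ (which follows from $\int_{\partial\Omega}\partial_\nu w_{\lambda_n}=0$) by $\|w_{\lambda_n}\|_{H^1(\Omega)}$ then yields $\|w_{\lambda_n}\|_{H^1(\Omega)}^{p-1}\int_{\partial\Omega}g\,\widehat w_n^p=-\lambda_n\int_{\partial\Omega}g\,\widehat w_n$; for large $n$ the integrals $\int_{\partial\Omega}g\,\widehat w_n^p$ and $\int_{\partial\Omega}g\,\widehat w_n$ are negative, close to $c^p\int_{\partial\Omega}g$ and $c\int_{\partial\Omega}g$ respectively, while $\|w_{\lambda_n}\|_{H^1(\Omega)}^{p-1}>0$ and $\lambda_n>0$, so the left side is $<0$ and the right side $>0$ --- a contradiction. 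This proves $\inf_{\lambda\in(0,\overline\lambda)}\|w_\lambda\|_{C(\overline\Omega)}>0$.
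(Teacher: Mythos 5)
Your existence construction is the paper's own (Nehari manifold and fibering map, coercivity of $E_\lambda$ from Lemma \ref{lem:coer}, strong convergence of the minimizing sequence, vanishing Lagrange multiplier), and your proof of (i) by inserting $w_\lambda$ into the Rayleigh quotient of the linearization is a legitimate shortcut: since $w_\lambda\in\mathcal N_\lambda$ the numerator is $(1-p)E_\lambda(w_\lambda)<0$, which suffices for the theorem as stated (the paper's Proposition \ref{lem:inst} proves more, namely that \emph{every} positive solution is unstable, via the Brown--Hess identity). Assertions (ii) and (iv) are handled correctly once (iii) is known. There are, however, two genuine gaps: the uniform bound (iii) and the nonexistence for $\lambda\ge\lambda_1(g)$.

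For (iii), the Gidas--Spruck blow-up is the wrong tool at the generality of Theorem \ref{thm1}. The boundary maximum point of $w_{\lambda_n}$ does lie in $\Gamma_+=\{g>0\}$ (by Hopf's lemma), but under the sole hypothesis $\int_{\partial\Omega}g<0$ nothing prevents these points from accumulating on $\Gamma_0=\{g=0\}$, in which case the rescaled weight degenerates and no Liouville theorem for $\partial_\nu v=c\,v^p$ with $c>0$ is available; a doubling lemma only gives estimates in terms of $\operatorname{dist}(\cdot,\Gamma_0)$ and does not by itself exclude blow-up on $\Gamma_0$. This is precisely why the paper reserves the blow-up argument for Lemma \ref{lem:vaw}, where hypothesis \eqref{G} forces the limit point into the interior of $\Gamma_+$. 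Since (iii) concerns only the variational solution, the correct route is the one you begin and then abandon: the uniform bound on $c_\lambda$ makes $E_{\lambda_n}(w_n)=G(w_n)$ bounded; if $\|w_n\|_{H^1}\to\infty$, then $E_{\lambda_n}(\eta_n)\to0$ for $\eta_n=w_n/\|w_n\|_{H^1}$, which by Lemma \ref{lem:coer} forces $\lambda_n\to0$ or $\lambda_n\to\lambda_1(g)$ and $\eta_n\to\eta_0$ strongly, with $\eta_0$ a positive constant or a positive multiple of $\varphi_1(g)$; but then $G(\eta_0)=\lim E_{\lambda_n}(\eta_n)\|w_n\|_{H^1}^{1-p}=0$ contradicts $\int_{\partial\Omega}g<0$, respectively $G(\varphi_1(g))>0$. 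The $C(\overline\Omega)$ bound then follows from the $H^1$ bound by the subcritical bootstrap of \cite[Theorem 2.2]{Ro2005}, as in Proposition \ref{lem:bdd9mar21}.

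For the nonexistence, the combination you propose does not close. Testing against $\varphi_1(g)$ gives $\int_{\partial\Omega}g\,w^p\varphi_1=(\lambda_1(g)-\lambda)\int_{\partial\Omega}g\,w\varphi_1$, and Picone with $\varphi_1^2/w$ gives $\int_{\partial\Omega}g\,w^{p-1}\varphi_1^2\le(\lambda_1(g)-\lambda)\int_{\partial\Omega}g\,\varphi_1^2\le0$ for $\lambda\ge\lambda_1(g)$; but since $g$ changes sign, neither $\int_{\partial\Omega}g\,w^p\varphi_1$ nor $\int_{\partial\Omega}g\,w^{p-1}\varphi_1^2$ has a determined sign, so no contradiction results from these two relations alone. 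The paper's Proposition \ref{prop:non} avoids this by working with the $\lambda$-dependent principal eigenfunction $\Phi_1$ of $\sigma_1(\lambda)$ and a Picone-type identity built on $\left(\Phi_1/w\right)^p$ rather than $\left(\Phi_1/w\right)^2$: integrating $\left(\Phi_1/w\right)^p\bigl(\Delta\Phi_1\,w-\Phi_1\Delta w\bigr)$ and evaluating $\int_{\partial\Omega}g\,\Phi_1^{p+1}$ through $\int_\Omega(-\Delta\Phi_1)\Phi_1^{p}$ produces two quantities of opposite strict sign whenever $\sigma_1(\lambda)\le0$, i.e.\ whenever $\lambda\ge\lambda_1(g)$. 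You need either this identity or some other mechanism that exploits the sign of $\sigma_1(\lambda)$; the statement that the step ``requires some care'' is, as it stands, the missing proof.
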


We refer to \cite{Zh2003} for the existence of positive solutions to a similar type of nondivergence elliptic problem with indefinite superlinear boundary conditions. 
For nonlinear elliptic equations with definite superlinear boundary conditions, we refer to the survey article \cite{Ro2005}. We refer to \cite[Theorem 3]{Pf1999} for a similar existence result for positive solutions of the Neumann problem 
\begin{align} \label{Npr}
\begin{cases}
-\Delta w = \lambda m(x) w + m(x) w^{p} & \mbox{ in } \Omega, \\
\dfrac{\partial w}{\partial \nu}=0 & \mbox{ on } \partial\Omega.  
\end{cases}
\end{align}
Here, $p>1$ is subcritical, i.e., $p<\frac{N+2}{N-2}$ if $N>2$, 
$\lambda>0$ is a parameter, 
$m\in C^{\theta}(\overline{\Omega})$ changes sign such that 
$\int_\Omega m < 0$, and $(-\Delta-\lambda m(x), \frac{\partial}{\partial \nu})$ has a positive smallest eigenvalue. 
See also \cite{AT93, ALG98, BCDN94, BCDN95, CL97, Ou91, Te96} 
for the existence and related issues for positive solutions of the analogous indefinite superlinear elliptic equations with linear Dirichlet or Neumann boundary conditions.  

Next, we consider a special case of $g$ that is central to this paper. For a sign changing $g\in C^{1+\theta}(\partial\Omega)$, we set  
\begin{align} 
&\Gamma_{\pm}:= \{ x \in \partial\Omega : g(x)\gtrless0 \}, \label{defGam+} \\
&\Gamma_{0}:= \{ x\in \partial\Omega : g(x)=0 \}, \nonumber \\
&\Gamma_{-,0}:= \Gamma_{-}\cup \Gamma_{0}.  \nonumber 
\end{align}
It should be noted that $\Gamma_{+}$ is open in the relative topology of $\partial\Omega$. We then introduce the following condition for $\Gamma_{+}$ (Figure \ref{figPOmegat}):  
\begin{align} \label{G}
\mbox{$\Gamma_{+}$ is a compact submanifold 
of $\partial\Omega$ with dimension $N-1$.} 
\end{align}
For $g\in C^{1+\theta}(\partial\Omega)$ equipped with 
\eqref{G}, we define $g_{\delta}\in C^{1+\theta}(\partial\Omega)$ by the formula 
\begin{align} \label{gdel}
g_{\delta}:=g^{+}-\delta g^{-}, \quad 
\delta >\delta_{0}:=\frac{\int_{\partial\Omega}g^{+}}{\int_{\partial\Omega}g^{-}}.
\end{align}
We observe that $\int_{\partial\Omega}g_{\delta}<0$, and
$\int_{\partial\Omega}g_{\delta}\nearrow \int_{\partial\Omega}g_{\delta_{0}}=0$ as 
$\delta \searrow \delta_{0}$, and know that $\lambda_1(g_{\delta})\rightarrow 0$ as 
$\delta\searrow \delta_{0}$ (see \eqref{lam1del0}). 
%
		 \begin{figure}[!htb] 
        \centering 	  	   
	\includegraphics[scale=0.16]{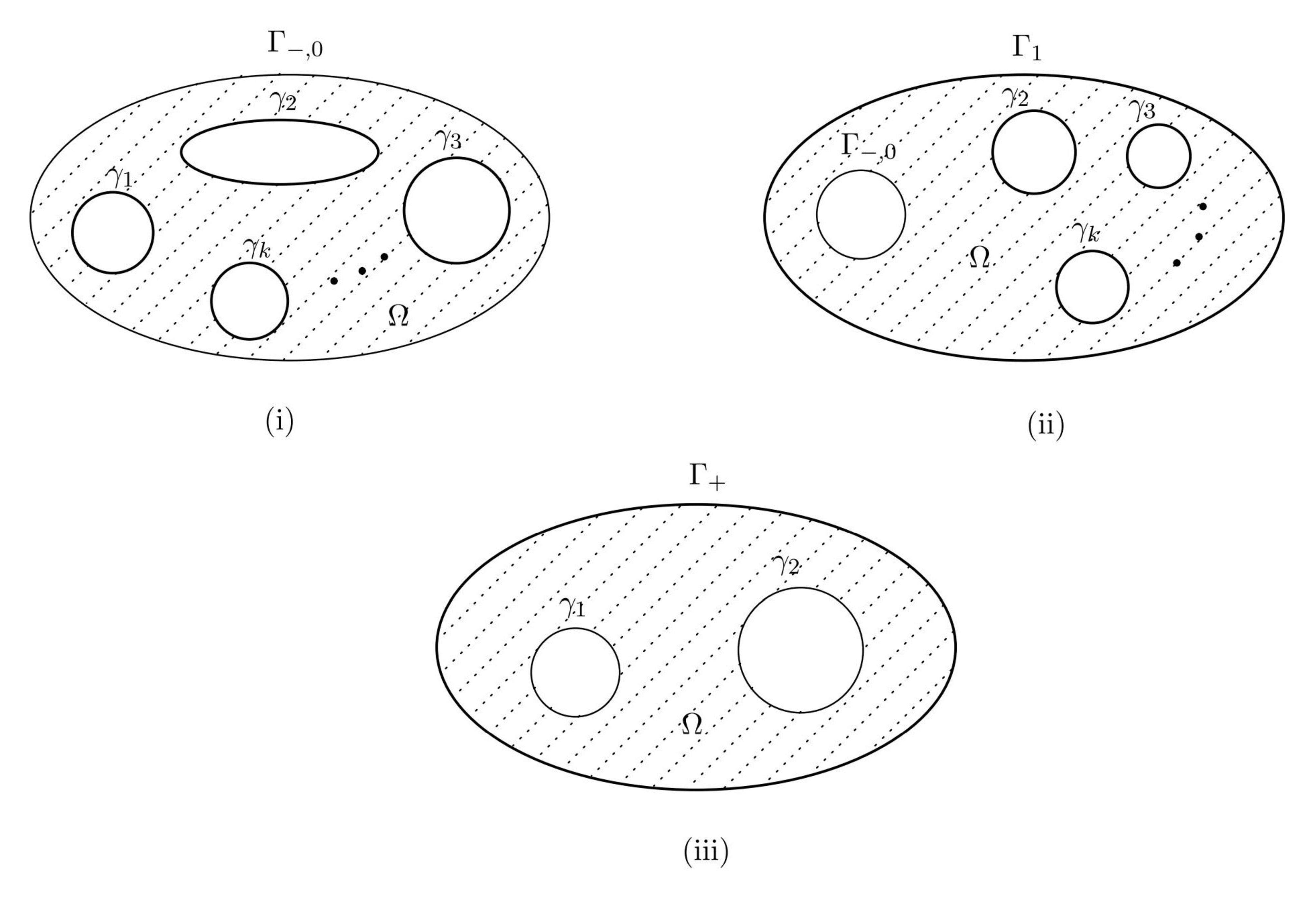} 
		  \caption{Situations admitting \eqref{G}: 
		  (i) $\Gamma_{+}=\bigcup_{j=1}^{k}\gamma_k$, 
		  (ii) $\Gamma_{+}=\Gamma_1 \cup \left( \bigcup_{j=2}^{k}\gamma_k \right)$, 
		  and 
		  (iii) $\Gamma_{-,0}=\gamma_1\cup \gamma_2$.} 
		\label{figPOmegat} 
		    \end{figure}   

We then present our main result for \eqref{pr:w}, where a precise description of the positive solution set $\{ (\lambda, w_{\lambda})\}$ of \eqref{pr:w} is given if $\delta>\delta_0$ is sufficiently close to $\delta_{0}$, i.e., if the situation of $g_{\delta}$ is assumed to be near the critical case $\int_{\partial\Omega}g_{\delta_0}=0$. 


\begin{theorem} \label{thm2}
Let $g_{\delta}\in C^{1+\theta}(\partial\Omega)$ be introduced by \eqref{gdel}. If $\delta > \delta_0$ is sufficiently close to $\delta_{0}$, then the positive solution set of \eqref{pr:w} with  $g=g_{\delta}$ for $\lambda \in (0,\lambda_1(g_{\delta}))$ is given as follows (Figure \ref{figcurvew}):
\begin{enumerate} \setlength{\itemsep}{0.2cm} 
\item \eqref{pr:w} with $g=g_{\delta}$ possesses a {\rm unique} positive solution $w_{\lambda}$ for every $\lambda\in (0,\lambda_1(g_{\delta}))$, and the positive solution set $\{ (\lambda,w_{\lambda}) :  \lambda\in (0,\lambda_1(g_{\delta})) \}$ is represented by a smooth curve, 
\item $w_{\lambda}\rightarrow 0$ in $C^2(\overline{\Omega})$ 
as $\lambda\nearrow \lambda_1(g_{\delta})$, i.e., bifurcation from $\{ (\lambda,0)\}$ at $(\lambda_1(g_{\delta}),0)$ occurs {\rm subcritically}, and  
\item $w_{\lambda}\rightarrow w_{0}$ in $C^2(\overline{\Omega})$ as $\lambda\searrow 0$ 
for some $w_{0}\in C^2(\overline{\Omega})$, where $w_{0}$ 
is a unique positive solution of \eqref{pr:w} with $g=g_{\delta}$ for $\lambda=0$ (actually the smooth positive solution curve is 
extended slightly to $\lambda<0$). 
\end{enumerate}
\end{theorem}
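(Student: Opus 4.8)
The plan is to study \eqref{pr:w} with $g=g_\delta$ as a perturbation of the degenerate limit $\delta=\delta_0$ (where $\int_{\partial\Omega}g_{\delta_0}=0$, hence $\lambda_1(g_{\delta_0})=0$) and to extract a uniform spectral gap that makes the implicit function theorem applicable along the whole branch. First I would set up the functional-analytic framework: view positive solutions of \eqref{pr:w} as zeros of a smooth map $F(\lambda,w)$ on $\mathbb{R}\times X$ for a suitable Banach space $X$ (e.g. a subspace of $C^{2+\alpha}(\overline\Omega)$, or the Sobolev/Hölder setting used for the Steklov-type operator), where the linearization at a positive solution $w_\lambda$ is governed by the eigenvalue problem for $(-\Delta,\frac{\partial}{\partial\nu}-\lambda g_\delta - p\,g_\delta w_\lambda^{p-1})$. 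Theorem~\ref{thm1} already supplies, for each $\delta>\delta_0$, a positive solution $w_\lambda$ for $\lambda\in(0,\lambda_1(g_\delta))$ that is unstable, bounded in $C^2(\overline\Omega)$ uniformly in $\lambda$ (item (iii)), bounded away from $0$ on compact subintervals (item (iv)), and collapsing to $0$ as $\lambda\nearrow\lambda_1(g_\delta)$ (item (ii)). The bifurcation statement (ii) is then essentially Crandall--Rabinowitz bifurcation from the simple eigenvalue $\lambda_1(g_\delta)$ of \eqref{epro01}, and the subcritical direction follows by computing the sign of the relevant bifurcation coefficient, which reduces to the sign of $\int_{\partial\Omega}g_\delta\,\varphi_1(g_\delta)^{p+1}$ — negative because $w_\lambda$ exists only for $\lambda<\lambda_1(g_\delta)$; one should also confirm uniqueness of the bifurcating branch locally near $(\lambda_1(g_\delta),0)$.

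The heart of the argument is the global uniqueness and the smooth parametrization in (i), together with the extension past $\lambda=0$ in (iii). The strategy I would use is a spectral/nondegeneracy estimate: show that for $\delta$ close enough to $\delta_0$, every positive solution $w$ of \eqref{pr:w} with $g=g_\delta$ and $\lambda\in(0,\lambda_1(g_\delta))$ is nondegenerate, i.e. $0$ is not an eigenvalue of the linearized operator $L_w:=(-\Delta,\frac{\partial}{\partial\nu}-\lambda g_\delta - p\,g_\delta w^{p-1})$. Granting this, the implicit function theorem applied at each such $(\lambda,w)$ shows the positive solution set is locally a smooth curve parametrized by $\lambda$; combined with the a priori bounds (iii)--(iv) of Theorem~\ref{thm1}, a connectedness/continuation argument along $(0,\lambda_1(g_\delta))$ — the branch cannot stop in the interior (nondegeneracy), cannot blow up (item (iii)), cannot touch $w\equiv0$ except at the bifurcation point (item (iv) plus the local structure there) — forces the entire positive solution set to be the single smooth curve emanating from $(\lambda_1(g_\delta),0)$, which gives uniqueness for each $\lambda$. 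For the endpoint $\lambda\searrow0$: the uniform bounds give $C^{2+\alpha'}$ compactness, so $w_\lambda\to w_0$ along subsequences with $w_0$ a nonnegative solution of \eqref{pr:w} at $\lambda=0$; item (iv) rules out $w_0\equiv0$, so $w_0$ is positive; nondegeneracy of $w_0$ (the same spectral estimate, now at $\lambda=0$) then both pins down the full limit and lets the implicit function theorem extend the curve smoothly to a neighbourhood of $\lambda=0$, in particular slightly to $\lambda<0$, and also yields uniqueness of $w_0$.

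It remains to prove the uniform nondegeneracy, which is the main obstacle. The idea is that as $\delta\searrow\delta_0$ we have $\lambda_1(g_\delta)\to0$ and, by the a priori bounds, the positive solutions $w_\lambda$ are small, so the linearized operator $L_w$ is a small perturbation of $(-\Delta,\frac{\partial}{\partial\nu})$, whose smallest eigenvalue is $0$ with constant eigenfunction. Two things must be controlled simultaneously: (a) the instability of $w_\lambda$ (from Theorem~\ref{thm1}(i)) means $L_w$ has a negative eigenvalue, so one must show precisely one negative eigenvalue and that the next eigenvalue is strictly positive — i.e. $0$ is not an eigenvalue; (b) this gap must be uniform in $\lambda\in(0,\lambda_1(g_\delta))$ up to and including $\lambda=0$. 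I would obtain this by a Morse-index/eigenvalue-perturbation analysis: characterize the first two eigenvalues of $L_w$ variationally, use the fibering-map structure behind Theorem~\ref{thm1} (the positive solution $w_\lambda$ arises as a minimizer on the Nehari-type manifold, which typically forces Morse index exactly one on the relevant cone and a one-dimensional negative subspace spanned by a direction transverse to the Nehari manifold), and then show by the smallness of $w_\lambda$ and of $\lambda$ — quantitatively, via an expansion of the eigenvalues in terms of $\varepsilon:=\delta-\delta_0$ — that the second eigenvalue stays bounded below by a positive constant. A delicate point is the possible near-degeneracy coming from $\lambda$ close to $\lambda_1(g_\delta)$, where $w_\lambda\to0$; there one should instead use the explicit Crandall--Rabinowitz expansion near the bifurcation point to check nondegeneracy directly, and glue this local picture to the global estimate on the complementary compact part of the interval. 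If the uniform second-eigenvalue bound fails for some sequence $\delta_n\searrow\delta_0$, a compactness/blow-up contradiction argument — rescaling and passing to a limit eigenfunction of $(-\Delta,\frac{\partial}{\partial\nu})$ that is forced to be a nonzero constant, contradicting orthogonality to the negative eigendirection — should close the gap.
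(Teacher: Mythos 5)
Your overall strategy---establish a uniform nondegeneracy of \emph{every} positive solution for $\delta$ near $\delta_{0}$, then combine the implicit function theorem with a continuation argument starting from the unique local bifurcation branch at $(\lambda_1(g_{\delta}),0)$ (the paper's Remark \ref{rem:uniq})---is exactly the paper's. The only formal difference is in the spectral step: the paper recasts nondegeneracy through the weighted eigenvalue problem \eqref{mu:epr}, in which $\mu=1$ is always a principal eigenvalue with eigenfunction $w$ and a zero eigenvalue of \eqref{epro27apr} corresponds to $\mu=p$, so that nondegeneracy reduces to $p<\mu_{2}^{+}$ and Proposition \ref{prop:2nd} shows $\mu_{2}^{+}\to\infty$ uniformly; you instead bound the second eigenvalue of the linearization directly from below by treating it as a small perturbation of the Neumann problem. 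Both versions of the contradiction argument (a limit eigenfunction that is a nonzero constant, incompatible with sign change or with orthogonality to the first eigenfunction) are sound, and since instability gives a strictly negative first eigenvalue for all positive solutions, a positive second eigenvalue indeed excludes a zero eigenvalue. (Minor slip: $\int_{\partial\Omega}g_{\delta}\varphi_1(g_{\delta})^{p+1}$ is \emph{positive}, not negative; your conclusion that the bifurcation is subcritical still follows from the nonexistence result for $\lambda\geq\lambda_1(g_{\delta})$.)

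There is, however, a genuine gap at the foundation of your perturbation argument: you assert that ``by the a priori bounds, the positive solutions $w_\lambda$ are small,'' citing Theorem \ref{thm1}(iii). That item only gives boundedness of the particular \emph{variational} solution for a \emph{fixed} weight; it yields neither a bound on \emph{all} positive solutions (which any uniqueness proof must control) nor smallness as $\delta\searrow\delta_{0}$. What your argument actually requires is the paper's Lemma \ref{lem:vaw}: the supremum $C_{\delta}$ of $\|w\|_{C(\overline{\Omega})}$ over all positive solutions and all $\lambda\in[0,\lambda_1(g_{\delta}))$ tends to $0$ as $\delta\searrow\delta_{0}$. Establishing this is a substantial step: the uniform bound $\sup_{\delta}C_{\delta}<\infty$ is proved by a blow-up argument at a maximum point, which necessarily lies in $\Gamma_{+}$, and it is precisely hypothesis \eqref{G} (built into the definition \eqref{gdel} and hence into the theorem's hypotheses) that forces the blow-up point to be interior to $\Gamma_{+}$ so that the rescaled limit is a half-space problem admitting no positive solution; the vanishing $C_{\delta}\to 0$ then follows from a weak-limit argument using $\int_{\partial\Omega}g_{\delta_{0}}=0$. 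Your proposal never engages with \eqref{G} or with this a priori estimate, and without it the smallness of the coefficient $p\,g_{\delta}w^{p-1}$ in the linearized boundary condition---hence the uniform lower bound on the second eigenvalue---is unjustified.
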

%
    \begin{figure}[!htb]
    \centering 
    \includegraphics[scale=0.16]{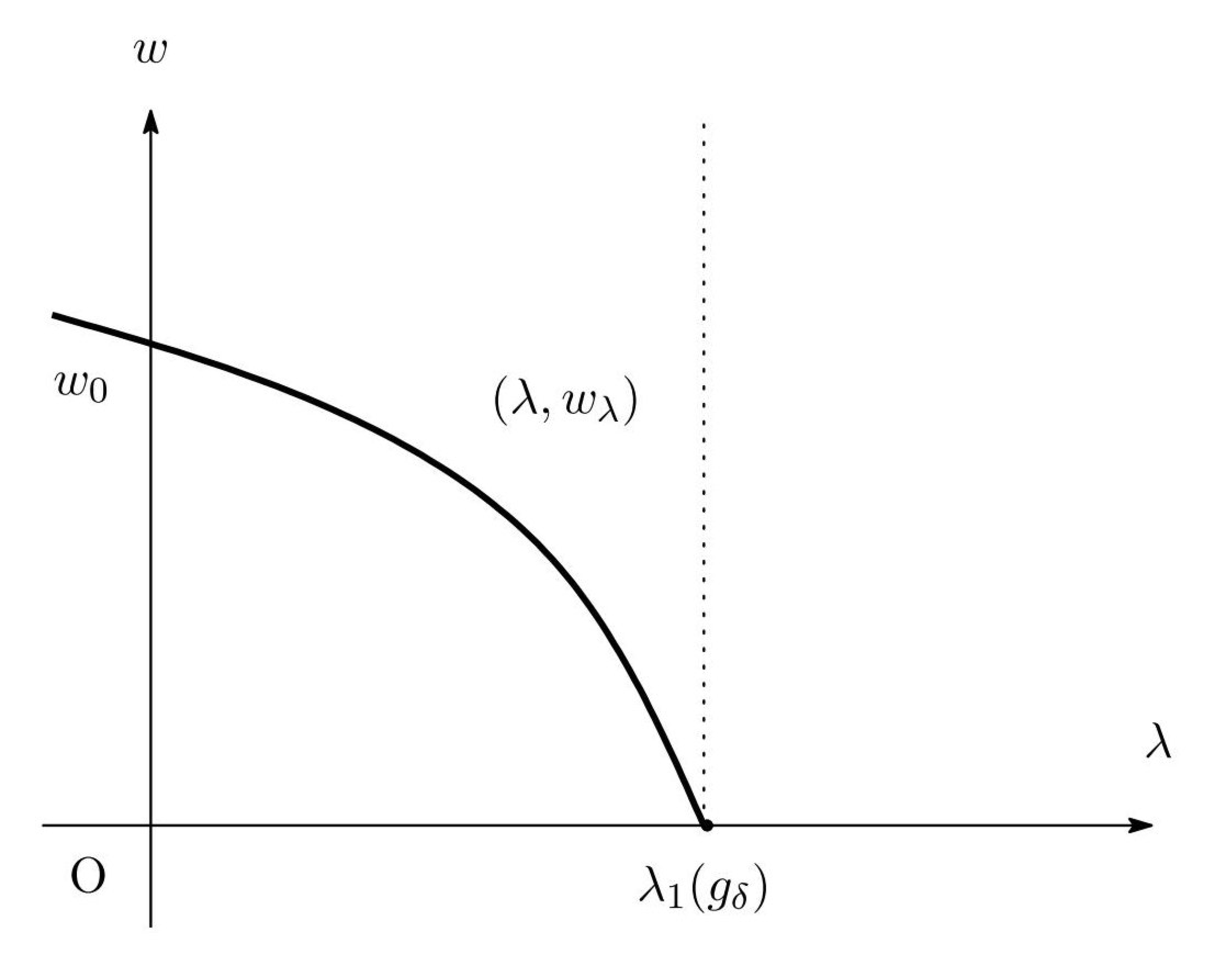} 
		  \caption{Positive solution set $\{ (\lambda,w_\lambda)\}$ of \eqref{pr:w} with $g=g_{\delta}$ as a smooth curve in the case that  $\delta$ is close to $\delta_{0}$.}
		\label{figcurvew} 
    \end{figure} 
%

As a byproduct of assertion (iii), we obtain that if $\delta>\delta_0$ is close to $\delta_{0}$, then the uniqueness of a positive solution holds for \eqref{pr:w} with $g=g_{\delta}$ and $\lambda=0$:
\begin{align*}
\begin{cases}
-\Delta w = 0 & \mbox{ in } \Omega, \\  
\dfrac{\partial w}{\partial \nu}=g(x)w^p & 
\mbox{ on } \partial\Omega. 
\end{cases}    
\end{align*}
We refer to \cite[Theorem 2]{Pf1999} for the existence result for \eqref{pr:w} with $\lambda=0$. 
We also refer to \cite{Bo11} for high multiplicity of positive solutions for  \eqref{Npr} with $\lambda=0$ when the negative part of $m$ is large (see \cite{BGH05} for the Dirichlet case).    

Going back to \eqref{sppr} by \eqref{change}, Theorems \ref{thm1} and \ref{thm2} are analogous to 
Theorems \ref{thm3} and \ref{thm4} in Sections \ref{sec:PT1.1} and \ref{sec:Pthm2}, respectively. 
Theorem \ref{thm4} provides us with a  
precise description of the positive solution set for \eqref{sppr}, 
as shown in Figure \ref{figcurve}.

To conclude the Introduction, it should be noted that our approach employed in the proofs of Theorem \ref{thm1} and \ref{thm2} remains valid for the following more general situation:
\begin{align} \label{pr:w:f}
\begin{cases}
-\Delta w = 0 & \mbox{ in } \Omega, \\  
\dfrac{\partial w}{\partial \nu}=\lambda g(x)w + f(x) w^p & 
\mbox{ on } \partial\Omega, 
\end{cases}
\end{align}
where $p>1$ is subcritical, $\lambda>0$, and $g,f\in C^{1+\theta}(\partial\Omega)$ both change signs (see Theorems \ref{thm:f1} and \ref{thm2:gf} in Sections \ref{sec:PT1.1} and \ref{sec:Pthm2}, respectively). In addition, similar results to Theorems \ref{thm1} and \ref{thm2} can be established for \eqref{Npr}. For high multiplicity of positive solutions to a similar type of \eqref{Npr}, we refer to \cite{Te18}.

The remainder of this paper is organized as follows. 
Section \ref{sec:PT1.1} is devoted to the proof of Theorem \ref{thm1}. 
In Subsection \ref{subsec:ex}, we prove the existence assertion by employing the variational approach based on the Nehari manifold and fibering map with \eqref{pr:w}. 
In Subsection \ref{subsec:none}, 
we use a Picone type identity to verify the nonexistence assertion. 
In Subsection \ref{subsec:insta}, we prove assertion (i) with 
the aid of an idea from \cite{BH90}. 
In Subsection \ref{subsec:BA}, we prove assertions 
(ii) to (iv) using a variational approach.

Section \ref{sec:Pthm2} is devoted to the proof of Theorem \ref{thm2}, where a crucial step is the establishment of Proposition \ref{prop:2nd}. We conduct spectral analysis for the linearized eigenvalue problem at each positive solution of \eqref{pr:w}.

In Section 4, we present an application of Theorems \ref{thm1} and \ref{thm2} to the logistic boundary condition. Our main results in this section are Theorems \ref{thm:A1}, \ref{thm:A2}, and \ref{thm:A3}. In Subsection \ref{subsec:1D}, 
we reduce our analysis to the one-dimensional space case, where a numerical observation is added.


\section{Proof of Theorem \ref{thm1}} 

\label{sec:PT1.1}

In this section, we prove Theorem \ref{thm1}. 

\subsection{Existence} 

\label{subsec:ex}

This subsection assumes that $\int_{\partial\Omega}g<0$. We define the functional associated with \eqref{pr:w}
\begin{align*}
J_\lambda (w) := 
\frac{1}{2}E_\lambda (w) - \frac{1}{p+1}G(w), \quad 
w \in H^1(\Omega), 
\end{align*}
where 
\begin{align*}
E_\lambda (w) := \int_\Omega |\nabla w|^2 - \lambda 
\int_{\partial\Omega} g(x) w^2, \quad 
G(w) := \int_{\partial\Omega} g(x)|w|^{p+1}.  
\end{align*}
Using the divergence theorem, it is easy to deduce that 
\begin{align*} 
\int_{\partial\Omega}g(x)\varphi_1(g)^{2}>0 \quad \mbox{ and } 
\quad G(\varphi_1(g))>0, 
\end{align*}
which are used repeatedly in the following. 

The next lemma implies that $E_\lambda(\cdot)$ is coercive in $H^1(\Omega)$ for $\lambda\in (0,\lambda_1(g))$. Here, $\| \cdot\|$ denotes the usual norm of $H^1(\Omega)$. 
\begin{lem} \label{lem:coer} 
Let $I \subset (0, \lambda_1(g))$ be a compact interval.  Then, 
there exists a constant $c_I > 0$ such that 
\begin{align*}
E_{\lambda} (w) \geq c_I \Vert w \Vert^2, \quad w \in 
H^1(\Omega) \ \mbox{ and } \ \lambda \in I. 
\end{align*}
\end{lem}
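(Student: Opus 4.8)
The plan is to combine the positive‑definiteness of $E_\lambda$ on $H^1(\Omega)$ for $\lambda<\lambda_1(g)$ with a weak‑compactness argument, which is what upgrades positive‑definiteness to a coercivity estimate that is \emph{uniform} over the compact interval $I$.

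\textbf{Step 1: $E_\lambda$ is positive definite for each $\lambda\in(0,\lambda_1(g))$.} Since $\int_{\partial\Omega}g<0$, I would use the variational characterization of the positive principal eigenvalue of \eqref{epro01} (\cite{Um2006}),
\[
\lambda_1(g)=\inf\left\{\frac{\int_\Omega|\nabla w|^2}{\int_{\partial\Omega}g(x)w^2}\ :\ w\in H^1(\Omega),\ \int_{\partial\Omega}g(x)w^2>0\right\},
\]
the infimum being attained; a minimizer may be taken nonnegative (the quotient is invariant under $w\mapsto|w|$), hence is a positive principal eigenfunction of \eqref{epro01}, and since the constant eigenfunction associated with the principal eigenvalue $0$ violates the constraint, the infimum value is exactly $\lambda_1(g)$. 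Consequently, for $\lambda\in(0,\lambda_1(g))$ and $w\in H^1(\Omega)\setminus\{0\}$: if $\int_{\partial\Omega}gw^2\le0$, then $E_\lambda(w)\ge\int_\Omega|\nabla w|^2\ge0$, and equality would force $w$ to be a constant with $w^2\int_{\partial\Omega}g=0$, i.e. $w\equiv0$; while if $\int_{\partial\Omega}gw^2>0$, then $E_\lambda(w)\ge(\lambda_1(g)-\lambda)\int_{\partial\Omega}gw^2>0$. In either case $E_\lambda(w)>0$.

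\textbf{Step 2: uniform coercivity on $I$.} Arguing by contradiction, suppose no such $c_I$ exists. Then there are $\lambda_n\in I$ and $w_n\in H^1(\Omega)$ with $\|w_n\|=1$ and $E_{\lambda_n}(w_n)<1/n$. Passing to a subsequence, $\lambda_n\to\lambda_*\in I$ (compactness of $I$), $w_n\rightharpoonup w$ weakly in $H^1(\Omega)$, and, by compactness of the embeddings $H^1(\Omega)\hookrightarrow L^2(\Omega)$ and $H^1(\Omega)\hookrightarrow L^2(\partial\Omega)$, $w_n\to w$ strongly in $L^2(\Omega)$ and in $L^2(\partial\Omega)$. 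Then $\lambda_n\int_{\partial\Omega}gw_n^2\to\lambda_*\int_{\partial\Omega}gw^2$, so by weak lower semicontinuity of $w\mapsto\int_\Omega|\nabla w|^2$,
\[
E_{\lambda_*}(w)\le\liminf_{n\to\infty}E_{\lambda_n}(w_n)\le0.
\]
By Step 1 and $\lambda_*\in(0,\lambda_1(g))$ this forces $w\equiv0$; then $\lambda_n\int_{\partial\Omega}gw_n^2\to0$, and from $0\le\int_\Omega|\nabla w_n|^2=E_{\lambda_n}(w_n)+\lambda_n\int_{\partial\Omega}gw_n^2$ we obtain $\int_\Omega|\nabla w_n|^2\to0$; combined with $\|w_n\|_{L^2(\Omega)}\to\|w\|_{L^2(\Omega)}=0$ this yields $\|w_n\|\to0$, contradicting $\|w_n\|=1$.

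The trace theorem, weak lower semicontinuity, and the direct‑method justification of the attainment used in Step 1 are routine, and the passage from Step 1 to Step 2 is the classical device for turning a trivial‑kernel statement into coercivity. The genuinely delicate point, forced by the sign change of $g$, is Step 1: because $\lambda\int_{\partial\Omega}gw^2$ is not sign‑definite it cannot be absorbed by a naive trace inequality, so one must split according to the sign of $\int_{\partial\Omega}gw^2$ and appeal to the sharp characterization of $\lambda_1(g)$ (equivalently, to the fact from \cite{Um2006} that $0$ and $\lambda_1(g)$ are the only principal eigenvalues of \eqref{epro01}).
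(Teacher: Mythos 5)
Your proof is correct. The paper states Lemma \ref{lem:coer} without proof (treating it as a standard consequence of the spectral facts for \eqref{epro01} quoted from \cite{Um2006}), and your two-step argument --- the sharp variational characterization of $\lambda_1(g)$ giving positive definiteness of $E_\lambda$ for each fixed $\lambda\in(0,\lambda_1(g))$, followed by the normalization--weak-compactness contradiction to make the constant uniform over the compact interval $I$ --- is exactly the standard way to supply the missing details. You correctly identify the one genuinely delicate point (the sign split on $\int_{\partial\Omega}g w^2$, which is where $\int_{\partial\Omega}g<0$ and the identification of $\lambda_1(g)$ as the unique positive principal eigenvalue are used), and the steps you defer as routine (attainment of the Rayleigh infimum, compactness of the trace into $L^2(\partial\Omega)$, weak lower semicontinuity) are indeed routine.
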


For our procedure, 
we use the fibering map for $J_\lambda$, 
$\lambda\in (0, \lambda_1(g))$, which is introduced as follows: 
for $w\neq 0$, we set
\begin{align*}
j_w (t) := J_\lambda (tw) 
= \frac{t^2}{2} E_\lambda (w) - \frac{t^{p+1}}{p+1} G(w), \quad t>0. 
\end{align*}
The associated Nehari manifold is also introduced: 
\begin{align*}
\mathcal{N}_{\lambda} := 
\{ w \in H^1(\Omega)\setminus \{ 0\} : j_{w}^{\prime}(1) = 0 \}
=\{ w\in H^1(\Omega)\setminus \{ 0\}: E_\lambda(w)=G(w) \}. 
\end{align*}
We split $\mathcal{N}_{\lambda}$ into three parts as follows: 
\begin{align*}
\mathcal{N}_{\lambda}^{\pm} &:= \{ w \in \mathcal{N}_{\lambda}:  j_{w}^{\prime\prime}(1)\gtrless 0 \} \\ 
& = \{ w \in H^1(\Omega)\setminus \{ 0\}: E_{\lambda}(w)=G(w), 
E_{\lambda}(w)\gtrless pG(w) \}, \\ 
\mathcal{N}_{\lambda}^{0} &:= 
\{ w \in \mathcal{N}_{\lambda}: j_{w}^{\prime\prime}(1) = 0 \} \\ 
&=\{  w \in H^1(\Omega)\setminus \{ 0\}:   E_{\lambda}(w)=G(w), \ 
E_{\lambda}(w)=pG(w) \}. 
\end{align*}
However, from Lemma \ref{lem:coer}, we deduce that if 
$\lambda\in (0, \lambda_1(g))$, then for any $w\in \mathcal{N}_{\lambda}$, we obtain  
\begin{align*}
E_{\lambda}(w)-pG(w) = (1-p)E_{\lambda}(w)<0. 
\end{align*}
The next lemma is thus proved.
\begin{lem}
$\mathcal{N}_{\lambda}=\mathcal{N}_{\lambda}^{-}\neq 
\emptyset$ and 
$\mathcal{N}_{\lambda}^{+}\cup \mathcal{N}_{\lambda}^{0}=\emptyset$ for $\lambda\in (0,\lambda_1(g))$. 
\end{lem}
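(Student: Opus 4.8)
The plan is to show that the Nehari manifold $\mathcal{N}_\lambda$ coincides with $\mathcal{N}_\lambda^-$ and that both splitting pieces $\mathcal{N}_\lambda^+$ and $\mathcal{N}_\lambda^0$ are empty, and finally that $\mathcal{N}_\lambda^- \neq \emptyset$, for every $\lambda \in (0,\lambda_1(g))$.

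First I would observe that the characterizations of $\mathcal{N}_\lambda^{\pm}$ and $\mathcal{N}_\lambda^0$ given just above, combined with the identity that holds for every $w \in \mathcal{N}_\lambda$ (so that $E_\lambda(w) = G(w)$), reduce the matter to the sign of
\begin{align*}
E_\lambda(w) - pG(w) = E_\lambda(w) - pE_\lambda(w) = (1-p)E_\lambda(w).
\end{align*}
By Lemma \ref{lem:coer}, since $\lambda \in (0,\lambda_1(g))$ lies in some compact subinterval $I \subset (0,\lambda_1(g))$, we have $E_\lambda(w) \geq c_I \|w\|^2 > 0$ for $w \neq 0$; as $p > 1$ we get $(1-p)E_\lambda(w) < 0$, i.e. $j_w''(1) < 0$. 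Hence every $w \in \mathcal{N}_\lambda$ lies in $\mathcal{N}_\lambda^-$, and no $w$ can satisfy $j_w''(1) \geq 0$, so $\mathcal{N}_\lambda^+ = \mathcal{N}_\lambda^0 = \emptyset$ and $\mathcal{N}_\lambda = \mathcal{N}_\lambda^-$.

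It remains to check that $\mathcal{N}_\lambda$ is nonempty. For this I would use the positive eigenfunction $\varphi_1(g)$: the fibering map $j_{\varphi_1(g)}(t) = \tfrac{t^2}{2}E_\lambda(\varphi_1(g)) - \tfrac{t^{p+1}}{p+1}G(\varphi_1(g))$ has $E_\lambda(\varphi_1(g)) > 0$ by Lemma \ref{lem:coer} and $G(\varphi_1(g)) > 0$ by the divergence-theorem computation recorded in the text. Thus $j_{\varphi_1(g)}'(t) = t\,E_\lambda(\varphi_1(g)) - t^p G(\varphi_1(g)) = t\bigl(E_\lambda(\varphi_1(g)) - t^{p-1}G(\varphi_1(g))\bigr)$ vanishes at the unique positive value $t_* = \bigl(E_\lambda(\varphi_1(g))/G(\varphi_1(g))\bigr)^{1/(p-1)}$, and then $t_*\varphi_1(g) \in \mathcal{N}_\lambda$, so $\mathcal{N}_\lambda \neq \emptyset$.

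There is no real obstacle here — the lemma is essentially a bookkeeping consequence of Lemma \ref{lem:coer} and the strict convexity/concavity structure of the fibering map — so the only point requiring a little care is making sure the compact-interval hypothesis of Lemma \ref{lem:coer} is invoked correctly (any single $\lambda \in (0,\lambda_1(g))$ is contained in such an interval) and that the nonemptiness argument actually produces a point, which the scaling of $\varphi_1(g)$ does.
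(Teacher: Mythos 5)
Your proposal is correct and follows essentially the same route as the paper: the identity $E_\lambda(w)-pG(w)=(1-p)E_\lambda(w)<0$ via Lemma \ref{lem:coer} handles the decomposition, and scaling the positive eigenfunction $\varphi_1(g)$ (using $E_\lambda(\varphi_1(g))>0$ and $G(\varphi_1(g))>0$) gives nonemptiness. The only cosmetic difference is that you solve explicitly for the critical value $t_*$ of the fibering map where the paper appeals to the unique global maximum point; the content is identical.
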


\begin{proof}
It remains to prove $\mathcal{N}_{\lambda}^{-}\neq \emptyset$. 
We use a positive eigenfunction $\varphi_1=\varphi_1(g)$ associated with $\lambda_1(g)$ to verify the assertion. We infer from Lemma \ref{lem:coer} that $E_{\lambda}(\varphi_1)>0$. 
Since $G(\varphi_1)>0$, the fibering map $j_{\varphi_1}(\cdot)$ has a unique global maximum point $t_0>0$. This implies that 
$j_{\varphi_1}^{\prime}(t_0) = 0>j_{\varphi_1}^{\prime\prime}(t_0)$. Thus, $t_{0}\varphi_1 \in \mathcal{N}_{\lambda}^{-}$. 
\end{proof}

The next lemma asserts that $J_{\lambda}(\cdot)$ is positive in $\mathcal{N}_{\lambda}^{-}$ for each $\lambda\in (0, \lambda_1(g))$.  
\begin{lem} \label{lem:Jposi}
Let $\lambda \in (0,\lambda_1(g))$. Then, 
$J_{\lambda}(w)>0$ for all $w \in \mathcal{N}_{\lambda}^{-}$. 
\end{lem}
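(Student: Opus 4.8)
The plan is to exploit the fact that every element of $\mathcal{N}_{\lambda}$ lies in $\mathcal{N}_{\lambda}^{-}$, so that the fibering map $j_w$ attains its value $J_\lambda(w) = j_w(1)$ at its unique \emph{global maximum} on $(0,\infty)$. First I would fix $\lambda \in (0,\lambda_1(g))$ and $w \in \mathcal{N}_{\lambda}^{-} = \mathcal{N}_\lambda$. By Lemma~\ref{lem:coer} we have $E_\lambda(w) \geq c_I\|w\|^2 > 0$, and since $w \in \mathcal{N}_\lambda$ forces $G(w) = E_\lambda(w) > 0$, the cubic-type function $j_w(t) = \frac{t^2}{2}E_\lambda(w) - \frac{t^{p+1}}{p+1}G(w)$ is strictly positive for small $t>0$, tends to $-\infty$ as $t \to \infty$, and has a unique critical point $t_0 > 0$ which is its global maximum; moreover the Nehari condition $j_w'(1) = 0$ together with $j_w'$ having a unique zero identifies $t_0 = 1$.

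The key computation is then to evaluate $j_w(1)$ using $E_\lambda(w) = G(w)$. Substituting,
\begin{align*}
J_\lambda(w) = j_w(1) = \frac{1}{2}E_\lambda(w) - \frac{1}{p+1}G(w) = \left( \frac{1}{2} - \frac{1}{p+1} \right) E_\lambda(w) = \frac{p-1}{2(p+1)} E_\lambda(w).
\end{align*}
Since $p > 1$ the coefficient $\frac{p-1}{2(p+1)}$ is strictly positive, and since $E_\lambda(w) \geq c_I\|w\|^2 > 0$ (because $w \neq 0$), we conclude $J_\lambda(w) > 0$. This argument in fact gives the quantitative bound $J_\lambda(w) \geq \frac{(p-1)c_I}{2(p+1)}\|w\|^2$ on any compact interval $I \ni \lambda$, which may be useful later for the variational existence argument.

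Honestly, there is no real obstacle here: the statement is an immediate consequence of the Nehari identity $E_\lambda(w) = G(w)$ combined with the coercivity Lemma~\ref{lem:coer}, and the only mild care needed is to note that one does not even need to invoke the fibering map — the direct substitution suffices. If one did want the fibering-map picture (which clarifies why $J_\lambda$ restricted to $\mathcal{N}_\lambda^-$ is the natural object to minimize), the single point to verify is that $t_0 = 1$ for $w \in \mathcal{N}_\lambda$, which follows because $j_w'(t) = tE_\lambda(w) - t^p G(w) = t(E_\lambda(w) - t^{p-1}G(w))$ vanishes on $(0,\infty)$ only at $t = (E_\lambda(w)/G(w))^{1/(p-1)} = 1$.
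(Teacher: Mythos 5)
Your proposal is correct and rests on the same two ingredients as the paper's proof, namely the coercivity Lemma~\ref{lem:coer} giving $E_\lambda(w)>0$ and the Nehari identity $E_\lambda(w)=G(w)$. The only cosmetic difference is that you conclude by the direct substitution $J_\lambda(w)=\left(\frac{1}{2}-\frac{1}{p+1}\right)E_\lambda(w)>0$ (which also yields the quantitative lower bound you note, and which the paper itself uses later in Lemmas~\ref{lem:wbdd} and~\ref{lem:v0not0}), whereas the paper phrases the last step as $J_\lambda(w)=j_w(1)=\max_{t>0}j_w(t)>0$.
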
 

\begin{proof}
Let $\lambda\in (0,\lambda_1(g))$, and $w\in \mathcal{N}_{\lambda}^{-}$. 
Then, $E_\lambda (w) > 0$ from Lemma  \ref{lem:coer}, which implies that $G(w)>0$. Hence, $J_{\lambda}(w)=j_{w}(1)=\max_{t>0}j_{w}(t)>0$. 
\end{proof}

We then prove the existence of a minimizer in $\mathcal{N}_{\lambda}^{-}$ for $J_{\lambda}$. Let $\lambda \in (0,\lambda_1(g))$. From Lemma \ref{lem:Jposi}, we can say 
$\gamma := \inf_{\mathcal{N}_{\lambda}^{-}} J_{\lambda}(w)\geq 0$, and we take a minimizing sequence $\{ w_n\}\subset  \mathcal{N}_{\lambda}^{-}$ 
such that $J_{\lambda}(w_n) \searrow \gamma$. We then obtain the following lemma.  
\begin{lem} \label{lem:wbdd}
$\sup_{n\geq1} \| w_n \|<\infty$. 
\end{lem}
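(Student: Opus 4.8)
\textbf{Proof proposal for Lemma \ref{lem:wbdd}.}

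The plan is to exploit the fact that the minimizing sequence lies on the Nehari manifold $\mathcal{N}_\lambda^- = \mathcal{N}_\lambda$, so that $E_\lambda(w_n) = G(w_n)$ for every $n$. Substituting this identity into the functional gives a clean expression for $J_\lambda$ restricted to $\mathcal{N}_\lambda$: namely
\begin{align*}
J_\lambda(w_n) = \frac12 E_\lambda(w_n) - \frac{1}{p+1} G(w_n) = \left( \frac12 - \frac{1}{p+1} \right) E_\lambda(w_n) = \frac{p-1}{2(p+1)} E_\lambda(w_n).
\end{align*}
Since $J_\lambda(w_n) \searrow \gamma \geq 0$, the sequence $\{J_\lambda(w_n)\}$ is bounded, and therefore $\{E_\lambda(w_n)\}$ is bounded above by some constant.

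Next I would invoke the coercivity estimate of Lemma \ref{lem:coer}: because $\lambda \in (0, \lambda_1(g))$ lies in some compact subinterval $I \subset (0, \lambda_1(g))$ (e.g.\ $I = \{\lambda\}$, or any closed interval containing $\lambda$), there is a constant $c_I > 0$ with $E_\lambda(w) \geq c_I \|w\|^2$ for all $w \in H^1(\Omega)$. Applying this to $w_n$ yields
\begin{align*}
c_I \|w_n\|^2 \leq E_\lambda(w_n) = \frac{2(p+1)}{p-1} J_\lambda(w_n) \leq \frac{2(p+1)}{p-1}\,\sup_{k \geq 1} J_\lambda(w_k) < \infty,
\end{align*}
where the supremum is finite because $J_\lambda(w_k) \searrow \gamma$ is a convergent (hence bounded) sequence. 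Dividing by $c_I$ gives a uniform bound on $\|w_n\|^2$, which is exactly the claim $\sup_{n \geq 1} \|w_n\| < \infty$.

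There is essentially no obstacle here: the argument is a direct computation chaining the Nehari identity, the value of the exponents' combination $\frac12 - \frac1{p+1} = \frac{p-1}{2(p+1)} > 0$ (which is positive precisely because $p > 1$), and the coercivity lemma already in hand. The only point requiring a word of care is making sure the constant $c_I$ is legitimately available for the fixed $\lambda$ under consideration — but this is immediate by taking any compact interval $I \subset (0,\lambda_1(g))$ with $\lambda \in I$. Thus the boundedness of the minimizing sequence follows, which is the needed ingredient to later extract a weakly convergent subsequence and produce a minimizer in $\overline{\mathcal{N}_\lambda^-}$.
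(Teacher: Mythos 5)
Your proof is correct and uses exactly the same two ingredients as the paper: the Nehari identity $E_\lambda(w_n)=G(w_n)$ to write $J_\lambda(w_n)=\bigl(\tfrac12-\tfrac1{p+1}\bigr)E_\lambda(w_n)$, and the coercivity of Lemma \ref{lem:coer}. The only cosmetic difference is that you conclude directly, whereas the paper normalizes $\eta_n=w_n/\|w_n\|$ and argues by contradiction; your version is, if anything, slightly cleaner.
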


\begin{proof}
By contradiction, 
we assume $\Vert w_n \Vert \to \infty$. Say $\eta_{n} = \frac{w_{n}}{\Vert w_{n} \Vert}$, and $\Vert \eta_n \Vert = 1$. 
Since $J_{\lambda}(w_{n})$ is bounded by the choice of $w_n$, we infer the existence of $C>0$ such that 
\begin{align*}
\left( \frac{1}{2} - \frac{1}{p+1} \right) E_{\lambda}(w_{n}) 
= J_{\lambda}(w_n)\leq C. 
\end{align*}
This implies that $\varlimsup_{n\to\infty} E_{\lambda}(\eta_n)\leq0$. Thus, by Lemma \ref{lem:coer}, $\eta_n \to 0$ in $H^1(\Omega)$, which is a contradiction. \end{proof}

From Lemma \ref{lem:wbdd}, we may infer that 
for some $w_{0}\in H^1(\Omega)$, $w_{n}\rightharpoonup w_{0}$, and $w_{n}\rightarrow w_{0}$ in $L^2(\Omega)$ and $L^{p+1}(\partial\Omega)$. We then have the following. 
\begin{lem} \label{lem:v0not0} 
\strut 
\begin{enumerate} \setlength{\itemsep}{0.2cm} 
\item $E_{\lambda}(w_{0})>0$ and $G(w_{0})>0$. 
\item $w_{n} \to w_0$ in $H^1(\Omega)$. 
\end{enumerate}
\end{lem}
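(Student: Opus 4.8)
The plan is to prove the two assertions in tandem, since the positivity of $G(w_0)$ will be needed to argue $w_0 \neq 0$, which in turn is needed to strengthen weak to strong convergence. First I would recall that each $w_n \in \mathcal{N}_\lambda^-$ satisfies $E_\lambda(w_n) = G(w_n)$ and, by Lemma \ref{lem:coer}, $E_\lambda(w_n) \geq c_I \|w_n\|^2 > 0$; hence $G(w_n) = E_\lambda(w_n) > 0$ for all $n$. Moreover, by Lemma \ref{lem:Jposi} and the choice of the minimizing sequence, $J_\lambda(w_n) \to \gamma \geq 0$, and since $J_\lambda(w_n) = \left(\frac{1}{2} - \frac{1}{p+1}\right) E_\lambda(w_n)$ on $\mathcal{N}_\lambda$, the sequence $E_\lambda(w_n)$ converges to $\frac{2(p+1)}{p-1}\gamma$.

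Next I would show $\gamma > 0$, equivalently that $G(w_0) > 0$. Suppose instead $\gamma = 0$. Then $E_\lambda(w_n) \to 0$, so by Lemma \ref{lem:coer} $\|w_n\| \to 0$; but then the compact trace embedding $H^1(\Omega) \hookrightarrow L^{p+1}(\partial\Omega)$ gives $G(w_n) \to 0$ as well. On the other hand, writing the constraint $E_\lambda(w_n) = G(w_n)$ in terms of $\eta_n = w_n/\|w_n\|$ yields $E_\lambda(\eta_n) = \|w_n\|^{p-1} G(\eta_n)$; since $\|\eta_n\| = 1$ and $G(\eta_n)$ is bounded, the right-hand side tends to $0$, contradicting $E_\lambda(\eta_n) \geq c_I$. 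Hence $\gamma > 0$. Using $G(w_n) \to \frac{2(p+1)}{p-1}\gamma > 0$ and the strong convergence $w_n \to w_0$ in $L^{p+1}(\partial\Omega)$, we get $G(w_0) = \lim G(w_n) > 0$; in particular $w_0 \not\equiv 0$. Then $E_\lambda(w_0) > 0$ follows from Lemma \ref{lem:coer} applied to $w_0 \neq 0$. This proves (i).

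For (ii), I would use the standard lower-semicontinuity-plus-constraint argument. Weak lower semicontinuity of $w \mapsto \int_\Omega |\nabla w|^2$ together with $w_n \to w_0$ strongly in $L^2(\Omega)$ and $L^{p+1}(\partial\Omega)$ gives $E_\lambda(w_0) \leq \liminf E_\lambda(w_n)$ and $G(w_0) = \lim G(w_n)$. Suppose convergence in $H^1(\Omega)$ were to fail; then the inequality is strict, so $E_\lambda(w_0) < G(w_0)$. Since $E_\lambda(w_0) > 0$ and $G(w_0) > 0$, the fibering map $j_{w_0}$ has a unique maximum at some $t_0 > 0$ with $t_0 w_0 \in \mathcal{N}_\lambda^- = \mathcal{N}_\lambda$, and the condition $j_{w_0}'(t_0) = 0$ forces $t_0^{p-1} = E_\lambda(w_0)/G(w_0) < 1$, i.e. $t_0 \in (0,1)$. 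Then
\begin{align*}
J_\lambda(t_0 w_0) = \left(\frac{1}{2} - \frac{1}{p+1}\right) t_0^2 E_\lambda(w_0)
< \left(\frac{1}{2} - \frac{1}{p+1}\right) \liminf_{n\to\infty} E_\lambda(w_n) = \gamma,
\end{align*}
contradicting $t_0 w_0 \in \mathcal{N}_\lambda^-$ and the definition of $\gamma$. Hence $E_\lambda(w_0) = \lim E_\lambda(w_n)$, which combined with $w_n \to w_0$ in $L^2(\Omega)$ forces $\int_\Omega |\nabla w_n|^2 \to \int_\Omega |\nabla w_0|^2$; together with weak convergence of the gradients this yields $w_n \to w_0$ strongly in $H^1(\Omega)$.

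The main obstacle I anticipate is not any single estimate but getting the bookkeeping right so the pieces interlock: one must extract $\gamma > 0$ (hence $w_0 \neq 0$) before the lower-semicontinuity argument can be run, and one must be careful that $t_0 w_0$ genuinely lies in $\mathcal{N}_\lambda^- = \mathcal{N}_\lambda$ (using the earlier lemma that $\mathcal{N}_\lambda = \mathcal{N}_\lambda^-$ on $(0,\lambda_1(g))$, so there is no separate $t_0 w_0 \in \mathcal{N}_\lambda^0$ or $\mathcal{N}_\lambda^+$ case to rule out). The compactness of the trace embedding $H^1(\Omega) \hookrightarrow L^{p+1}(\partial\Omega)$, which relies on the subcriticality hypothesis \eqref{psubcr}, is the analytic input that makes $G$ weakly continuous and drives both parts.
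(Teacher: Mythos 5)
Your proof is correct and follows essentially the same route as the paper's: the same contradiction argument with $\eta_n = w_n/\|w_n\|$ to show $\gamma>0$ (hence $G(w_0)>0$ and $w_0\neq 0$), and the same weak-lower-semicontinuity/fibering-map contradiction for strong convergence. Your explicit computation $t_0^{p-1}=E_\lambda(w_0)/G(w_0)<1$ is just a more quantitative phrasing of the paper's inequality $j_{w_0}(t_0)<\lim_{n\to\infty} j_{w_n}(t_0)\leq \lim_{n\to\infty} j_{w_n}(1)=\gamma$.
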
 

\begin{proof}
We verify assertion (i). 
We claim that $\gamma > 0$. Assume by contradiction that 
$J_{\lambda}(w_{n})\rightarrow 0$, and then, from $w_{n}\in \mathcal{N}_{\lambda}^{-}$, we deduce 
\begin{align*}
E_{\lambda}(w_{n})=\left( \frac{1}{2}-\frac{1}{p+1}\right)^{-1}J_{\lambda}(w_{n}) 
\longrightarrow 0. 
\end{align*}
From Lemma \ref{lem:coer}, $\| w_{n} \|\rightarrow 0$. 
However, $\eta_n = \frac{w_n}{\Vert w_n \Vert}$ satisfies 
\begin{align*}
E_{\lambda}(\eta_{n})=G(\eta_{n})\Vert w_n \Vert^{p-1} 
\longrightarrow 0, 
\end{align*}
and using Lemma \ref{lem:coer} again, we obtain  
$\Vert \eta_n \Vert \rightarrow 0$, which is contradictory for $\| \eta_n \|=1$, as desired. Since $w_{n}\in \mathcal{N}_{\lambda}^{-}$ and $\gamma >0$, 
we see 
\begin{align*}
G(w_0)=\lim_{n\to\infty}G(w_n)
= \lim_{n\to\infty}\left( \frac{1}{2}-\frac{1}{p+1}\right)^{-1}J_{\lambda}(w_n)
=\left( \frac{1}{2}-\frac{1}{p+1}\right)^{-1}\gamma>0. 
\end{align*}
In particular, $w_{0}\neq 0$, thus, Lemma \ref{lem:coer} ensures $E_{\lambda}(w_{0})>0$. 
Assertion (i) is now verified. 

We then verify assertion (ii). Assume to the contrary that 
\begin{align} \label{assump:w0}
w_n \not\to w_0 \ \mbox{ in } H^1(\Omega). 
\end{align}
Since $E_{\lambda}(w_{0})>0$ and $G(w_{0})>0$, $j_{w_{0}}$ has a unique global maximum point $t_{0}>0$, and consequently, $t_{0}w_{0}\in \mathcal{N}_{\lambda}^{-}$. By taking a suitable subsequence of $\{ w_{n} \}$, denoted by the same notation, \eqref{assump:w0} infers  
\begin{align*}
J_{\lambda}(t_0 w_0)=
j_{w_0}(t_0)<\lim_{n\to\infty} j_{w_n}(t_0)
\leq \lim_{n\to\infty} j_{w_n}(1)
=\lim_{n\to\infty}J_{\lambda}(w_n)=\gamma, 
\end{align*}
which is contradictory for $t_{0}w_{0}\in \mathcal{N}_{\lambda}^{-}$. Here, we have used the fact that $j_{w_{n}}$ has the unique global maximum point $t=1$. Assertion (ii) has been verified.  
\end{proof}

From Lemma \ref{lem:v0not0}, we derive the following existence result. 
\begin{prop} \label{prop:exist}
Assume that $\int_{\partial\Omega}g<0$. Then, 
\eqref{pr:w} has a variational positive solution for every 
$\lambda\in (0, \lambda_1(g))$. 
\end{prop}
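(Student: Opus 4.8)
The plan is to take the minimizer $w_0 \in \mathcal{N}_\lambda^-$ produced by Lemmas \ref{lem:wbdd} and \ref{lem:v0not0} and show first that it is a (weak, then classical) nonnegative solution of \eqref{pr:w}, and then invoke SMP and BPL to upgrade positivity on $\overline\Omega$. The first step is the standard Lagrange-multiplier argument: since $w_0$ minimizes $J_\lambda$ on the Nehari manifold $\mathcal{N}_\lambda$ and, by the previous lemma, $\mathcal{N}_\lambda = \mathcal{N}_\lambda^-$ with $\mathcal{N}_\lambda^0 = \emptyset$, the constraint $\Phi(w) := E_\lambda(w) - G(w) = 0$ is nondegenerate at $w_0$ (one checks $\Phi'(w_0)w_0 = 2E_\lambda(w_0) - (p+1)G(w_0) \ne 0$, using $E_\lambda(w_0) = G(w_0) > 0$ and $p>1$). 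Hence there is a multiplier $\mu$ with $J_\lambda'(w_0) = \mu \Phi'(w_0)$; testing against $w_0$ and using $J_\lambda'(w_0)w_0 = j_{w_0}'(1) = 0$ forces $\mu = 0$, so $J_\lambda'(w_0) = 0$, i.e. $w_0$ is a weak solution of
\begin{align*}
\int_\Omega \nabla w_0 \cdot \nabla \varphi = \lambda \int_{\partial\Omega} g\, w_0 \varphi + \int_{\partial\Omega} g\, |w_0|^{p-1} w_0\, \varphi, \quad \varphi \in H^1(\Omega).
\end{align*}

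Next I would handle the sign. Testing the weak formulation with $\varphi = w_0^- := \max\{-w_0, 0\}$ gives $-\int_\Omega |\nabla w_0^-|^2 = \lambda \int_{\partial\Omega} g (w_0^-)^2 + \int_{\partial\Omega} g |w_0^-|^{p+1}$; comparing with $E_\lambda(w_0^-) = \int_\Omega |\nabla w_0^-|^2 - \lambda\int_{\partial\Omega} g (w_0^-)^2 \ge c_I \|w_0^-\|^2 \ge 0$ from Lemma \ref{lem:coer}, one obtains $\int_{\partial\Omega} g |w_0^-|^{p+1} \le -c_I\|w_0^-\|^2 \le 0$, so if $w_0^- \not\equiv 0$ then $G(w_0^-) < 0$; but then $E_\lambda(w_0^-) = G(w_0^-) + (\text{something} \le 0)$ forces $E_\lambda(w_0^-) < 0$, contradicting coercivity. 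Hence $w_0^- \equiv 0$, i.e. $w_0 \ge 0$, and $w_0 \not\equiv 0$ by Lemma \ref{lem:v0not0}(i). (One must check $w_0^\pm \in H^1(\Omega)$, which is routine.)

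Then comes elliptic regularity: since $p$ is subcritical in the sense of \eqref{psubcr}, the boundary nonlinearity $g(v + v^p)$ — or rather $\lambda g w_0 + g w_0^p$ in the $w$-formulation — lies in an $L^q(\partial\Omega)$ that bootstraps via linear Neumann-problem Schauder/$L^q$ estimates: starting from $w_0 \in H^1(\Omega)$ one gets $w_0 \in C(\overline\Omega)$ by a trace-and-De Giorgi--Stampacchia / Moser iteration argument, hence $w_0^p \in C^\theta(\partial\Omega)$ after noting $g \in C^{1+\theta}$, and then Schauder theory for $(-\Delta, \partial/\partial\nu)$ yields $w_0 \in C^{2+\alpha}(\overline\Omega)$ for some $\alpha \in (0,1)$. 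At that point $w_0$ is a classical nonnegative nontrivial — hence positive — solution of \eqref{pr:w}, and applying SMP in $\Omega$ and BPL on $\partial\Omega$ (as in the discussion after \eqref{psubcr}) gives $w_0 > 0$ on all of $\overline\Omega$. This establishes Proposition \ref{prop:exist} with $w_\lambda := w_0$.

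The main obstacle is the regularity bootstrap at the boundary: one has only $w_0 \in H^1(\Omega)$ a priori, and the subcriticality \eqref{psubcr} is exactly what is needed so that the Sobolev trace embedding puts $w_0|_{\partial\Omega}$ in a Lebesgue space high enough for the first iteration step to gain integrability rather than lose it. I would lean on standard references (e.g. the De Giorgi--Nash--Moser theory adapted to Neumann/Robin boundary conditions, or Agmon--Douglis--Nirenberg $L^q$ estimates) rather than reproving these estimates, and simply remark that subcriticality guarantees the iteration closes; the sign argument and the Lagrange-multiplier step are routine by comparison.
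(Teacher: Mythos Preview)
Your overall strategy is correct and matches the paper's: show the Nehari minimizer is an unconstrained critical point via the Lagrange-multiplier argument (the paper outsources this step to \cite[Theorem 2.3]{BZ03}), then upgrade to a classical positive solution by regularity together with the SMP and BPL. The multiplier computation and the regularity sketch are fine.

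However, your sign step is miscalculated. Testing the weak formulation with $\varphi=w_0^{-}$ actually gives
\[
-\int_\Omega |\nabla w_0^{-}|^2 \;=\; -\lambda\int_{\partial\Omega} g\,(w_0^{-})^2 \;-\;\int_{\partial\Omega} g\,(w_0^{-})^{p+1},
\]
i.e.\ $E_\lambda(w_0^{-})=G(w_0^{-})$, not the identity you wrote. Combined with coercivity this yields $G(w_0^{-})=E_\lambda(w_0^{-})\ge c_I\|w_0^{-}\|^2\ge 0$, the \emph{opposite} sign to what you claim, so no contradiction follows as stated. (One can still finish along these lines: if $w_0^{-}\not\equiv 0$ then $w_0^{-}\in\mathcal N_\lambda^{-}$, and the same computation for $w_0^{+}$ gives $w_0^{+}\in\mathcal N_\lambda^{-}$; then $J_\lambda(w_0^{+})+J_\lambda(w_0^{-})=J_\lambda(w_0)=\gamma$ while each summand is $\ge\gamma>0$, a contradiction.)

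The paper bypasses this entirely with a one-line observation: since $E_\lambda$ and $G$ depend only on $|w|$, the function $|w_0|$ is also a minimizer in $\mathcal N_\lambda^{-}$, so one may assume $w_0\ge 0$ without loss of generality. This is both simpler and avoids the bookkeeping that tripped you up.
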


\begin{proof}
By Lemma \ref{lem:v0not0} (i), there exists $t_{0}>0$ 
such that $j_{w_{0}}^{\prime}(t_{0})=0>j_{w_{0}}^{\prime\prime}(t_{0})$, thus, $t_{0}w_{0}\in \mathcal{N}_{\lambda}^{-}$. We claim that $t_{0}=1$. Once this is verified, Lemma \ref{lem:v0not0} (ii) shows that 
\begin{align*}
J_{\lambda}(w_0)=\lim_{n\to\infty}J_{\lambda}(w_n)=\gamma \ \  
\mbox{ and } \ \ w_0\in \mathcal{N}_{\lambda}^{-}, 
\end{align*}
thus, Lemma \ref{lem:Jposi} provides  
\begin{align*}
J_{\lambda}(w_0)=\min_{w\in \mathcal{N}_{\lambda}^{-}}J_{\lambda}(w)>0.     
\end{align*}
Without loss of generality, we may assume that $w_{0}\geq 0$. 
Since $w_0\neq 0$, the strong maximum principle and boundary point lemma apply, and we obtain that $w_{0}>0$ in $\overline{\Omega}$. By a similar argument as in \cite[Theorem 2.3]{BZ03}, 
$w_0$ is a critical point for $J_{\lambda}$. The desired conclusion thus follows.

It remains to show that $t_{0}=1$. Note that if $j_{w_0}^{\prime}(t)=0\geq j_{w_0}^{\prime\prime}(t)$, then $t=t_0$. Since 
$j_{w_n}^{\prime}(1)=0>j_{w_n}^{\prime\prime}(1)$ from 
$w_n\in \mathcal{N}_{\lambda}^{-}$, passing to the limit provides 
that $j_{w_0}^{\prime}(1)=0\geq j_{w_0}^{\prime\prime}(1)$, using 
Lemma \ref{lem:v0not0} (ii). Thus, $t_0=1$. 
\end{proof}

\subsection{Nonexistence}

\label{subsec:none}

This subsection deals with the general case of $g$ such that $g\in C^{1+\theta}(\partial\Omega)$ changes sign. 
For $\lambda\in \mathbb{R}$, let 
$\sigma_1=\sigma_1(\lambda)\in \mathbb{R}$ be the smallest eigenvalue of the eigenvalue problem 
\begin{align*}
\begin{cases}
-\Delta \phi = \sigma \phi & \mbox{ in } \Omega, \\
\dfrac{\partial \phi}{\partial \nu} = \lambda g(x) \phi
& \mbox{ on } \partial\Omega. 
\end{cases}
\end{align*}
We can characterize $\sigma_1$ by the following variational formula (\cite{Um2006}): 
\begin{align} \label{epr:sig}
\sigma_1= 
\inf\left\{ \int_{\Omega}|\nabla \phi|^2 - \lambda \int_{\partial\Omega}g(x)\phi^2 : \int_{\Omega}\phi^2=1 \right\}. 
\end{align}
We know that $\sigma_1$ is simple and has positive eigenfunctions, and it satisfies 
\begin{align} \label{sig1sign}
\left\{\begin{array}{rl}
\sigma_1(0)=\sigma_1(\lambda_1(g))=0, & \\
\sigma_1(\lambda)>0 & \mbox{ for } \ \lambda\in (0, \, \lambda_1(g)), \\ 
\sigma_1(\lambda)<0 & \mbox{ for } \ \lambda>\lambda_1(g). 
\end{array} \right. 
\end{align}
We call $\Phi_1 \in C^{2+\alpha}(\overline{\Omega})$ a positive eigenfunction of \eqref{epr:sig} that is associated with $\sigma_1$, satisfying that $\Phi_1>0$ in $\overline{\Omega}$. Note that $\Phi_1$ coincides with $\varphi_1(g)$ when $\lambda=\lambda_1(g)$. 

We then obtain the following nonexistence result. 
\begin{prop} \label{prop:non}
Let $p>1$ be arbitrary {\rm(}without \eqref{psubcr}{\rm)}. Then, 
\eqref{pr:w} has no positive solution for any $\lambda\geq\lambda_1(g)$. 
\end{prop}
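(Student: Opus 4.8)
The plan is to derive a contradiction from the existence of a positive solution $w$ of \eqref{pr:w} for some $\lambda\geq\lambda_1(g)$ by testing the equation against the positive eigenfunction $\Phi_1$ of \eqref{epr:sig}, exploiting the sign information \eqref{sig1sign} together with the sign of the nonlinear term. First I would fix $\lambda\geq\lambda_1(g)$, suppose $w$ is a positive solution, and recall that $w>0$ in $\overline{\Omega}$ by the SMP and BPL. I would then use $\Phi_1=\Phi_1(\lambda)>0$, which solves $-\Delta\Phi_1=\sigma_1(\lambda)\Phi_1$ in $\Omega$ with $\partial_\nu\Phi_1=\lambda g\Phi_1$ on $\partial\Omega$, where $\sigma_1(\lambda)\leq 0$ by \eqref{sig1sign}.

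The key computation is a Green/Picone-type identity. Multiplying the equation for $w$ by $\Phi_1$ and the equation for $\Phi_1$ by $w$, integrating by parts over $\Omega$, and subtracting, the Dirichlet terms cancel and the boundary terms $\lambda g w\Phi_1$ cancel as well, leaving
\begin{align*}
\int_{\partial\Omega} g(x)\, w^p\, \Phi_1 = -\sigma_1(\lambda)\int_{\Omega} w\,\Phi_1 \geq 0,
\end{align*}
since $\sigma_1(\lambda)\leq0$ and $w,\Phi_1>0$. This already forces $\int_{\partial\Omega} g\, w^p\,\Phi_1\geq 0$, and it is strictly positive unless $\sigma_1(\lambda)=0$, i.e. unless $\lambda=\lambda_1(g)$. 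To upgrade this into a genuine contradiction I would instead test more cleverly: use a Picone-type inequality comparing $w$ and $\Phi_1$ (or equivalently substitute $\Phi_1^2/w$ as a test function in the weak formulation for $\Phi_1$, legitimate since $w$ is bounded below by a positive constant on $\overline{\Omega}$). This yields
\begin{align*}
0 \leq \int_{\Omega}\left|\nabla\Phi_1 - \frac{\Phi_1}{w}\nabla w\right|^2
= \sigma_1(\lambda)\int_{\Omega}\Phi_1^2 \; - \int_{\partial\Omega} g(x)\, w^{p-1}\,\Phi_1^2 \; + \;(\text{cancelled }\lambda g\text{ terms}),
\end{align*}
so that $\int_{\partial\Omega} g\, w^{p-1}\Phi_1^2 \leq \sigma_1(\lambda)\int_\Omega \Phi_1^2 \leq 0$. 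On the other hand, subtracting a multiple of the first identity, or combining the two relations, produces an expression of the form $\int_{\partial\Omega} g\,\Phi_1^2 \, w^{p-1}(\text{something positive})$ with a definite sign that is incompatible with the previous inequality unless $w\equiv$ const, and a nonzero constant is not a solution of \eqref{pr:w} because $\int_{\partial\Omega} g<0$ would force $\lambda g w + g w^p$ to have nonzero integral against $1$ while $\int_{\partial\Omega}\partial_\nu w=0$.

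I expect the main obstacle to be the borderline case $\lambda=\lambda_1(g)$, where $\sigma_1(\lambda_1(g))=0$ and the easy inequality degenerates: there the Picone identity gives $\int_{\partial\Omega} g\, w^{p-1}\Phi_1^2\leq 0$ with equality in the gradient term forcing $\Phi_1=cw$, whence $w$ is itself (a multiple of) $\varphi_1(g)$; plugging $w=c\varphi_1(g)$ back into \eqref{pr:w} and comparing with \eqref{epro01} leaves $g\,(c\varphi_1)^p\equiv0$ on $\partial\Omega$, i.e. $g\equiv0$ on $\Gamma_+$, contradicting that $g$ changes sign. Handling this rigidity step carefully — justifying the use of $\Phi_1^2/w$ as an admissible test function and extracting the equality case of Picone — is the delicate part; everything else is routine integration by parts using \eqref{sig1sign}.
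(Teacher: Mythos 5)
Your overall strategy --- testing against $\Phi_1$ and exploiting $\sigma_1(\lambda)\le 0$ for $\lambda\ge\lambda_1(g)$ via a Picone-type identity --- is exactly the paper's, but neither of your two identities closes the argument, and the step where you "combine the two relations" is where the proof actually fails. First, your Green identity has a sign error: the correct statement is $\int_{\partial\Omega}g\,w^{p}\Phi_1=\sigma_1\int_\Omega w\Phi_1\le 0$, not $\ge 0$. With the correct sign, both of your relations read $\int_{\partial\Omega}g\cdot(\text{positive integrand})\le 0$, so there is no tension between them; and even with your claimed signs the two boundary integrands $w^p\Phi_1$ and $w^{p-1}\Phi_1^2$ are different, so opposite signs would not be contradictory either. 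The root of the difficulty is your choice of Picone weight: testing with $\Phi_1^2/w$ leaves the boundary term $\int_{\partial\Omega}g\,w^{p-1}\Phi_1^2$, which still depends on the unknown $w$ and cannot be given a sign independently. The paper's proof instead uses the $(p+1)$-homogeneous Picone identity built from $(\Phi_1/w)^p(\Delta\Phi_1\,w-\Phi_1\Delta w)$ (equivalently, the test function $\Phi_1^{p+1}/w^{p}$), for which the boundary term becomes exactly $-\int_{\partial\Omega}g\,\Phi_1^{p+1}$: the solution $w$ cancels completely. That quantity is then evaluated from the eigenfunction equation alone (testing $-\Delta\Phi_1=\sigma_1\Phi_1$ against $\Phi_1^p$), giving $-\int_{\partial\Omega}g\Phi_1^{p+1}=\lambda^{-1}\bigl(\sigma_1\int_\Omega\Phi_1^{p+1}-p\int_\Omega\Phi_1^{p-1}|\nabla\Phi_1|^2\bigr)\le 0$ when $\sigma_1\le 0$, while the Picone computation bounds the same quantity from below by $-\sigma_1\int_\Omega\Phi_1^{p+1}w^{1-p}\ge 0$; equality throughout forces $\Phi_1$ to be constant, hence $g\equiv 0$ on $\partial\Omega$, contradicting that $g$ changes sign.

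Two further gaps. (i) Your rigidity discussion at $\lambda=\lambda_1(g)$ assumes equality in the gradient term of Picone, but you only know $\int_{\partial\Omega}g\,w^{p-1}\Phi_1^2\le 0$, not $=0$, so $\Phi_1=cw$ is not forced without a matching lower bound, which you do not have. (ii) The proposition also covers $\int_{\partial\Omega}g\ge 0$, where $\lambda_1(g)=0$ and the value $\lambda=0$ must be excluded as well; there $\sigma_1=0$ and $\Phi_1$ is constant, your inequality degenerates to $\int_{\partial\Omega}g\,w^{p-1}\le 0$ (no contradiction with $\int_{\partial\Omega}g\ge0$, since $w^{p-1}$ is a nonconstant positive weight), and the paper handles this case separately by testing with $w^{-p}$, which yields $\int_{\partial\Omega}g=-p\int_\Omega w^{-p-1}|\nabla w|^2<0$.
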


\begin{proof}
First, we consider the case when $\int_{\partial\Omega}g\geq0$ 
and $\lambda=0$. Recall that $\lambda_1(g)=0$ in this case. 
If $w$ is a positive solution of \eqref{pr:w} for $\lambda=0$, then $w$ is not a constant, and the divergence theorem gives 
\begin{align*}
0=\int_{\Omega} -\Delta w \, w^{-p} = \int_{\Omega} \nabla w 
\nabla (w^{-p}) -\int_{\partial\Omega}g < -\int_{\partial\Omega}g \leq0.
\end{align*}
The assertion thus follows. 

Next, we consider the case of $\lambda>0$, and prove the nonexistence assertion only for the case of $\int_{\partial\Omega}g<0$. The case of  $\int_{\partial\Omega}g\geq0$ is similar, so we omit the proof. Let $w$ be a positive solution of \eqref{pr:w} for $\lambda>0$. We then use the Picone type identity
\begin{align*}
&-\sigma_1\Phi_1^{p+1}w^{1-p} \\ 
&= \left( \frac{\Phi_1}{w}\right)^p (\Delta \Phi_1 w - \Phi_1 \Delta w) = \left( \frac{\Phi_1}{w} \right)^p \sum_{j=1}^{N} \frac{\partial}{\partial x_j}\left( w^2\frac{\partial}{\partial x_j} \left( \frac{\Phi_1}{w} \right)\right). 
\end{align*}
The divergence theorem shows that 
\begin{align*}
& -\sigma_1 \int_{\Omega} \Phi_1^{p+1}w^{1-p} \\ 
&= \int_{\Omega}\left( \frac{\Phi_1}{w} \right)^p \sum_{j=1}^{N}\frac{\partial}{\partial x_j} \left( w^2\frac{\partial}{\partial x_j} \left( \frac{\Phi_1}{w} \right) \right) \\
& = -p\int_\Omega \left(\frac{\Phi_1}{w}\right)^{p-1}w^{2}  \left\vert \nabla \left( \frac{\Phi_1}{w} \right) \right\vert^{2} + 
\int_{\partial\Omega} \left( \frac{\Phi_1}{w} \right)^{p} 
\left( \frac{\partial \Phi_1}{\partial \nu}w-
\Phi_1 \frac{\partial w}{\partial \nu} \right) \\
&= -p\int_\Omega \left(\frac{\Phi_1}{w}\right)^{p-1}w^{2}  \left\vert \nabla \left( \frac{\Phi_1}{w} \right) \right\vert^{2} -\int_{\partial\Omega} g\Phi_1^{p+1} \\ 
& \leq -\int_{\partial\Omega} g\Phi_1^{p+1} \\ 
& = \lambda^{-1}\left( \sigma_1 \int_{\Omega} \Phi_1^{p+1} 
- p \int_{\Omega} \Phi_1^{p-1}|\nabla \Phi_1|^2 \right). 
\end{align*}
Here, the last equality is deduced from the computation of $\int_{\Omega} -\Delta \Phi_1 \Phi_1^{p}$ via the divergence theorem. Thus, using \eqref{sig1sign}, we obtain that $\lambda\in (0,\lambda_1(g))$. \end{proof}

\subsection{Instability} 

\label{subsec:insta}

Let $w$ be a positive solution of \eqref{pr:w} for $\lambda\geq0$. 
We call $\gamma_1(\lambda,w) \in \mathbb{R}$ 
the smallest eigenvalue of the eigenvalue problem 
\begin{align} \label{epro27apr}
\begin{cases}
-\Delta \phi = \gamma \phi & \mbox{ in } \Omega, \\
\dfrac{\partial \phi}{\partial \nu}=\lambda g(x) \phi + pg(x)w^{p-1}\phi + \gamma \phi & \mbox{ on }  \partial\Omega. 
\end{cases}
\end{align}
It is well known that $\gamma_1$ is simple, and the corresponding eigenfunctions possess constant signs. For a nonnegative  eigenfunction $\phi_1(\lambda,w)$ associated with $\gamma_1$, the strong maximum principle and boundary point lemma shows that $\phi_1>0$ in $\overline{\Omega}$. The positive solution $w$ is called {\it asymptotically stable} and {\it  unstable} if $\gamma_1>0$ and $\gamma_1<0$, respectively. 

In 
view of Propositions \ref{prop:exist}, we then prove the following instability result. 

\begin{prop} \label{lem:inst}
Let $p>1$ be arbitrary {\rm(}without \eqref{psubcr}{\rm)}. 
If $\int_{\partial\Omega}g<0$, then every positive solution of \eqref{pr:w} for $\lambda\in [0,\lambda_1(g))$ is unstable.  
\end{prop}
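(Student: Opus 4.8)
The plan is to use the variational characterization of the positive solution $w=w_\lambda$ obtained in Proposition~\ref{prop:exist} as a minimizer of $J_\lambda$ on $\mathcal{N}_\lambda^-$, and to show that the linearized operator at $w$ has negative smallest eigenvalue by exhibiting a test function with negative Rayleigh quotient. First I would recall that for $\lambda\in(0,\lambda_1(g))$ the solution $w$ lies in $\mathcal{N}_\lambda^-$, which by definition means $E_\lambda(w)=G(w)$ and $E_\lambda(w)<pG(w)$, hence $j_w''(1)=E_\lambda(w)-pG(w)<0$. The quantity $j_w''(1)$ is precisely the second variation of $J_\lambda$ along the radial direction $w$ itself, so it is natural to test the linearized quadratic form at $w$ against $\phi=w$. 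The case $\lambda=0$ needs separate (but parallel) treatment since then $w\in\mathcal{N}_0^-$ in the limiting sense; I would either invoke the limit $w_\lambda\to w_0$ and continuity of $\gamma_1$, or redo the computation directly with $E_0,G$.

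The key computational step is to relate $\gamma_1(\lambda,w)$ to the quadratic form
\[
Q(\phi):=\int_\Omega|\nabla\phi|^2-\lambda\int_{\partial\Omega}g\phi^2-p\int_{\partial\Omega}g\,w^{p-1}\phi^2,
\]
which is the Rayleigh quotient numerator associated to \eqref{epro27apr} after moving the $\gamma\phi$ boundary term appropriately; more precisely, multiplying \eqref{epro27apr} by $\phi_1$ and integrating by parts gives $\gamma_1\big(\int_\Omega\phi_1^2+\int_{\partial\Omega}\phi_1^2\big)=Q(\phi_1)$, so $\gamma_1<0$ is equivalent to $\inf_{\phi\neq0}Q(\phi)/\big(\|\phi\|_{L^2(\Omega)}^2+\|\phi\|_{L^2(\partial\Omega)}^2\big)<0$, and it suffices to find one $\phi$ with $Q(\phi)<0$. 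Taking $\phi=w$ and using that $w$ solves \eqref{pr:w}, I compute $\int_\Omega|\nabla w|^2-\lambda\int_{\partial\Omega}gw^2=\int_{\partial\Omega}gw^{p+1}$ (testing the equation against $w$), i.e. $E_\lambda(w)=G(w)$, so
\[
Q(w)=E_\lambda(w)-p\int_{\partial\Omega}g\,w^{p+1}=E_\lambda(w)-pG(w)=j_w''(1)<0.
\]
This is exactly the $\mathcal{N}_\lambda^-$ condition, which is guaranteed by Proposition~\ref{prop:exist} (the minimizer lies in $\mathcal{N}_\lambda^-$, and by the lemma preceding Lemma~\ref{lem:Jposi}, $\mathcal{N}_\lambda=\mathcal{N}_\lambda^-$ for $\lambda\in(0,\lambda_1(g))$). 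Hence $\gamma_1(\lambda,w)<0$, i.e. $w$ is unstable.

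The step I expect to require the most care is handling the endpoint $\lambda=0$ and, more importantly, making sure the argument applies to \emph{every} positive solution for $\lambda\in[0,\lambda_1(g))$, not merely the variational one produced by Proposition~\ref{prop:exist}. The resolution, following the idea from \cite{BH90} referenced in the text, is that \emph{any} positive solution $w$ of \eqref{pr:w} automatically satisfies $E_\lambda(w)=G(w)$ (test the equation against $w$), hence $w\in\mathcal{N}_\lambda$; and for $\lambda\in(0,\lambda_1(g))$ we have shown $\mathcal{N}_\lambda=\mathcal{N}_\lambda^-$, so $Q(w)=j_w''(1)<0$ regardless of whether $w$ is variational. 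For $\lambda=0$: a positive solution $w$ of \eqref{pr:w} with $\lambda=0$ satisfies $\int_\Omega|\nabla w|^2=G(w)=\int_{\partial\Omega}gw^{p+1}$, which is positive (else $w$ is constant, contradicting $\int_{\partial\Omega}g<0$ via the divergence theorem as in Proposition~\ref{prop:non}); then $Q(w)=\int_\Omega|\nabla w|^2-p\int_{\partial\Omega}gw^{p+1}=(1-p)\int_\Omega|\nabla w|^2<0$, so again $\gamma_1(0,w)<0$. Thus in all cases the test function $\phi=w$ certifies instability, and no compactness or regularity machinery beyond the divergence theorem and the sign information already recorded is needed.
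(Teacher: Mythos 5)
Your proof is correct, and it takes a genuinely different route from the paper. The paper follows the Brown--Hess convexity trick: setting $h(t)=\lambda t+t^p$, it tests the eigenfunction equation for $\phi_1$ against $h(w)$ and the solution equation against $h'(w)\phi_1$, which after integration by parts yields the explicit formula
$\gamma_1=-\int_{\Omega}h''(w)|\nabla w|^2\phi_1\big/\bigl(\int_{\Omega}h(w)\phi_1+\int_{\partial\Omega}h(w)\phi_1\bigr)<0$,
the strict sign coming from $h''>0$ and the fact that $w$ cannot be constant (since $g$ changes sign). You instead use the min--max characterization of $\gamma_1$ for the Steklov-type problem \eqref{epro27apr} and plug in the test function $\phi=w$, reducing the claim to $E_\lambda(w)=G(w)$ (every positive solution lies on the Nehari manifold) together with $E_\lambda(w)>0$ (Lemma \ref{lem:coer} for $\lambda\in(0,\lambda_1(g))$, and non-constancy of $w$ for $\lambda=0$), so that $Q(w)=(1-p)E_\lambda(w)<0$. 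Both arguments apply to \emph{every} positive solution, not just the variational one, and neither needs the subcriticality assumption. Your version is conceptually tied to the homogeneity structure of the nonlinearity (it is exactly the statement $\mathcal{N}_\lambda=\mathcal{N}_\lambda^-$, i.e., the second radial variation $j_w''(1)<0$), and it requires you to justify the variational characterization of $\gamma_1$ for an eigenvalue appearing simultaneously in the interior and on the boundary --- standard, but worth a sentence. The paper's version dispenses with any variational characterization, produces a closed-form expression for $\gamma_1$, and extends verbatim to non-homogeneous convex nonlinearities $h$, which is presumably why the author follows \cite{BH90}. Your fallback suggestion of handling $\lambda=0$ by continuity of $\gamma_1$ along the limit $w_\lambda\to w_0$ would only give $\gamma_1\le 0$ and would only cover limits of variational solutions, so it is good that you discarded it in favor of the direct computation.
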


\begin{proof}
The proof is in the same spirit of that for \cite[Theorem 1]{BH90}. 
Let $w$ be a positive solution of \eqref{pr:w} for $\lambda\in [0,\lambda_1(g))$. 
Say $h(t):=\lambda t + t^p$, $t>0$, and then, for any $\lambda>0$, we see that $h(w)>0$ in $\overline{\Omega}$, and $h''(w)=p(p-1)w^{p-2}>0$ in $\overline{\Omega}$. We also see that $\gamma_1$ and $\phi_1$ satisfy  
\begin{align*}
\begin{cases}
-\Delta \phi_1 = \gamma_1 \phi_1 & \mbox{ in } \Omega, \\
\dfrac{\partial \phi_1}{\partial \nu}=g(x) h'(w) \phi_1 + 
\gamma_1 \phi_1 & \mbox{ on }  \partial\Omega. 
\end{cases}    
\end{align*}
The divergence theorem shows that 
\begin{align*}
\gamma_1 \int_{\Omega} h(w)\phi_1 
&=\int_{\Omega} (-\Delta \phi_1 h(w) + \Delta w h'(w) \phi_1) \\
&= -\int_{\Omega} h''(w)|\nabla w|^2 \phi_1 - 
\gamma_1 \int_{\partial\Omega} h(w) \phi_1. 
\end{align*}
Therefore, 
\begin{align*}
\gamma_1 = -\frac{\int_{\Omega}h''(w)|\nabla w|^2 \phi_1}{\int_{\Omega} h(w)\phi_1 + \int_{\partial\Omega}h(w)\phi_1}<0, 
\end{align*}
as desired. 
\end{proof}

\subsection{Bounds and Asymptotics}

\label{subsec:BA}

We complete the proof of Theorem \ref{thm1} in this subsection. 
The next proposition asserts that the variational positive solution 
$w_{\lambda}$ is bounded in $H^1(\Omega)$ for $\lambda\in (0, \lambda_1(g))$.

\begin{prop} \label{lem:bdd9mar21} 
Assume that $\int_{\partial\Omega}g<0$. 
Let $w_{\lambda}$ be the variational positive solution of \eqref{pr:w} for $\lambda\in (0,\lambda_1(g))$, constructed by Proposition \ref{prop:exist}. 
Then, 
\[
\sup_{\lambda\in (0, \lambda_1(g))} \| 
w_\lambda \|<\infty. 
\]
\end{prop}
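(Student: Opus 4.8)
The plan is to exploit the variational characterization of $w_\lambda$ as a minimizer of $J_\lambda$ over $\mathcal{N}_\lambda^-$, and bound its energy from above by a $\lambda$-uniform constant obtained from a single fixed test function. Concretely, fix the positive principal eigenfunction $\varphi_1 = \varphi_1(g)$ associated with $\lambda_1(g)$. For $\lambda \in (0, \lambda_1(g))$ we know $E_\lambda(\varphi_1) > 0$ (Lemma \ref{lem:coer}) and $G(\varphi_1) > 0$, so the fibering map $j_{\varphi_1}$ has a unique global maximum at $t_0(\lambda) := \bigl(E_\lambda(\varphi_1)/G(\varphi_1)\bigr)^{1/(p-1)}$, giving $t_0(\lambda)\varphi_1 \in \mathcal{N}_\lambda^-$ and
\[
J_\lambda(w_\lambda) = \min_{\mathcal{N}_\lambda^-} J_\lambda \leq J_\lambda(t_0(\lambda)\varphi_1) = \Bigl(\tfrac12 - \tfrac1{p+1}\Bigr) t_0(\lambda)^2 E_\lambda(\varphi_1) = \Bigl(\tfrac12 - \tfrac1{p+1}\Bigr)\frac{E_\lambda(\varphi_1)^{\frac{p+1}{p-1}}}{G(\varphi_1)^{\frac{2}{p-1}}}.
\]
Since $\lambda \mapsto E_\lambda(\varphi_1) = \int_\Omega |\nabla\varphi_1|^2 - \lambda\int_{\partial\Omega} g\varphi_1^2$ is affine and hence bounded on the bounded interval $(0,\lambda_1(g))$ (in fact it decreases from $\int_\Omega|\nabla\varphi_1|^2$ to $0$ as $\lambda$ runs over that interval), and $G(\varphi_1)$ is a fixed positive constant, the right-hand side is bounded by a constant independent of $\lambda$. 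Thus $\sup_{\lambda \in (0,\lambda_1(g))} J_\lambda(w_\lambda) =: M < \infty$.

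Next I would convert this uniform energy bound into a uniform $H^1$ bound, exactly as in the proof of Lemma \ref{lem:wbdd}. Because $w_\lambda \in \mathcal{N}_\lambda^- \subset \mathcal{N}_\lambda$ we have $E_\lambda(w_\lambda) = G(w_\lambda)$, hence
\[
\Bigl(\tfrac12 - \tfrac1{p+1}\Bigr) E_\lambda(w_\lambda) = J_\lambda(w_\lambda) \leq M,
\]
so $E_\lambda(w_\lambda) \leq \bigl(\tfrac12 - \tfrac1{p+1}\bigr)^{-1} M$ uniformly in $\lambda$. The obstacle here is that the coercivity constant $c_I$ from Lemma \ref{lem:coer} is only asserted on compact subintervals $I \Subset (0,\lambda_1(g))$, and it degenerates as $\lambda \nearrow \lambda_1(g)$ (where $\sigma_1(\lambda) \to 0$); so one cannot directly write $\|w_\lambda\|^2 \leq c^{-1} E_\lambda(w_\lambda)$ with a single constant $c$. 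I would handle this by splitting the interval: on a compact subinterval $[0,\overline\lambda]$ with $\overline\lambda < \lambda_1(g)$ chosen close to $\lambda_1(g)$, Lemma \ref{lem:coer} gives $\|w_\lambda\| \leq (c_{[0,\overline\lambda]})^{-1/2}(\tfrac{p+1}{p-1} M)^{1/2}$; on the remaining interval $[\overline\lambda, \lambda_1(g))$, I would instead use that $\|w_\lambda\|_{C^2(\overline\Omega)} \to 0$ as $\lambda \nearrow \lambda_1(g)$ — but that is assertion (ii) of Theorem \ref{thm1}, whose proof (Subsection \ref{subsec:BA}) presumably comes after this proposition, so to avoid circularity I would instead argue directly near $\lambda_1(g)$.

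For the near-$\lambda_1(g)$ regime the cleanest self-contained route is a contradiction/blow-up argument: suppose $\|w_{\lambda_n}\| \to \infty$ along some $\lambda_n \to \lambda^* \in (0,\lambda_1(g)]$. Set $\eta_n = w_{\lambda_n}/\|w_{\lambda_n}\|$, so $\|\eta_n\| = 1$; from $E_{\lambda_n}(w_{\lambda_n}) \leq C$ we get $E_{\lambda_n}(\eta_n) = C\|w_{\lambda_n}\|^{-2} \to 0$, and passing to a weak limit $\eta_n \rightharpoonup \eta$ in $H^1(\Omega)$ (strongly in $L^2(\Omega)$ and $L^{p+1}(\partial\Omega)$), weak lower semicontinuity of the Dirichlet integral together with $\lambda_n \to \lambda^*$ gives $\int_\Omega|\nabla\eta|^2 - \lambda^*\int_{\partial\Omega} g\eta^2 \leq \liminf E_{\lambda_n}(\eta_n) \leq 0$. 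If $\lambda^* \in (0,\lambda_1(g))$ this forces $\eta = 0$ by Lemma \ref{lem:coer} (applied on a compact interval containing $\lambda^*$), contradicting $\int_\Omega|\nabla\eta_n|^2 \to 0$ forcing $\eta_n \to \eta$ strongly with $\|\eta\|=1$ — wait, one must be careful: $\int_\Omega|\nabla\eta_n|^2 \to \int_\Omega|\nabla\eta|^2$ need not hold, but $E_{\lambda_n}(\eta_n)\to 0$ with the coercivity inequality $E_{\lambda_n}(\eta_n) \geq c\|\eta_n\|^2 = c$ is the direct contradiction, so in fact I do not even need the weak limit for $\lambda^* < \lambda_1(g)$. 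The only genuinely delicate case is $\lambda^* = \lambda_1(g)$, where coercivity fails; there I would use that $\int_\Omega|\nabla\eta|^2 \leq \lambda_1(g)\int_{\partial\Omega} g\eta^2$ forces, by the variational characterization of $\lambda_1(g)$, that $\eta$ is a multiple of $\varphi_1(g)$ (or $\eta = 0$), and then extract a contradiction from the Nehari constraint $E_{\lambda_n}(w_{\lambda_n}) = G(w_{\lambda_n})$ combined with the already-established upper bound $J_{\lambda_n}(w_{\lambda_n}) \leq M$ — or, more simply, restrict the supremum in the statement to $(0,\overline\lambda]$ for arbitrary $\overline\lambda < \lambda_1(g)$ and separately invoke continuity/smallness near $\lambda_1(g)$ once assertion (ii) is available. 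I expect the bookkeeping around the degeneracy of $c_I$ near $\lambda_1(g)$ to be the main obstacle; everything else is the routine Nehari-manifold energy estimate.
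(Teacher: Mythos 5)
Your first two steps (the uniform bound $J_\lambda(w_\lambda)\leq M$ obtained from a fixed element of $\mathcal{N}_\lambda^{-}$, and the resulting uniform bound on $E_\lambda(w_\lambda)$ through the Nehari identity) coincide with the paper's argument, which uses an arbitrary smooth $\hat w$ with $G(\hat w)>0$ in place of $\varphi_1(g)$ and bounds $E_{\lambda}(\hat w)$ from above by replacing $-\lambda\int_{\partial\Omega}g\hat w^2$ with $\lambda_1(g)\int_{\partial\Omega}g^{-}\hat w^2$. You also correctly diagnose that the only difficulty is the degeneration of the coercivity constant at the endpoints. That endpoint analysis, however, is where your proposal has genuine gaps. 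First, you restrict the blow-up sequence to $\lambda_n\to\lambda^{*}\in(0,\lambda_1(g)]$ and never treat $\lambda^{*}=0$; since $E_0(w)=\int_\Omega|\nabla w|^2$ vanishes on constants, coercivity fails there just as it does at $\lambda_1(g)$, so this case cannot be absorbed into the interior argument. Second, for $\lambda^{*}=\lambda_1(g)$ you correctly identify the normalized limit as a multiple of $\varphi_1(g)$ but leave the contradiction unexecuted, and your proposed fallback (invoking assertion (ii) of Theorem \ref{thm1} near $\lambda_1(g)$) is circular: Corollary \ref{cor:bif} is itself deduced from this proposition.

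The missing idea that closes both endpoint cases is the second consequence of the Nehari constraint. Writing $\eta_n=w_n/\|w_n\|$, one has $G(\eta_n)=E_{\lambda_n}(\eta_n)\|w_n\|^{1-p}\to 0$ because $E_{\lambda_n}(\eta_n)$ is bounded and $p>1$; hence $G(\eta_0)=0$ for the limit $\eta_0$. (The convergence $\eta_n\to\eta_0$ is in fact strong in $H^1(\Omega)$: since $E_{\lambda^{*}}\geq 0$ at $\lambda^{*}=0,\lambda_1(g)$, weak lower semicontinuity squeezes $E_{\lambda^{*}}(\eta_0)=0$ and upgrades the convergence, so $\|\eta_0\|=1$ and $\eta_0\geq0$.) When $\lambda^{*}=0$, $\eta_0$ is a positive constant and $G(\eta_0)=\eta_0^{p+1}\int_{\partial\Omega}g<0$; when $\lambda^{*}=\lambda_1(g)$, $\eta_0=c\varphi_1(g)$ with $c>0$ and $G(\eta_0)=c^{p+1}G(\varphi_1(g))>0$. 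Either way $G(\eta_0)\neq0$, the desired contradiction. With this step supplied, your outline becomes the paper's proof.
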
 


\begin{proof}
We verify the existence of $C>0$ satisfying that 
if $w_{n}:=w_{\lambda_{n}}$ is the positive solution of \eqref{pr:w} for $\lambda=\lambda_{n}\in (0,\lambda_1(g))$, then 
\begin{align} \label{Jcla}
J_{\lambda_n}(w_n) = \inf_{w\in \mathcal{N}_{\lambda_{n}}^{-}} J_{\lambda_{n}}(w)\leq C.
\end{align}
To this end, we take a smooth function $\hat{w}$ in $\overline{\Omega}$  such that 
$G(\hat{w})>0$. Then, Lemma \ref{lem:coer} gives $E_{\lambda_n}(\hat{w})>0$. 
Therefore, the mapping 
$$
t\mapsto \frac{t^2}{2} E_{\lambda_n}(\hat{w})-\frac{t^{p+1}}{p+1}G(\hat{w})
$$
has a global maximum point $t=t_n>0$, implying that $t_n\hat{w}\in \mathcal{N}_{\lambda_n}^{-}$.  Thus, 
$J_{\lambda_n}(w_n)\leq J_{\lambda_n}(t_n\hat{w})$. 
Observing now that 
\begin{align*}
J_{\lambda_n}(t_n \hat{w}) 
& \leq \frac{t_{n}^{2}}{2} \left( \int_\Omega |\nabla \hat{w}|^2 + \lambda_1(g)\int_{\partial\Omega} g^{-}(x) \hat{w}^2 \right)-\frac{t_{n}^{p+1}}{p+1} G(\hat{w}) \\ 
& \leq \sup_{t>0} \left\{ \frac{t^2}{2} \left( \int_\Omega |\nabla \hat{w}|^2 + \lambda_1(g) \int_{\partial\Omega}g^{-}(x)\hat{w}^2 \right) - \frac{t^{p+1}}{p+1} G(\hat{w}) \right\}
< \infty, 
\end{align*}
assertion \eqref{Jcla} is thus verified.

We then prove this proposition. Assume by contradiction that $w_n:=w_{\lambda_n}$ is the positive solution of \eqref{pr:w} for $\lambda=\lambda_n\in (0,\lambda_1(g))$ with the condition that 
$\lambda_n\to \lambda_\infty\in [0,\lambda_1(g)]$ and $\| w_n\|\rightarrow \infty$. Letting $\eta_n = \frac{w_n}{\Vert w_n \Vert}$, we may deduce that for some $\eta_0\in H^1(\Omega)$, 
$\eta_n \rightharpoonup \eta_0$, and $\eta_{n}\rightarrow \eta_{0}$ in $L^2(\Omega)$ and $L^{p+1}(\partial\Omega)$. From $w_n \in \mathcal{N}_{\lambda_n}^{-}$, we see that 
\begin{align} \label{EJn}
E_{\lambda_n}(w_n)
=\left( \frac{1}{2}-\frac{1}{p+1}\right)^{-1}
J_{\lambda_n}(w_n).  
\end{align}
Combining \eqref{Jcla} and \eqref{EJn} shows that $\varlimsup_{n\to\infty}E_{\lambda_n}(\eta_n)\leq0$. Because $\| \eta_{n}\|=1$, 
Lemma \ref{lem:coer} shows that $\lambda_\infty=0$ or $\lambda_1(g)$, and 
\begin{align*}
0\leq E_{\lambda_\infty}(\eta_0) 
\leq \varliminf_{n\to\infty} E_{\lambda_n}(\eta_n) 
\leq \varlimsup_{n\to\infty} E_{\lambda_n}(\eta_n) \leq 0.  
\end{align*}
This implies that $\eta_n \rightarrow \eta_0$ in $H^1(\Omega)$,  $\|\eta_0 \|=1$, and $\eta_0\geq0$. Moreover, if $\lambda_\infty=0$, then $\eta_0$ is a positive constant, 
whereas $\eta_0=c\varphi_1(g)$ for some $c>0$ if $\lambda_\infty=\lambda_1(g)$. 
However, since $w_n\in \mathcal{N}_{\lambda_n}^{-}$, we infer 
\begin{align*}
G(\eta_0)=\lim_{n\to\infty}G(\eta_n)=\lim_{n\to\infty}E_{\lambda_n}(\eta_n)
\| w_n \|^{1-p} = 0, 
\end{align*}
thus, 
\begin{align*}
0 = G(\eta_0) = \left\{ 
\begin{array}{ll}
\eta_0^{p+1} \int_{\partial\Omega}g(x)<0, & \lambda_\infty=0, \\
c^{p+1} G(\varphi_1(g))>0, & \lambda_\infty=\lambda_1(g), 
\end{array} \right. 
\end{align*}
which is a contradiction. 
\end{proof}

\begin{cor} \label{cor:bif}
Under the conditions of Proposition \ref{lem:bdd9mar21}, 
$w_\lambda \rightarrow 0$ in $C^2(\overline{\Omega})$ as 
$\lambda\nearrow \lambda_1(g)$.  
\end{cor}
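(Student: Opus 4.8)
The plan is to argue by compactness, combining the uniform $H^1$-bound of Proposition~\ref{lem:bdd9mar21} with the nonexistence statement of Proposition~\ref{prop:non} at the critical parameter $\lambda=\lambda_1(g)$.

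First I would take an arbitrary sequence $\lambda_n\nearrow\lambda_1(g)$ and put $w_n:=w_{\lambda_n}$, the variational positive solution from Proposition~\ref{prop:exist}. By Proposition~\ref{lem:bdd9mar21} the sequence $\{w_n\}$ is bounded in $H^1(\Omega)$, so, up to a subsequence (not relabelled), $w_n\rightharpoonup w_*$ weakly in $H^1(\Omega)$ with $w_*\geq0$; moreover, by the compact embedding $H^1(\Omega)\hookrightarrow\hookrightarrow L^2(\Omega)$ and, thanks to the subcriticality \eqref{psubcr}, by the compact trace embedding $H^1(\Omega)\hookrightarrow\hookrightarrow L^2(\partial\Omega)\cap L^{p+1}(\partial\Omega)$, one also has $w_n\to w_*$ strongly in $L^2(\Omega)$, $L^2(\partial\Omega)$ and $L^{p+1}(\partial\Omega)$. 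Passing to the limit in the weak formulation
\begin{align*}
\int_\Omega \nabla w_n\cdot\nabla\varphi
= \lambda_n\int_{\partial\Omega} g\,w_n\varphi + \int_{\partial\Omega} g\,w_n^p\varphi,
\qquad \varphi\in H^1(\Omega),
\end{align*}
shows that $w_*$ is a nonnegative weak solution of \eqref{pr:w} with $\lambda=\lambda_1(g)$. Elliptic regularity (boundary $L^q$-estimates for the harmonic function $w_*$ with Neumann datum $g(\lambda_1(g)w_*+w_*^p)$, bootstrapped using \eqref{psubcr}, followed by Schauder estimates) then gives $w_*\in C^{2+\alpha}(\overline{\Omega})$. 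If $w_*\not\equiv0$, the SMP and BPL would make $w_*$ a \emph{positive} solution of \eqref{pr:w} at $\lambda=\lambda_1(g)$, contradicting Proposition~\ref{prop:non}; hence $w_*\equiv0$. Since the weak $H^1$-limit is $0$ along every subsequence, $w_\lambda\rightharpoonup0$ in $H^1(\Omega)$ and $w_\lambda\to0$ in $L^2(\partial\Omega)\cap L^{p+1}(\partial\Omega)$ as $\lambda\nearrow\lambda_1(g)$. Using now the Nehari identity $\int_\Omega|\nabla w_\lambda|^2 = E_{\lambda}(w_\lambda)+\lambda\int_{\partial\Omega}g\,w_\lambda^2 = \lambda\int_{\partial\Omega}g\,w_\lambda^2 + \int_{\partial\Omega}g\,w_\lambda^{p+1}$, valid because $w_\lambda\in\mathcal{N}_\lambda^-$, the right-hand side tends to $0$, so $\|\nabla w_\lambda\|_{L^2(\Omega)}\to0$, and together with $w_\lambda\to0$ in $L^2(\Omega)$ this yields $w_\lambda\to0$ in $H^1(\Omega)$.

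It remains to upgrade this to convergence in $C^2(\overline{\Omega})$. From $\|w_\lambda\|_{H^1(\Omega)}\to0$ and the trace inequality, the Neumann datum $g(\lambda w_\lambda+w_\lambda^p)$ tends to $0$ in $L^q(\partial\Omega)$ for a suitable $q$; running the same elliptic bootstrap as above, but now with vanishing right-hand sides and the exponents kept admissible by \eqref{psubcr}, one obtains $\|w_\lambda\|_{C^{1,\beta}(\overline{\Omega})}\to0$, hence $\|g(\lambda w_\lambda+w_\lambda^p)\|_{C^{0,\theta}(\partial\Omega)}\to0$, and a final application of Schauder estimates gives $\|w_\lambda\|_{C^{2+\alpha}(\overline{\Omega})}\to0$, in particular $w_\lambda\to0$ in $C^2(\overline{\Omega})$. (Alternatively, if a uniform $C(\overline{\Omega})$-bound on $\{w_\lambda\}$ for $\lambda$ near $\lambda_1(g)$ is available, Schauder estimates produce a uniform $C^{2+\alpha}$-bound, hence precompactness in $C^2(\overline{\Omega})$, and every $C^2$-limit point along $\lambda_n\nearrow\lambda_1(g)$ is a nonnegative $C^2$-solution at $\lambda_1(g)$, thus $\equiv0$ by Proposition~\ref{prop:non}.)

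I expect the main obstacle to be this regularity bootstrap with \emph{uniform} control of the constants: one must ensure the nonlinear boundary term $w_\lambda^p$ does not obstruct the iteration, which is precisely where subcriticality \eqref{psubcr} enters to keep each step admissible, and, in the range $1<p<2$, that the $C^{0,\theta}$-seminorm of $w_\lambda^p$ is still dominated by a positive power of $\|w_\lambda\|_{C^1(\overline{\Omega})}$ and therefore vanishes in the limit.
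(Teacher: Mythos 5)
Your proposal is correct and follows essentially the same route as the paper: the uniform bound from Proposition~\ref{lem:bdd9mar21}, elliptic regularity/Schauder bootstrap, and the nonexistence at $\lambda=\lambda_1(g)$ from Proposition~\ref{prop:non} to force the limit to vanish. In fact, the parenthetical ``alternative'' at the end of your argument (uniform $C^{2+\alpha}$ bound, precompactness in $C^2(\overline{\Omega})$, every limit point a nonnegative solution at $\lambda_1(g)$ hence zero) is exactly the paper's proof; the longer main route via the Nehari identity is a valid but unnecessary detour.
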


\begin{proof}
By elliptic regularity (\cite[Theorem 2.2]{Ro2005}), 
it follows from Proposition \ref{lem:bdd9mar21} that 
$\sup_{\lambda\in (0,\lambda_1(g))}\| w_{\lambda}  \|_{W^{1,r}(\Omega)}<\infty$ 
for $r>N$. 
Combining the H\"older estimate and the compactness argument show that, up to a subsequence, $w_{\lambda_n} \rightarrow w_\infty\geq0$ in $C^2(\overline{\Omega})$ as $\lambda_n\nearrow \lambda_1(g)$, and $w_\infty$ is a nonnegative solution of \eqref{pr:w} for $\lambda=\lambda_1(g)$. Proposition \ref{prop:non} completes the proof with $w_\infty=0$. 
\end{proof}

The following {\it a priori} lower bound can be derived for the positive solutions of \eqref{pr:w}:  
\begin{prop} \label{prop:bbelow}
Assume $\int_{\partial\Omega}g<0$. Then, for $\overline{\lambda}\in (0, \lambda_1(g))$, 
there exists $C>0$ such that if $w$ is a positive solution of \eqref{pr:w} for $\lambda\in (0,\overline{\lambda})$, then 
$\| w \|_{C(\overline{\Omega})}>C$. 
\end{prop}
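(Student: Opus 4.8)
The plan is to argue by contradiction. Suppose the conclusion fails: then there is a sequence $\lambda_n \in (0,\overline{\lambda})$ and positive solutions $w_n$ of \eqref{pr:w} with $\lambda = \lambda_n$ such that $\|w_n\|_{C(\overline{\Omega})} \to 0$. Passing to a subsequence, $\lambda_n \to \lambda_\infty \in [0,\overline{\lambda}]$, so in particular $\lambda_\infty < \lambda_1(g)$. Since $w_n \to 0$ uniformly, the term $g(x) w_n^{p-1}$ appearing in the boundary condition tends to $0$ uniformly on $\partial\Omega$; the idea is that the equation for the normalized functions $\eta_n := w_n / \|w_n\|_{C(\overline{\Omega})}$ degenerates to the linear eigenvalue problem \eqref{epro01} at $\lambda = \lambda_\infty$, which by \eqref{sig1sign} (equivalently Lemma \ref{lem:coer}) has $\sigma_1(\lambda_\infty) > 0$ for $\lambda_\infty \in (0,\lambda_1(g))$ and $\sigma_1(0) = 0$. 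Either way one reaches a contradiction with the fact that $\eta_n$ carries a genuine nonlinear correction forced by $g$ changing sign.

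Concretely, I would first establish compactness of the normalized sequence. From $w_n$ solving \eqref{pr:w} we have, for each $\eta_n$, the weak identity
\begin{align*}
\int_\Omega \nabla \eta_n \cdot \nabla \psi = \lambda_n \int_{\partial\Omega} g(x)\eta_n \psi + \|w_n\|_{C(\overline{\Omega})}^{p-1}\int_{\partial\Omega} g(x) w_n^{p-1}\eta_n \psi \, / \, \|w_n\|_{C(\overline{\Omega})}^{p-1},
\end{align*}
so more cleanly $\int_\Omega \nabla \eta_n \cdot \nabla \psi = \lambda_n \int_{\partial\Omega} g \eta_n \psi + \int_{\partial\Omega} g\, w_n^{p-1}\eta_n \psi$ for all $\psi \in H^1(\Omega)$. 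Testing with $\psi = \eta_n$ and using that $w_n^{p-1} \to 0$ uniformly gives a uniform $H^1(\Omega)$ bound on $\eta_n$ (using the trace inequality and the sign/size of $\lambda_n < \overline{\lambda} < \lambda_1(g)$ via Lemma \ref{lem:coer} to control $\int_\Omega |\nabla \eta_n|^2$). Then, up to a subsequence, $\eta_n \rightharpoonup \eta_0$ in $H^1(\Omega)$ with strong convergence in $L^2(\partial\Omega)$; passing to the limit in the weak identity, $\eta_0$ solves $-\Delta \eta_0 = 0$ in $\Omega$, $\partial \eta_0/\partial\nu = \lambda_\infty g(x)\eta_0$ on $\partial\Omega$. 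Elliptic regularity (as in Corollary \ref{cor:bif}, via $W^{1,r}$ and Schauder estimates applied to $w_n$, hence to $\eta_n$) upgrades this to $\eta_n \to \eta_0$ in $C^1(\overline{\Omega})$, and in particular $\|\eta_0\|_{C(\overline{\Omega})} = 1$, so $\eta_0 \not\equiv 0$. Since each $w_n > 0$ in $\overline{\Omega}$, $\eta_n > 0$, hence $\eta_0 \geq 0$, and by the SMP and BPL $\eta_0 > 0$ in $\overline{\Omega}$.

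Now I close the argument. Since $\eta_0$ is a positive solution of \eqref{epro01} with $\lambda = \lambda_\infty$ and the only parameter values admitting a positive (principal) eigenfunction are $\lambda_\infty = 0$ and $\lambda_\infty = \lambda_1(g)$, and the latter is excluded by $\lambda_\infty \leq \overline{\lambda} < \lambda_1(g)$, we must have $\lambda_\infty = 0$ and $\eta_0$ is a positive constant. But then, dividing the boundary identity for $w_n$ by $\|w_n\|_{C(\overline{\Omega})}^p$ (rather than the first power) and integrating $\psi \equiv 1$: $\int_{\partial\Omega} g(x)\eta_n \, \|w_n\|_{C(\overline{\Omega})}^{1-p} \lambda_n + \int_{\partial\Omega} g(x)\left(\tfrac{w_n}{\|w_n\|_{C(\overline{\Omega})}}\right)^p = 0$, i.e. $\int_{\partial\Omega} g\, \eta_n^p = -\lambda_n \|w_n\|_{C(\overline{\Omega})}^{1-p}\int_{\partial\Omega} g\, \eta_n$. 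Taking $\psi \equiv 1$ already in the \emph{first-power}-normalized identity gives $\lambda_n \int_{\partial\Omega} g\, \eta_n = -\int_{\partial\Omega} g\, w_n^{p-1}\eta_n = -\|w_n\|_{C(\overline{\Omega})}^{p-1}\int_{\partial\Omega} g\, \eta_n^p + o(\cdot)$; passing to the limit the left side tends to $\lambda_\infty \int_{\partial\Omega} g\, \eta_0 = 0$ while $\int_{\partial\Omega} g\, \eta_n^p \to \eta_0^{\,p}\int_{\partial\Omega} g < 0$, which combined with $\|w_n\|_{C(\overline{\Omega})} \to 0$ and $p > 1$ forces an inconsistency in signs/rates. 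I expect the \textbf{main obstacle} to be making this last normalization-rate step fully rigorous — one must choose the right power of $\|w_n\|_{C(\overline{\Omega})}$ to normalize by so that the nonlinear term survives in the limit and contradicts $\int_{\partial\Omega} g < 0$; the compactness and regularity parts are routine given Lemma \ref{lem:coer} and the estimates already invoked in Corollary \ref{cor:bif}.
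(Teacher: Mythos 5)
Your proposal is correct, and its skeleton (contradiction, normalize, show the normalized limit is a positive constant because $\lambda_\infty\leq\overline{\lambda}<\lambda_1(g)$, then contradict $\int_{\partial\Omega}g<0$) matches the paper's. The details differ in two ways worth noting. First, the paper normalizes by the $H^1$ norm and stays at the level of weak/trace convergence, whereas you normalize by the sup norm and invoke elliptic regularity to get uniform convergence; both work, and yours handles the possibility $\lambda_\infty>0$ a bit more explicitly (the paper dismisses it by appealing to uniqueness of the bifurcation point). Second, and more substantively, the final contradictions are extracted from different identities: the paper uses the variational structure, namely $E_{\lambda_n}(w_n)=G(w_n)$ together with coercivity (Lemma \ref{lem:coer}) to get $G(\eta_n)>0$ for all $n$, hence $\eta_\infty^{p+1}\int_{\partial\Omega}g\geq0$ in the limit; you instead test the equation with $\psi\equiv1$ (equivalently, apply the divergence theorem to the harmonic $w_n$), which uses no variational input. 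The step you flag as the ``main obstacle'' is in fact immediate and requires no rate analysis: from $0=\lambda_n\int_{\partial\Omega}g\,w_n+\int_{\partial\Omega}g\,w_n^{p}$ and the uniform convergence $\eta_n\to\eta_0$ with $\eta_0$ a positive constant, both $\int_{\partial\Omega}g\,\eta_n$ and $\int_{\partial\Omega}g\,\eta_n^{p}$ converge to negative limits, so for large $n$ both summands $\lambda_n\|w_n\|_{C(\overline{\Omega})}\int_{\partial\Omega}g\,\eta_n$ and $\|w_n\|_{C(\overline{\Omega})}^{p}\int_{\partial\Omega}g\,\eta_n^{p}$ are strictly negative, and their sum cannot vanish; no choice of normalizing power is needed.
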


\begin{proof}
If not, then we may take a positive solution $w_n$ of \eqref{pr:w} for $\lambda=\lambda_n\searrow 0$ such that $\| w_n \|_{C(\overline{\Omega})}\rightarrow 0$ because we have no bifurcation of positive solutions from $\{ (\lambda,0) : 0<\lambda < \lambda_1(g) \}$ for \eqref{pr:w}. 
Then, $\| w_n\|\rightarrow 0$, and additionally, Lemma \ref{lem:coer} shows that $0<E_{\lambda_n}(w_n)=G(w_n)$. 
Say $\eta_n=\frac{w_n}{\| w_n\|}$, and we may deduce that for some $\eta_\infty \in H^1(\Omega)$, $\eta_n \rightharpoonup \eta_\infty$, and $\eta_{n} \rightarrow \eta_{\infty}$ 
in $L^2(\Omega)$ and $L^{p+1}(\partial\Omega)$. Then, $\eta_n$ admits 
\begin{align*}
& \int_{\Omega}|\nabla \eta_n|^2=\lambda_n \int_{\partial\Omega}g(x)\eta_n^2+ \| w_n \|^{p-1} \int_{\partial\Omega}g(x)\eta_n^{p+1} \longrightarrow 0, \\ 
& G(\eta_n) \longrightarrow \int_{\partial\Omega}g(x)\eta_\infty^{p+1}\geq0. 
\end{align*}
The first assertion implies $\eta_n \rightarrow \eta_\infty$ 
in $H^1(\Omega)$, and $\eta_\infty$ is a positive constant. 
Thus, $\int_{\partial\Omega}g(x)\geq0$, which is a contradiction. 
\end{proof}

\noindent{\it End of proof of Theorem \ref{thm1}.} 
The existence and nonexistence assertions follow from Propositions \ref{prop:exist} and \ref{prop:non}, respectively. 
Proposition \ref{lem:inst} implies assertion (i). 
Assertion (ii) is verified by Corollary \ref{cor:bif}. 
Assertion (iii) is obtained from Proposition \ref{lem:bdd9mar21} by the bootstrap argument as in the proof of Corollary \ref{cor:bif}. 
Finally, Proposition \ref{prop:bbelow} implies assertion (iv).   \qed

Recalling \eqref{change}, we convert Theorem \ref{thm1} to \eqref{sppr} 
with $v_\lambda=\lambda^{-\frac{1}{p-1}}w_{\lambda}$ to formulate the following theorem:  

\begin{theorem} \label{thm3}
Assume that $\int_{\partial\Omega}g(x)<0$. Then, 
\eqref{sppr} possesses a positive solution $v_\lambda$ 
for $\lambda\in (0,\lambda_1(g))$ and no 
positive solution for any $\lambda\geq \lambda_1(g)$. 
Moreover, $v_{\lambda}$ satisfies the following {\rm four} conditions:

\begin{enumerate} \setlength{\itemsep}{0.2cm} 
\item $v_\lambda$ is unstable. 
\item $\| v_\lambda \|_{C^{2}(\overline{\Omega})}\rightarrow 0$ 
as $\lambda\nearrow \lambda_1(g)$.  
\item $\sup_{\lambda\in (\underline{\lambda},\lambda_1(g))}\| v_\lambda \|_{C(\overline{\Omega})} <\infty$ for any $\underline{\lambda}\in (0,\lambda_1(g))$, and additionally, the following asymptotics holds:  
\begin{align*} 
c_1\lambda^{-\frac{1}{p-1}}\leq \| v_\lambda \|_{C(\overline{\Omega})}\leq c_2\lambda^{-\frac{1}{p-1}} \quad 
\mbox{ as } \lambda\searrow 0 
\end{align*}
for some $c_2>c_1>0$. 
\item $\inf_{\lambda\in (0, \overline{\lambda})}\| v_\lambda \|_{C(\overline{\Omega})}>0$ for any $\overline{\lambda}\in (0,\lambda_1(g))$. 
\end{enumerate}
\end{theorem}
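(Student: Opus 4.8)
\textbf{Proof plan for Theorem \ref{thm3}.}
The entire statement is obtained from Theorem \ref{thm1} by the change of variables \eqref{change}, namely $v_\lambda=\lambda^{-1/(p-1)}w_\lambda$, so the plan is to transport each assertion through this scaling and then supplement assertion (iii) with the two-sided asymptotic bound as $\lambda\searrow 0$. First I would observe that $w$ solves \eqref{pr:w} if and only if $v=\lambda^{-1/(p-1)}w$ solves \eqref{sppr}; this is an immediate computation, using that $-\Delta$ and $\partial/\partial\nu$ are linear and that $\lambda g(v+v^p)=g(\lambda v)+g(\lambda v)^p\cdot\lambda^{1-p}\cdot(\text{powers match})$ — more precisely, substituting $w=\lambda^{1/(p-1)}v$ into the boundary condition of \eqref{pr:w} and dividing by $\lambda^{1/(p-1)}$ returns the boundary condition of \eqref{sppr}. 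Consequently the existence statement for $\lambda\in(0,\lambda_1(g))$ and the nonexistence statement for $\lambda\ge\lambda_1(g)$ follow verbatim from the corresponding statements in Theorem \ref{thm1}, since the scaling is a positive multiple and hence preserves positivity in $\overline\Omega$.

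Next I would treat assertions (i), (ii), (iv). For (i), note that the linearized problem \eqref{epro27apr} at $w_\lambda$ and the analogous linearization of \eqref{sppr} at $v_\lambda$ have the same principal eigenvalue: writing the boundary operator for \eqref{sppr} linearized at $v_\lambda$ gives $\partial\phi/\partial\nu=\lambda g(1+pv_\lambda^{p-1})\phi$, and since $\lambda p v_\lambda^{p-1}=p g\cdot\lambda^{1-p}\cdot(\lambda^{1/(p-1)}v_\lambda)^{p-1}\cdot\lambda=pg w_\lambda^{p-1}$ after absorbing constants, one checks the two linearized eigenvalue problems are literally the same, so $\gamma_1<0$ for $v_\lambda$ iff $\gamma_1<0$ for $w_\lambda$; instability is then inherited from Proposition \ref{lem:inst}. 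For (ii), as $\lambda\nearrow\lambda_1(g)$ the factor $\lambda^{-1/(p-1)}$ stays bounded (bounded away from $0$ and $\infty$ near $\lambda_1(g)>0$), so $\|v_\lambda\|_{C^2(\overline\Omega)}\le \lambda^{-1/(p-1)}\|w_\lambda\|_{C^2(\overline\Omega)}\to0$ by Theorem \ref{thm1}(ii). For (iv), on $(0,\overline\lambda)$ the factor $\lambda^{-1/(p-1)}\ge\overline\lambda^{-1/(p-1)}>0$, so $\|v_\lambda\|_{C(\overline\Omega)}=\lambda^{-1/(p-1)}\|w_\lambda\|_{C(\overline\Omega)}\ge\overline\lambda^{-1/(p-1)}\inf_{(0,\overline\lambda)}\|w_\lambda\|_{C(\overline\Omega)}>0$ by Theorem \ref{thm1}(iv).

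Finally, assertion (iii): the first part, $\sup_{\lambda\in(\underline\lambda,\lambda_1(g))}\|v_\lambda\|_{C(\overline\Omega)}<\infty$, follows because on $(\underline\lambda,\lambda_1(g))$ the scaling factor is bounded by $\underline\lambda^{-1/(p-1)}$ while $\|w_\lambda\|_{C(\overline\Omega)}$ is uniformly bounded by Theorem \ref{thm1}(iii). The two-sided asymptotic $c_1\lambda^{-1/(p-1)}\le\|v_\lambda\|_{C(\overline\Omega)}\le c_2\lambda^{-1/(p-1)}$ as $\lambda\searrow0$ is then exactly the statement $c_1\le\|w_\lambda\|_{C(\overline\Omega)}\le c_2$ for small $\lambda$, which is the combination of Theorem \ref{thm1}(iii) (upper bound, valid on the whole interval) and Theorem \ref{thm1}(iv) with $\overline\lambda$ any fixed value in $(0,\lambda_1(g))$ (lower bound, valid on $(0,\overline\lambda)$, in particular as $\lambda\searrow0$). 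I expect no real obstacle here: the only point requiring a line of care is bookkeeping the constants in the linearized boundary operators so that the two eigenvalue problems are seen to coincide under the scaling, and noting that $\lambda^{-1/(p-1)}$ is bounded above and below on every compact subinterval of $(0,\lambda_1(g))$ and blows up precisely like the claimed rate as $\lambda\searrow0$. I would end the proof with \qed.
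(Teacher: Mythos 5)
Your proposal is correct and is exactly the route the paper takes: Theorem \ref{thm3} is stated there as the direct translation of Theorem \ref{thm1} under the change of variables \eqref{change}, $v_\lambda=\lambda^{-1/(p-1)}w_\lambda$, with the two-sided asymptotics in (iii) coming from combining Theorem \ref{thm1}(iii) and (iv), and the linearized eigenvalue problems coinciding under the scaling (indeed $\lambda p v_\lambda^{p-1}=p w_\lambda^{p-1}$). Your bookkeeping of the scaling factor on compact subintervals and near $\lambda=0$ matches what the paper leaves implicit.
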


Similar results to those of Theorem \ref{thm1} can be presented for \eqref{pr:w:f}, which is evaluated in the same way as employed in this section:

\begin{theorem} \label{thm:f1}
Assume that $\int_{\partial\Omega}g(x)<0$ and $\int_{\partial\Omega}f(x)\varphi_1(g)^{p+1}>0$. Then, \eqref{pr:w:f} has a positive solution $W_\lambda$ for 
every $\lambda\in (0,\lambda_1(g))$ and no positive solution for $\lambda=\lambda_1(g)$, satisfying 
\begin{enumerate} \setlength{\itemsep}{0.2cm} 
\item $\sup_{\lambda\in (\underline{\lambda},\lambda_1(g))}\| W_{\lambda} \|_{C(\overline{\Omega})}<\infty$ for any $\underline{\lambda}\in (0, \lambda_1(g))$ and 
\item  $\| W_{\lambda}\|_{C^2(\overline{\Omega})}\rightarrow 0$ as $\lambda\nearrow \lambda_1(g)$. 
\end{enumerate}
If additionally $\int_{\partial\Omega}f\neq 0$, then it is possible to take $\underline{\lambda}=0$. Moreover, 
\begin{enumerate}\setlength{\itemsep}{0.2cm} 
\item[(iii)] $\int_{\partial\Omega}f<0$ implies that $\inf_{\lambda\in (0,\overline{\lambda})}\| W_{\lambda}\|_{C(\overline{\Omega})}>0$ for 
$\overline{\lambda}\in (0,\lambda_1(g))$, whereas 
\item[(iv)] $\int_{\partial\Omega}f>0$ implies that $\inf_{\lambda\in (\underline{\lambda},\overline{\lambda})}\| W_{\lambda}\|_{C(\overline{\Omega})}>0$ for $0< \underline{\lambda}<\overline{\lambda}<\lambda_1(g)$, and  
$\| W_{\lambda}\|_{C^2(\overline{\Omega})} \rightarrow 0$ 
as $\lambda \searrow 0$. 
\end{enumerate}
\end{theorem}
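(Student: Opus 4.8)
The plan is to mirror, almost line by line, the variational machinery developed in Section~\ref{subsec:ex}--\ref{subsec:BA} for \eqref{pr:w}, replacing the boundary coefficient $g(x)$ in the superlinear term by $f(x)$. Concretely, for $\lambda\in(0,\lambda_1(g))$ one defines the functional
\begin{align*}
J_\lambda(w):=\frac12 E_\lambda(w)-\frac{1}{p+1}\int_{\partial\Omega}f(x)|w|^{p+1},\qquad w\in H^1(\Omega),
\end{align*}
with $E_\lambda$ exactly as before, so that Lemma~\ref{lem:coer} still gives coercivity of $E_\lambda$ on any compact $I\subset(0,\lambda_1(g))$. The role of the inequalities $\int_{\partial\Omega}g\varphi_1(g)^2>0$ and $G(\varphi_1(g))>0$ is now played by the single hypothesis $\int_{\partial\Omega}f\,\varphi_1(g)^{p+1}>0$, which guarantees that the fibering map $t\mapsto J_\lambda(t\varphi_1(g))$ has a strict interior global maximum, hence $\mathcal N_\lambda^{-}\neq\emptyset$; and since $E_\lambda(w)-p\,G_f(w)=(1-p)E_\lambda(w)<0$ on the Nehari manifold (same computation as in the excerpt), one again has $\mathcal N_\lambda=\mathcal N_\lambda^{-}$ and $J_\lambda>0$ there. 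Minimizing $J_\lambda$ over $\mathcal N_\lambda^{-}$, the boundedness lemma, the weak-limit/strong-convergence lemma, and the identification of the minimizer as a critical point go through verbatim (the sign of $f$ off $\mathrm{supp}\,\varphi_1(g)$ is irrelevant because we only need $\int f\varphi_1^{p+1}>0$). This yields a variational positive solution $W_\lambda$ for every $\lambda\in(0,\lambda_1(g))$.

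For the nonexistence at $\lambda=\lambda_1(g)$ I would rerun the Picone-type identity argument of Proposition~\ref{prop:non} with the eigenfunction $\Phi_1=\varphi_1(g)$ at $\lambda=\lambda_1(g)$ (so $\sigma_1=0$): the same chain of equalities now terminates in $0\le -\int_{\partial\Omega}f\,\varphi_1(g)^{p+1}<0$, a contradiction, so no positive solution exists at $\lambda_1(g)$. Assertions (i) and (ii) for the ``$\underline\lambda>0$'' regime come from the $H^1$-bound $\sup_{\lambda\in(\underline\lambda,\lambda_1(g))}\|W_\lambda\|<\infty$ — proved as in Proposition~\ref{lem:bdd9mar21} by comparing $J_{\lambda_n}(W_n)$ with $\sup_{t>0}j_{t\hat w}$ for a fixed test function $\hat w$ with $\int_{\partial\Omega}f\hat w^{p+1}>0$ — followed by the elliptic bootstrap/Schauder argument of Corollary~\ref{cor:bif} to upgrade to $C^2(\overline\Omega)$ and force the $C^2$-limit at $\lambda_1(g)$ to be $0$ via the nonexistence result.

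The part requiring genuine new thought is the dichotomy (iii)/(iv) according to the sign of $\int_{\partial\Omega}f$, i.e.\ whether one may take $\underline\lambda=0$. Following the scaling $\eta_n=w_n/\|w_n\|$ as in Proposition~\ref{prop:bbelow} and Proposition~\ref{lem:bdd9mar21}, along a sequence $\lambda_n\to\lambda_\infty\in\{0,\lambda_1(g)\}$ one is forced into $\eta_\infty=$ const (if $\lambda_\infty=0$) or $\eta_\infty=c\varphi_1(g)$ (if $\lambda_\infty=\lambda_1(g)$), and then the identity $G_f(\eta_n)\to\int_{\partial\Omega}f\,\eta_\infty^{p+1}$ must vanish in the blow-up-of-$H^1$-norm scenario and be nonnegative in the collapse scenario. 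When $\int_{\partial\Omega}f<0$, the constant alternative gives $\int_{\partial\Omega}f\,\eta_\infty^{p+1}<0$, ruling out $\lambda_\infty=0$ in the blow-up case, so $\|W_\lambda\|$ stays bounded down to $\lambda=0$ and, in the collapse case, $0\le\int_{\partial\Omega}f\cdot(\text{const})^{p+1}$ is impossible, which is exactly the lower-bound statement (iii) (no bifurcation from $(\,0,0)$). When $\int_{\partial\Omega}f>0$, the constant alternative is now consistent, so small solutions can bifurcate from $(0,0)$: one shows $\|W_\lambda\|_{C^2(\overline\Omega)}\to0$ as $\lambda\searrow0$ by checking that no positive solution of \eqref{pr:w:f} survives at $\lambda=0$ — integrating $-\Delta w\cdot w^{-p}$ as in the first paragraph of the proof of Proposition~\ref{prop:non} gives $0=\int_\Omega\nabla w\cdot\nabla(w^{-p})-\int_{\partial\Omega}f<-\int_{\partial\Omega}f<0$ — and then the $H^1$-bound on $(\underline\lambda,\lambda_1(g))$ plus elliptic regularity forces the $C^2$-limit at $0$ to be a nonnegative solution at $\lambda=0$, hence $0$; the interior lower bound on $(\underline\lambda,\overline\lambda)$ is then automatic by compactness. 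I expect the main obstacle to be bookkeeping the two blow-up alternatives ($\lambda_\infty=0$ versus $\lambda_\infty=\lambda_1(g)$) simultaneously and making sure the sign of $\int_{\partial\Omega}f$ enters at precisely the right place in each; none of the individual estimates are new, but the case analysis is more delicate than in Theorem~\ref{thm1} because the coefficient of the nonlinearity and the coefficient of the linear boundary term are now decoupled.
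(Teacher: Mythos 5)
Your proposal is correct and coincides with what the paper intends: the paper gives no separate proof of Theorem \ref{thm:f1}, stating only that it "is evaluated in the same way as employed in this section," and your substitutions are exactly the right ones (the hypothesis $\int_{\partial\Omega}f\varphi_1(g)^{p+1}>0$ replacing $G(\varphi_1(g))>0$ in the fibering-map and blow-up arguments, the Picone identity with $\sigma_1=0$ yielding nonexistence only at $\lambda=\lambda_1(g)$, and the sign of $\int_{\partial\Omega}f$ deciding the constant alternative in the $\lambda_\infty=0$ limits for (iii) versus (iv)).
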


\begin{rem}
\strut 
\begin{enumerate} \setlength{\itemsep}{0.2cm} 

\item Assumption $\int_{\partial\Omega}f(x)\varphi_1(g)^{p+1}>0$ corresponds to $G(\varphi_1(g))>0$ for \eqref{pr:w}. 

\item The case of $\int_{\partial\Omega}f=0$ must be treated delicately. For instance, let $f$ satisfy \eqref{G} additionally. Then, we obtain assertion (i) with $\underline{\lambda}=0$, 
which is derived from the argument in the first paragraph of the proof of Lemma \ref{lem:vaw} below. Consequently, we obtain the conclusion of (iv) because \eqref{pr:w:f} has no positive solutions for $\lambda=0$ when $\int_{\partial\Omega}f\geq 0$. In case $\int_{\partial\Omega}f=0$, how $W_{\lambda}$ behaves as $\lambda \searrow 0$ is an open question. We cannot yet exclude the possibility that $\| W_{\lambda}\|_{C(\overline{\Omega})}\rightarrow \infty$ as $\lambda \searrow 0$.

\end{enumerate}
\end{rem}

\section{Proof of Theorem \ref{thm2}}

\label{sec:Pthm2}

In this section, we consider \eqref{pr:w} with $g=g_{\delta}$ introduced by \eqref{gdel}, and prove Theorem \ref{thm2}. Note that 
$\int_{\partial\Omega}g_{\delta}<0$. 

\subsection{Vanishing positive solutions}

\label{subsec:vpsol}

Let $g_{\delta}\in C^{1+\theta}(\partial\Omega)$, $\delta>\delta_{0}$, satisfy \eqref{gdel}. We then deduce 
\begin{align} \label{lam1del0}
\lambda_1(g_{\delta}) \longrightarrow 0 \ \mbox{ as } \ 
\delta\searrow \delta_{0}. 
\end{align}
Indeed, \eqref{lam1del0} is verified in a similar manner as in the proof of \cite[Lemma 6.6]{RQU2017}, using the condition that  $\int_{\partial\Omega}g_{\delta}\nearrow \int_{\partial\Omega}g_{\delta_{0}}=0$ as $\delta\searrow \delta_{0}$. 
Letting $\varphi_1(g_{\delta})$ be the positive eigenfunction associated 
with $\lambda_1(g_{\delta})$ such that $\| \varphi_1(g_{\delta})\|=1$, we additionally derive from \eqref{lam1del0} that 
\begin{align*}
\int_{\Omega}|\nabla \varphi_1(g_{\delta})|^2 = \lambda_1(g_{\delta})\int_{\partial\Omega}g_{\delta}(x)\varphi_1(g_{\delta})^2 
\longrightarrow 0 \quad \mbox{ as } \ \delta\searrow \delta_0. 
\end{align*} 
This implies that $\varphi_1(g_{\delta})$ converges to a constant $c>0$ in $H^1(\Omega)$ as $\delta\searrow \delta_0$, and by elliptic regularity, 
\begin{align} \label{vphi1gotoconst}
\varphi_1(g_{\delta}) \longrightarrow c \quad \mbox{ in } \ C(\overline{\Omega}) 
\quad \mbox{ as } \ \delta \searrow \delta_0. 
\end{align}

The next lemma is crucial (cf.\ Proposition \ref{prop:2nd}), which asserts that a positive solution of \eqref{pr:w} with $g=g_{\delta}$ shrinks to zero in $\lambda\in [0,\lambda_1(g_{\delta}))$ as $\delta\searrow \delta_{0}$. 

%
\begin{lem} \label{lem:vaw}
Let 
\begin{align*}
C_{\delta}:= 
&\sup\{ \| w \|_{C(\overline{\Omega})}: \mbox{$w$ is a positive solution of \eqref{pr:w}} \\ 
&\hspace*{4.5cm}\mbox{with $g=g_{\delta}$ for $\lambda\in [0,\lambda_1(g_{\delta}))$} \}. 
\end{align*}
Then, $C_{\delta}\rightarrow 0$ as $\delta\searrow \delta_{0}$.
\end{lem}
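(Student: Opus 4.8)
The plan is to argue by contradiction, exploiting the fact (from Theorem \ref{thm1}(iii)--(iv) and Proposition \ref{lem:bdd9mar21}) that for each fixed admissible $g$ the positive solutions are uniformly bounded in $H^1(\Omega)$, together with the collapse $\lambda_1(g_\delta)\to 0$ as $\delta\searrow\delta_0$ from \eqref{lam1del0}. Suppose $C_\delta\not\to 0$; then there are sequences $\delta_n\searrow\delta_0$, $\lambda_n\in[0,\lambda_1(g_{\delta_n}))$, and positive solutions $w_n$ of \eqref{pr:w} with $g=g_{\delta_n}$ at $\lambda=\lambda_n$ such that $\|w_n\|_{C(\overline\Omega)}\geq c_0>0$. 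Note $\lambda_n\to 0$ because $0\le\lambda_n<\lambda_1(g_{\delta_n})\to 0$. The first step is to show $\{w_n\}$ is bounded in $H^1(\Omega)$; I would rerun the Nehari/fibering estimate of Proposition \ref{lem:bdd9mar21}, using that $g_{\delta_n}^- = g^-$ is fixed and $g_{\delta_n}^+\le g^+$, so the test-function bound $J_{\lambda_n}(t_n\hat w)\le\sup_{t>0}\{\tfrac{t^2}{2}(\int_\Omega|\nabla\hat w|^2+\lambda_1(g_{\delta_n})\delta_n\int_{\partial\Omega}g^-\hat w^2)-\tfrac{t^{p+1}}{p+1}G_{\delta_n}(\hat w)\}$ stays bounded (here $\lambda_1(g_{\delta_n})\delta_n\to 0$ and $G_{\delta_n}(\hat w)\to\int_{\partial\Omega}g\,\hat w^{p+1}>0$ for fixed $\hat w$ supported in $\Gamma_+$), hence $E_{\lambda_n}(w_n)$ is bounded; coercivity (Lemma \ref{lem:coer}, applied on a fixed compact $\lambda$-interval away from the collapsing $\lambda_1(g_{\delta_n})$, or directly since $\lambda_n\to 0$) then gives $\|w_n\|\le C$.

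Having $\|w_n\|\le C$, pass to a weakly convergent subsequence $w_n\rightharpoonup w_\infty$ in $H^1(\Omega)$, strongly in $L^2(\Omega)$ and $L^{p+1}(\partial\Omega)$; since $g_{\delta_n}\to g_{\delta_0}=g^+$ uniformly on $\partial\Omega$, passing to the limit in the weak formulation of \eqref{pr:w} (with $\lambda_n\to 0$) shows $w_\infty\ge 0$ solves $-\Delta w_\infty=0$ in $\Omega$, $\partial w_\infty/\partial\nu = g_{\delta_0}(x)w_\infty^p$ on $\partial\Omega$, i.e. \eqref{pr:w} with $\lambda=0$ and $g=g_{\delta_0}$. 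But $\int_{\partial\Omega}g_{\delta_0}=\int_{\partial\Omega}g^+ \ge 0$ (indeed $>0$ since $g$ changes sign), so by Proposition \ref{prop:non} (applied with $g=g_{\delta_0}$, $\lambda=0$; recall $\lambda_1(g_{\delta_0})=0$) this problem has no positive solution, forcing $w_\infty\equiv 0$. The remaining step is to upgrade $w_n\to 0$ to the $C(\overline\Omega)$-norm and derive the contradiction with $\|w_n\|_{C(\overline\Omega)}\ge c_0$: from the $H^1$-bound and elliptic $L^r$/Schauder estimates as in \cite[Theorem 2.2]{Ro2005} (the bootstrap already invoked for Theorem \ref{thm1}(iii)), $\{w_n\}$ is bounded in $C^{2+\alpha}(\overline\Omega)$; by Arzelà--Ascoli a further subsequence converges in $C^2(\overline\Omega)$ to a nonnegative solution of the limit problem, which by the above must be $0$, contradicting $\|w_n\|_{C(\overline\Omega)}\ge c_0>0$.

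The main obstacle I anticipate is the uniformity of the a priori bounds as $\delta$ varies: one must check that the constants in the $H^1$-bound and in the elliptic-regularity bootstrap do not blow up as $\delta\searrow\delta_0$. The $H^1$-side is handled by the observation that $g_{\delta_n}^- $ is fixed and $\lambda_n\lambda_1(g_{\delta_n})\delta_n\to 0$, so the relevant bilinear form stays uniformly coercive on the range of $\lambda_n$; the bootstrap-side is handled because the nonlinear boundary coefficient $g_{\delta_n}$ is uniformly bounded in $C^{1+\theta}(\partial\Omega)$ (its $C^{1+\theta}$-norm is controlled by $\|g^+\|_{C^{1+\theta}}+\delta_n\|g^-\|_{C^{1+\theta}}$, bounded for $\delta_n$ near $\delta_0$). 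A secondary subtlety is that $\lambda_n$ ranges over $[0,\lambda_1(g_{\delta_n}))$ with the upper endpoint itself tending to $0$; this is actually favorable, since it means $\lambda_n\to 0$ automatically and we never approach a genuine bifurcation from $\{(\lambda,0)\}$ at positive $\lambda$ in the limit — the whole interval of parameters collapses to $\lambda=0$, where nonexistence is available.
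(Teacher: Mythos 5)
Your second half (extract a weak limit, identify it as a nonnegative solution of \eqref{pr:w} with $g=g_{\delta_0}$ at $\lambda=0$, kill it by nonexistence, and bootstrap up to $C(\overline{\Omega})$) matches the paper's proof. The genuine gap is in your first step, the uniform a priori bound. The quantity $C_{\delta}$ is a supremum over \emph{all} positive solutions, not just the variational minimizers of Proposition \ref{prop:exist} (and the downstream uses in Remark \ref{rem:uniq} and Proposition \ref{prop:2nd} require this generality). Rerunning the Nehari estimate of Proposition \ref{lem:bdd9mar21} only bounds $J_{\lambda}(w)$ from above when $w$ is a \emph{minimizer} on $\mathcal{N}_{\lambda}^{-}$; for an arbitrary positive solution one only knows $w\in\mathcal{N}_{\lambda}^{-}$, i.e.\ $E_{\lambda}(w)=G(w)$, which together with coercivity and the trace inequality yields a \emph{lower} bound on $\|w\|$, not an upper one. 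Moreover, even for minimizers your passage from bounded energy to bounded norm fails here: as $\delta\searrow\delta_0$ the admissible interval $[0,\lambda_1(g_{\delta}))$ collapses onto $\lambda=0$, where $E_0(w)=\int_{\Omega}|\nabla w|^2$ vanishes on constants, so there is no coercivity constant uniform in $(\delta,\lambda)$; and the contradiction used in Proposition \ref{lem:bdd9mar21} for $\lambda_\infty=0$, namely $0=G(\eta_0)=\eta_0^{p+1}\int_{\partial\Omega}g<0$, evaporates in the present setting because $\int_{\partial\Omega}g_{\delta_0}=0$. Your claim that ``the relevant bilinear form stays uniformly coercive on the range of $\lambda_n$'' is therefore not correct.

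The paper closes exactly this gap by a completely different mechanism, and it is here that the structural hypothesis \eqref{G} --- which your proposal never uses --- enters: since $w_n$ is harmonic, its maximum is attained on $\partial\Omega$, and Hopf's boundary point lemma forces $\frac{\partial w_n}{\partial\nu}(x_n)>0$ at a maximum point, hence $g_{\delta_n}(x_n)>0$, i.e.\ $x_n\in\Gamma_{+}$; compactness of $\Gamma_{+}$ as an $(N-1)$-dimensional submanifold guarantees $x_n\to x_\infty$ with $x_\infty$ \emph{interior} to $\Gamma_{+}$, and a blow-up argument (as in Kim--Liang--Shi) then rules out $\|w_n\|_{C(\overline{\Omega})}\to\infty$. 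The uniform $H^1$ bound is an immediate consequence of this $L^\infty$ bound via the weak formulation, not its source. A further (minor) slip: $g_{\delta_0}=g^{+}-\delta_0 g^{-}\neq g^{+}$; what you actually need, and what holds, is $\int_{\partial\Omega}g_{\delta_0}=0$, which suffices for Proposition \ref{prop:non} at $\lambda=0$ to force $w_\infty\equiv 0$.
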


\begin{proof}
Let $\overline{\delta}>\delta_{0}$ be fixed as close to $\delta_{0}$. 
First, we prove 
\begin{align}\label{ubddw} 
\sup_{\delta\in (\delta_{0},\overline{\delta})}C_{\delta} 
<\infty.    
\end{align}
If not, then there exist 
$\delta_n\in (\delta_{0},\overline{\delta})$ and 
positive solutions $w_n$ of \eqref{pr:w} 
with $g=g_{\delta_n}$ for $\lambda_n \in  [0,\lambda_1(g_{\delta_n}))$
such that $\| w_n \|_{C(\overline{\Omega})}\rightarrow \infty$. Since $w_n$ is harmonic in $\Omega$, and $g_{\delta_n}=g^{+}-\delta_n g^{-}$, we see  
\begin{align} \label{xnmax}
\| w_n \|_{C(\overline{\Omega})}=
\max_{\partial\Omega}w_n
= w_n(x_n), \quad x_n\in \Gamma_{+}. 
\end{align}
Here, we have used the fact that $\frac{\partial w_n}{\partial \nu}(x_n)>0$ by applying the strong maximum principle and boundary point lemma. 
Since $\Gamma_{+}$ is compact, up to a subsequence, 
$x_n \rightarrow x_\infty$ for some $x_\infty\in \Gamma_{+}$, 
as well as $x_{\infty}$ is an interior point of $\Gamma_{+}$. 
This leads us to a contradiction 
using the blow up argument as in the proof of Lemma 3.4 by Kim, Liang, and Shi \cite[Sect.\ 5]{KLS2015}. Assertion \eqref{ubddw} is thus proved. 
Immediately, we have \eqref{ubddw} with $\|\cdot  \|_{C(\overline{\Omega})}$ replaced by $\| \cdot\|$ because it is seen from \eqref{lam1del0} that $\lambda_1(g_\delta)$ is bounded above.

We then consider $\delta\searrow \delta_{0}$, and take a positive solution $w$ of \eqref{pr:w} with $g=g_{\delta}$ for $\lambda\in [0,\lambda_1(g_{\delta}))$. The boundedness of $w$ in $H^1(\Omega)$ infers that there exists $\delta_n\searrow \delta_{0}$ and $\lambda_n\in [0,\lambda_1(g_{\delta_n}))$ such that for some $w_{0}\in H^1(\Omega)$, $w_{n}\rightharpoonup w_{0}$, and $w_{n}\rightarrow w_{0}$ in $L^2(\Omega)$ and $L^{p+1}(\partial\Omega)$. We claim that $w_n \rightarrow 0$ in $H^1(\Omega)$. From the definition of $w_n$, we deduce 
\begin{align*}
\int_{\Omega}\nabla w_n \nabla \psi = \lambda_n  
\int_{\partial\Omega}g_{\delta_n}(x) w_n \psi + \int_{\partial\Omega} g_{\delta_n}(x) w_n^{p}\psi, \quad 
\psi \in H^1(\Omega), 
\end{align*}
thus, because of \eqref{lam1del0}, passing to the limit yields
\begin{align*}
\int_{\Omega}\nabla w_{0}\nabla \psi = 
\int_{\partial\Omega} g_{\delta_{0}}(x) w_{0}^{p}\psi. 
\end{align*}
This implies that $w_{0}$ is a nonnegative weak solution of \eqref{pr:w} with $g=g_{\delta_{0}}$ for  $\lambda=0$. Therefore, $w_{0}\equiv 0$ because  $\int_{\partial\Omega}g_{\delta_{0}}=0$; thus, 
$w_n\rightarrow 0$ in $L^{p+1}(\partial\Omega)$. 
Finally, we deduce 
\begin{align*}
\int_{\Omega}|\nabla w_n|^2=\lambda_n \int_{\partial\Omega}g_{\delta_{0}}w_n^2 + \int_{\partial\Omega}g_{\delta_n}(x)w_n^{p+1}\longrightarrow 0, 
\end{align*}
as desired.

By using the bootstrap argument with elliptic regularity, as in the proof of Corollary \ref{cor:bif}, we then deduce that $\|w_n\|_{W^{1,r}(\Omega)}\rightarrow 0$ for $r>N$. Sobolev's embedding theorem shows that $\| w_n\|_{C(\overline{\Omega})}\rightarrow 0$, as desired.  \end{proof}

\begin{rem} \label{rem:uniq}
Let $\delta>\delta_0$ be fixed. 
A positive solution of \eqref{pr:w} with $g=g_{\delta}$ is unique for $\lambda < \lambda_1(g_\delta)$ that is close to $\lambda_1(g_{\delta})$. 
This is verified by the combination of Proposition \ref{prop:non}, the fact that the upper bound $C_{\delta}$ introduced by Lemma \ref{lem:vaw} is finite for a fixed  $\delta>\delta_0$, with the existence of a unique local bifurcation curve of positive solutions from the simple eigenvalue $\lambda_1(g_{\delta})$. 
This assertion holds unconditionally for $\delta>\delta_0$. 
Indeed, let $w_n$ be a positive solution of \eqref{pr:w} with $g=g_\delta$ as $\lambda_n \nearrow \lambda_1(g_\delta)$. 
Similarly as in the proof of Corollary \ref{cor:bif}, we obtain that up to a subsequence, $w_n \rightarrow 0$ in $C^2(\overline{\Omega})$. Thus, the uniqueness is deduced. 
\end{rem}

\subsection{Our strategy} 

Let $w$ be a positive solution of \eqref{pr:w} with $g=g_{\delta}$ 
for $\lambda \in [0, \lambda_1(g_{\delta}))$. Proposition \ref{lem:inst} 
tells us that $w$ is unstable, meaning that the smallest eigenvalue $\gamma_1$ of \eqref{epro27apr} with $g=g_{\delta}$ for $(\lambda,w)$ is negative. We then look for a certain condition of $\delta$ under which it does {\it not} have a zero eigenvalue. More precisely, for the application of the implicit function theorem to all the $(\lambda,w)$,
we find such $\delta$ that is uniform in $(\lambda,w)$, based on the Fredholm alternative. Thus, we deduce assertions (i) to (iii) in Theorem \ref{thm2}, using the existence assertion of Theorem \ref{thm1} and the uniqueness assertion of Remark \ref{rem:uniq}. 

We then present our strategy more precisely in the general setting of \eqref{pr:w}. We consider 
the eigenvalue problem 
\begin{align} \label{mu:epr}
\begin{cases}
-\Delta \phi = 0 & \mbox{ in } \Omega, \\
\dfrac{\partial \phi}{\partial \nu}=\lambda g(x) 
\phi + \mu g(x) w^{p-1} \phi & \mbox{ on } \partial\Omega, 
\end{cases}
\end{align}
where $w$ is a positive solution of \eqref{pr:w} for $\lambda\geq 0$. We deduce that \eqref{epro27apr} has a zero eigenvalue if and only if $\mu=p$ is an eigenvalue of \eqref{mu:epr}. Therefore, let us study the distribution of the eigenvalues of \eqref{mu:epr}. As 
in \eqref{epro01}, an eigenvalue of \eqref{mu:epr} is called principal if the eigenfunctions associated with it have constant sign. We observe that \eqref{mu:epr} possesses exactly two principal eigenvalues $\mu_{1}^{\pm}\in \mathbb{R}$, which are both simple, satisfying 
\begin{align*}
\left\{ 
\begin{array}{ll}
\mu_1^{-}=0<1=\mu_1^{+},  &  \lambda=0, \\
\mu_1^{-}<0<1=\mu_1^{+},  &  \lambda\in (0, \lambda_1(g)).
\end{array} \right. 
\end{align*}
Indeed, $\mu=1$ is always a principal eigenvalue of \eqref{mu:epr} with the positive eigenfunction $\phi=w$ if $\lambda\in [0, \lambda_1(g))$. It is clear that $\mu_1^{-}=0$ if $\lambda=0$. Lemma \ref{lem:coer} provides that $\mu_{1}^{-}<0$ for $\lambda \in (0, \lambda_1(g))$. 
Moreover, by a similar argument as in \cite{BL80}, we obtain that an eigenvalue $\mu$ larger than $\mu_{1}^{+}$ is not principal. From these facts, we deduce that if $p<\mu_2^{+}$ for a second positive eigenvalue $\mu_{2}^{+}$ of \eqref{mu:epr}, then \eqref{epro27apr} has no zero eigenvalue, which is our desired situation (the implicit function theorem is applicable to $(\lambda, w)$).

\subsection{Analysis of the second eigenvalue} 

\label{subsec:second} 

We complete the proof of Theorem \ref{thm2} by proving the following result.

\begin{prop} \label{prop:2nd}
Let $g_{\delta}\in C^{1+\theta}(\partial\Omega)$ be introduced by 
\eqref{gdel}. For $\delta>\delta_{0}$, let $m_{\delta}\geq1$ be given by the formula 
\begin{align*}
m_{\delta}
= & \inf\{ \mu_{2}^{+} : \mbox{ $\mu_{2}^{+}$ is the second positive eigenvalue of \eqref{mu:epr} } \\
&\mbox{ for a positive solution $w$ of \eqref{pr:w} 
with $g=g_{\delta}$ for $\lambda\in [0,\lambda_1(g_{\delta}))$} \}. 
\end{align*}
Then, $m_{\delta} \rightarrow \infty$ as $\delta\searrow \delta_{0}$. 
\end{prop}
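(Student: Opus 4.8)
The plan is to argue by contradiction: suppose $m_{\delta} \not\to \infty$ as $\delta \searrow \delta_0$. Then there exist sequences $\delta_n \searrow \delta_0$, parameters $\lambda_n \in [0, \lambda_1(g_{\delta_n}))$, positive solutions $w_n$ of \eqref{pr:w} with $g = g_{\delta_n}$ at $\lambda = \lambda_n$, and a bounded sequence of second positive eigenvalues $\mu_{2,n}^{+} \leq M$ for some $M > 1$, with associated sign-changing eigenfunctions $\phi_n$ (normalized, say $\|\phi_n\| = 1$) solving \eqref{mu:epr} with $(\lambda, w, \mu) = (\lambda_n, w_n, \mu_{2,n}^{+})$. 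The goal is to pass to the limit and reach an absurdity. The key input is Lemma \ref{lem:vaw}: since $\delta_n \searrow \delta_0$, we have $C_{\delta_n} \to 0$, so $\|w_n\|_{C(\overline{\Omega})} \to 0$, and by the bootstrap/elliptic regularity argument used there, $\|w_n\|_{C^2(\overline{\Omega})} \to 0$ (or at least $\|w_n^{p-1}\|_{L^\infty(\partial\Omega)} \to 0$ suffices); also $\lambda_n \to 0$ by \eqref{lam1del0}.

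Given these facts, the eigenvalue equation \eqref{mu:epr} reads
\begin{align*}
\int_\Omega \nabla \phi_n \nabla \psi = \lambda_n \int_{\partial\Omega} g_{\delta_n} \phi_n \psi + \mu_{2,n}^{+} \int_{\partial\Omega} g_{\delta_n} w_n^{p-1} \phi_n \psi, \quad \psi \in H^1(\Omega).
\end{align*}
Taking $\psi = \phi_n$ and using $\lambda_n \to 0$, $\|w_n^{p-1}\|_{L^\infty(\partial\Omega)} \to 0$, $\mu_{2,n}^{+} \leq M$, together with the trace embedding $H^1(\Omega) \hookrightarrow L^2(\partial\Omega)$, gives $\int_\Omega |\nabla \phi_n|^2 \to 0$. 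Since $\|\phi_n\| = 1$, this forces $\phi_n \to \phi_\infty$ in $H^1(\Omega)$ (up to a subsequence, first weakly, then strongly once the gradient part vanishes and the $L^2(\Omega)$ part converges by compactness) with $\phi_\infty$ a nonzero constant. But a second positive eigenfunction must be $L^2(\Omega)$-orthogonal (with weight) — or at least linearly independent from — the first positive eigenfunction $w_n$; more to the point, $\phi_n$ changes sign on $\partial\Omega$ since $\mu_{2,n}^{+} > \mu_1^{+} = 1$ and eigenfunctions beyond the principal one are nonconstant-sign. I would make this precise via the standard orthogonality: the eigenfunction $\phi_n$ associated with $\mu_{2,n}^{+}$ satisfies $\int_{\partial\Omega} g_{\delta_n} w_n^{p-1} \phi_n w_n = 0$ in an appropriate weak bilinear pairing (since $w_n$ is the $\mu_1^{+} = 1$ eigenfunction and the problem is self-adjoint in the $\int_{\partial\Omega} g w^{p-1} \cdot$ inner product on the relevant subspace), which upon dividing and passing to the limit is incompatible with $\phi_n \to \text{const} \neq 0$ and the fact that $\varphi_1(g_{\delta_n}) \to c > 0$ by \eqref{vphi1gotoconst}. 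This sign-changing / orthogonality contradiction is what closes the argument.

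I expect the main obstacle to be making the orthogonality argument rigorous in the degenerate limit. The eigenvalue problem \eqref{mu:epr} is an indefinite weighted problem (the weight $g_{\delta_n} w_n^{p-1}$ on $\partial\Omega$ changes sign and, crucially, degenerates to zero uniformly), so the usual variational characterization of $\mu_2^{+}$ via a min-max over subspaces orthogonal to the first eigenfunction needs care: one should recast it as an eigenvalue problem for the compact operator $K_n : \phi \mapsto$ (harmonic extension of the Steklov-type problem with the weight), whose spectrum consists of the reciprocals of the eigenvalues, and track that the second positive eigenvalue of \eqref{mu:epr} corresponds to a specific eigenvalue of $K_n$ that stays bounded. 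The uniformity of the degeneration $\|w_n^{p-1}\|_{L^\infty(\partial\Omega)} \to 0$ is exactly what prevents $\mu_{2,n}^{+}$ from remaining bounded, since the weight against which we must "find room" for a second positive eigenvalue is disappearing. A secondary technical point is confirming that $\|w_n\|_{C(\overline\Omega)}\to 0$ upgrades to control of $w_n^{p-1}$ on the boundary uniformly — this follows from Lemma \ref{lem:vaw} directly. Once the operator-theoretic framing is set up, the limit passage is routine.
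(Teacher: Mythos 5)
Your setup and the core energy estimate coincide with the paper's proof: contradiction via a sequence $(\delta_n,\lambda_n,w_n)$ with $\mu_{2,n}^{+}\leq M$, the bound $\|w_n\|_{C(\overline\Omega)}\leq C_{\delta_n}\to 0$ from Lemma \ref{lem:vaw}, $\lambda_n\to 0$ from \eqref{lam1del0}, and the conclusion $\int_\Omega|\nabla\phi_n|^2\to 0$, hence $\phi_n\to c_\infty\neq 0$ (a constant) in $H^1(\Omega)$. Up to that point you are on the paper's track.

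The gap is in how you close the contradiction. The orthogonality relation you invoke, $\int_{\partial\Omega}g_{\delta_n}w_n^{p-1}\phi_n\,w_n=0$, is indeed valid (the problem is a symmetric generalized eigenvalue problem and $w_n$ is the $\mu=1$ eigenfunction), but it yields no contradiction in the limit. Normalizing $w_n$ and passing to the limit, both $\phi_n$ and $w_n/\|w_n\|$ tend to positive constants, so the relation degenerates to a constant multiple of $\int_{\partial\Omega}g_{\delta_0}$ --- which is exactly $0$ by the very definition of $\delta_0$ in \eqref{gdel}. The orthogonality is therefore asymptotically satisfied for trivial reasons and cannot rule out $\phi_n\to c_\infty\neq 0$. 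The correct closing step is the one you mention only in passing and then abandon: $\phi_n$ must change sign because $\mu_{2,n}^{+}>\mu_1^{+}=1$ is not a principal eigenvalue, whereas convergence to a nonzero constant forces $\phi_n$ to have constant sign for large $n$. To make that work you need more than $H^1$ convergence (an $H^1$-small perturbation of a constant can still change sign on a small boundary set); the paper upgrades $\phi_n\to c_\infty$ to convergence in $C(\overline\Omega)$ by writing $(-\Delta+1)(\phi_n-c_\infty)=\phi_n-c_\infty$ with the vanishing boundary flux, applying the $W^{1,r}$ estimate for $r>N$ and Sobolev embedding. No min-max or operator-theoretic characterization of $\mu_2^{+}$ is needed; only the fact that non-principal eigenfunctions change sign, which the paper has already recorded via the Brown--Lin argument.
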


\begin{proof}
We call $\phi_{2}$ the eigenfunction associated with $\mu_{2}^{+}$ and satisfying $\| \phi_{2} \|=1$, and $\phi_{2}$ changes sign. 
Assume by contradiction that $m_{\delta}<C$ as $\delta \searrow \delta_{0}$. We then obtain a sequence $\{ (\delta_{n}, \lambda_{n}, w_{n})\}$ such that $\delta_n\searrow \delta_{0}$, $\lambda_n \in [0, \lambda_1(g_{\delta_n}))$, $\mu_{2,n}^{+}:=\mu_{2}^{+}(\lambda_{n},w_{n})<C$, and $w_n$ is a positive solution of \eqref{pr:w} with $g=g_{\delta_{n}}$ for $\lambda_{n}\in  [0,\lambda_1(g_{\delta_{n}}))$. Thus, $\| w_{n}\|_{C(\overline{\Omega})}\leq C_{\delta_{n}}$ using $C_{\delta_{n}}$ given by Lemma \ref{lem:vaw} with $\delta=\delta_{n}$.  We then deduce 
\begin{align*}
\int_{\Omega}|\nabla \phi_{2,n}|^2 
&=\lambda_n \int_{\partial\Omega} g_{n}(x)\phi_{2,n}^2 + 
\mu_{2,n}^{+}\int_{\partial\Omega} g_{n}(x)w_{n}^{p-1}\phi_{2,n}^2 \\ 
&\leq \lambda_n \int_{\partial\Omega} g_{n}(x)\phi_{2,n}^2 + 
C C_{\delta_{n}}^{p-1}\int_{\partial\Omega} g_{n}^{+}(x)\phi_{2,n}^2, 
\end{align*}
where $\phi_{2,n}=\phi_{2}(\mu_{2,n}^{+})$ and $g_n=g_{\delta_n}$. 
By Lemma \ref{lem:vaw} and \eqref{lam1del0}, passing to the limit shows that $\int_{\Omega} |\nabla \phi_{2,n}|^2 \longrightarrow 0$. Since $\| \phi_{2,n} \|=1$, we infer that up to a subsequence and for a constant $c_{\infty}\neq 0$, 
\begin{align*}
&\phi_{2,n}\longrightarrow c_{\infty} \ \mbox{ in }  H^1(\Omega) 
\quad \mbox{(implying that this is the case in $L^{p+1}(\partial\Omega)$)}, \\
& \phi_{2,n} \longrightarrow c_{\infty} \ \mbox{ a.e.\ in $\Omega$ and $\partial\Omega$}. 
\end{align*}
 
We then deduce that 
\begin{align} \label{convC}
\phi_{2,n} \longrightarrow c_{\infty} \quad\mbox{ in } C(\overline{\Omega}). 
\end{align}
Once this is done, we obtain that $\phi_{2,n}$ has constant sign if $n$ is sufficiently large, which is the desired contradiction. Let us show how to deduce \eqref{convC}. 
Since $c_{\infty}$ is a constant, we infer 
\begin{align} \label{pr+1}
\begin{cases}
(-\Delta+1)(\phi_{2,n}-c_{\infty})
= \phi_{2,n}-c_{\infty} & \mbox{ in } 
\Omega, \\
\dfrac{\partial}{\partial \nu}(\phi_{2,n}-c_{\infty})
=\lambda_n g_{n}(x)\phi_{2,n}+\mu_{2,n}^{+}\, 
g_{n}(x)w_{n}^{p-1}\phi_{2,n} & \mbox{ on } \partial\Omega. 
\end{cases}
\end{align}
It should be noted that $\| \phi_{2,n} \|_{C(\overline{\Omega})}$ is bounded, 
based on the bootstrap argument used in the proof of Corollary \ref{cor:bif}. Thus, Lebesgue's dominated convergence theorem applies, and for a fixed $r>N$ we deduce 
\[ 
\phi_{2,n} - c_{\infty} \longrightarrow 0  \quad\mbox{ in } L^{r}(\Omega). 
\]
Similarly, using Lemma \ref{lem:vaw} and the assumption that $\lambda_n \to 0$ and $\mu_{2,n}^+$ is bounded, we deduce that 
\begin{align*} 
\lambda_n g_{n}(x)\phi_{2,n} + \mu_{2,n}^{+}\, g_{n}(x)w_{n}^{p-1}\phi_{2,n} 
\longrightarrow 0 \quad\mbox{ in } L^{r}(\partial\Omega). 
\end{align*} 
The $W^{1,r}$-estimate (\cite[(3.3)Proposition]{Am76N}) applies to \eqref{pr+1}, 
and we infer that 
\begin{align*} 
\phi_{2,n} - c_{\infty} \longrightarrow 0 \quad \mbox{ in } W^{1,r}(\Omega),  
\end{align*}
from which \eqref{convC} follows by Sobolev's embedding theorem. 
\end{proof}

\noindent{\it End of proof of Theorem \ref{thm2}.} \ 
From Proposition \ref{prop:2nd}, we find that 
given $p>1$ satisfying \eqref{psubcr}, 
it is possible to choose $\overline{\delta}>\delta_{0}$ 
such that if $\delta \in (\delta_{0},\overline{\delta})$, then 
$p<\mu_{2}^{+}$ for a positive solution $w$ of \eqref{pr:w} with 
$\lambda\in [0,\lambda_1(g_{\delta}))$. 
The proof of Theorem \ref{thm2} is now complete. \qed  

\begin{rem}
A similar strategy as in the proof of Theorem \ref{thm2} was applied in \cite{KRQU2020} to an indefinite sublinear elliptic equation with a Robin boundary condition. 
\end{rem}

We then use \eqref{change} to convert Theorem \ref{thm2} to \eqref{sppr}, 
which is our main result for \eqref{sppr}. 
\begin{theorem} \label{thm4} 
Let $g_{\delta}\in C^{1+\theta}(\partial\Omega)$ be given by \eqref{gdel}. 
If we take $\delta > \delta_0$ sufficiently close to $\delta_{0}$, then the positive solution set of \eqref{sppr} with $g=g_{\delta}$ 
for $\lambda \in (0,\lambda_1(g_{\delta}))$ is described as 
follows (see Figure \ref{figcurve}):  
\begin{enumerate} \setlength{\itemsep}{0.2cm} 

\item \eqref{sppr} with $g=g_{\delta}$ possesses a unique positive solution $v_{\lambda}$ for each $\lambda\in (0,\lambda_1(g_{\delta}))$, and the positive solution set $\{ (\lambda,v_{\lambda}) :  \lambda\in (0,\lambda_1(g_{\delta})) \}$ is represented by a smooth curve.
\item $v_{\lambda}\rightarrow 0$ in $C^2(\overline{\Omega})$ 
as $\lambda\nearrow \lambda_1(g_{\delta})$, i.e., bifurcation from $\{ (\lambda,0)\}$ at $(\lambda_1(g_{\delta}),0)$ occurs subcritically. 
\item $\lambda^{\frac{1}{p-1}}v_{\lambda}\rightarrow w_{0}$ 
in $C^2(\overline{\Omega})$ as $\lambda\searrow 0$ 
for some $w_{0}\in C^2(\overline{\Omega})$, where $w_{0}$ 
is a unique positive solution of \eqref{pr:w} with 
$g=g_{\delta}$ for $\lambda=0$. 
\end{enumerate}
\end{theorem}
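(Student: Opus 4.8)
The plan is to deduce Theorem \ref{thm4} from Theorem \ref{thm2} through the change of variables \eqref{change}. The first step is to record the elementary fact that, for each fixed $\lambda>0$, the substitution $w=\lambda^{1/(p-1)}v$ (equivalently, $v=\lambda^{-1/(p-1)}w$) sets up a bijection between the positive solutions $v\in C^{2+\alpha}(\overline{\Omega})$ of \eqref{sppr} and the positive solutions $w$ of \eqref{pr:w}, for the same $g$ and the same $\lambda$. Indeed, $-\Delta v=0$ is clearly equivalent to $-\Delta w=0$, and on $\partial\Omega$ the identity $1-\frac{p}{p-1}=-\frac{1}{p-1}$ forces $\lambda^{-1/(p-1)}w^p=\lambda\,(\lambda^{-1/(p-1)}w)^p$, so that $\frac{\partial w}{\partial\nu}=\lambda g(x)w+g(x)w^p$ is equivalent to $\frac{\partial v}{\partial\nu}=\lambda g(x)(v+v^p)$. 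Applying this with $g=g_{\delta}$ and $\delta$ close to $\delta_{0}$ as in Theorem \ref{thm2}, we conclude that for each $\lambda\in(0,\lambda_1(g_{\delta}))$ problem \eqref{sppr} has exactly one positive solution, namely $v_{\lambda}:=\lambda^{-1/(p-1)}w_{\lambda}$ where $w_{\lambda}$ is the unique positive solution of \eqref{pr:w} given by Theorem \ref{thm2}(i); and \eqref{sppr} has no positive solution for $\lambda\geq\lambda_1(g_{\delta})$, by Proposition \ref{prop:non} together with the same bijection.

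For assertion (i), it remains to transfer smoothness. By Theorem \ref{thm2}(i), $\lambda\mapsto w_{\lambda}$ is a smooth curve in $(0,\lambda_1(g_{\delta}))\times C^{2}(\overline{\Omega})$; since $\lambda\mapsto\lambda^{-1/(p-1)}$ is of class $C^{\infty}$ on $(0,\infty)$ and scalar multiplication is smooth on $C^{2}(\overline{\Omega})$, the curve $\lambda\mapsto(\lambda,v_{\lambda})=(\lambda,\lambda^{-1/(p-1)}w_{\lambda})$ is smooth on $(0,\lambda_1(g_{\delta}))$, giving (i). For assertion (ii), as $\lambda\nearrow\lambda_1(g_{\delta})>0$ the scalar factor $\lambda^{-1/(p-1)}$ converges to the finite constant $\lambda_1(g_{\delta})^{-1/(p-1)}$, while $w_{\lambda}\to0$ in $C^{2}(\overline{\Omega})$ by Theorem \ref{thm2}(ii); hence $v_{\lambda}=\lambda^{-1/(p-1)}w_{\lambda}\to0$ in $C^{2}(\overline{\Omega})$, which is the claimed subcritical bifurcation from $\{(\lambda,0)\}$ at $(\lambda_1(g_{\delta}),0)$. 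Assertion (iii) is immediate, being a restatement of Theorem \ref{thm2}(iii): $\lambda^{1/(p-1)}v_{\lambda}=w_{\lambda}\to w_{0}$ in $C^{2}(\overline{\Omega})$ as $\lambda\searrow0$, with $w_{0}$ the unique positive solution of \eqref{pr:w} with $g=g_{\delta}$ at $\lambda=0$.

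I do not anticipate a genuine obstacle here: the whole argument is the bijective rescaling \eqref{change} applied to Theorem \ref{thm2}, as already indicated in the discussion preceding the statement. The only point worth a word of care is that, although $\lambda^{-1/(p-1)}\to\infty$ as $\lambda\searrow0$, this scalar function is smooth and finite throughout the \emph{open} interval $(0,\lambda_1(g_{\delta}))$, so the smoothness of the solution curve on that interval is unaffected; what changes at the endpoint $\lambda=0$ is only the size of the branch, and in fact, since $w_{0}\not\equiv0$, one gets $\|v_{\lambda}\|_{C(\overline{\Omega})}=\lambda^{-1/(p-1)}\|w_{\lambda}\|_{C(\overline{\Omega})}\to\infty$ as $\lambda\searrow0$. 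This turns the bounded $w$-branch of Theorem \ref{thm2} into the unbounded $v$-branch bifurcating from the constant line, as depicted in Figure \ref{figcurve}.
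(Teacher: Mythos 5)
Your proposal is correct and follows exactly the paper's route: the paper proves Theorem \ref{thm4} simply by invoking the change of variables \eqref{change} to transfer Theorem \ref{thm2} (uniqueness, smoothness of the curve, and the two limiting behaviours) to \eqref{sppr}, which is precisely the bijective-rescaling argument you spell out. Your added remarks on the smoothness of the scalar factor $\lambda^{-1/(p-1)}$ on the open interval and on the resulting blow-up of $\|v_{\lambda}\|_{C(\overline{\Omega})}$ as $\lambda\searrow 0$ are consistent with the paper's discussion and fill in details the paper leaves implicit.
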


From Theorem \ref{thm:f1} (iv), similar results as those in Theorem \ref{thm2} can be deduced for \eqref{pr:w:f}.  Analogously to \eqref{defGam+}, we introduce for $f$  
the condition 
\begin{align} \label{f:pm}
\partial\Omega = \Gamma_{+}(f)\cup \Gamma_{-}(f) \quad \mbox{ and } \quad  \overline{\Gamma_{+}(f)}\cap \overline{\Gamma_{-}(f)}=\emptyset. 
\end{align}

Then, we have the following: 
\begin{theorem} \label{thm2:gf}
Let $f\in C^{1+\theta}(\partial\Omega)$ be a sign changing function that satisfies \eqref{f:pm} and the condition  $\int_{\partial\Omega}f>0$. 
Let $g_{\delta}\in C^{1+\theta}(\partial\Omega)$ be given by \eqref{gdel}. If $\delta > \delta_0$ is sufficiently close to $\delta_{0}$, then the positive solution set of \eqref{pr:w:f} with  $g=g_{\delta}$ for $\lambda \in (0,\lambda_1(g_{\delta}))$ is given as follows (see Figure \ref{figcurvefg}): 
\begin{enumerate} \setlength{\itemsep}{0.2cm} 
\item \eqref{pr:w:f} with $g=g_{\delta}$ possesses a unique positive solution $w_{\lambda}$ for every $\lambda\in (0,\lambda_1(g_{\delta}))$, and the positive solution set $\{ (\lambda,w_{\lambda}) :  \lambda\in (0,\lambda_1(g_{\delta})) \}$ is represented by a smooth curve, 
\item $w_{\lambda}\rightarrow 0$ in $C^2(\overline{\Omega})$ 
as $\lambda\nearrow \lambda_1(g_{\delta})$ and $\lambda\searrow 0$. 
\end{enumerate}
\end{theorem}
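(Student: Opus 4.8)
The plan is to mirror the proof architecture of Theorem \ref{thm2}, replacing the nonlinear coefficient $g_\delta$ in the superlinear term with $f$, and checking that the spectral analysis still goes through. First I would note that Theorem \ref{thm:f1} with $g=g_\delta$ applies: the hypothesis $\int_{\partial\Omega}f>0$ together with \eqref{f:pm} forces $\int_{\partial\Omega}f\varphi_1(g_\delta)^{p+1}>0$ (since $\varphi_1(g_\delta)>0$ in $\overline\Omega$), so \eqref{pr:w:f} has a positive solution $W_\lambda$ for every $\lambda\in(0,\lambda_1(g_\delta))$, none for $\lambda=\lambda_1(g_\delta)$, with $\|W_\lambda\|_{C^2(\overline\Omega)}\to0$ as $\lambda\nearrow\lambda_1(g_\delta)$ and as $\lambda\searrow0$ (assertion (iv) of Theorem \ref{thm:f1}, which requires $\int_{\partial\Omega}f\neq0$ — satisfied here). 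This already gives existence and the boundary behaviour in (ii); what remains is uniqueness and the smooth-curve statement in (i), plus extending to $\lambda=0$.

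The heart of the matter is the spectral step. For a positive solution $w$ of \eqref{pr:w:f} I would linearize: the relevant eigenvalue problem is \eqref{epro27apr} with the Neumann data $\lambda g_\delta(x)\phi + pf(x)w^{p-1}\phi+\gamma\phi$, and, as in the ``Our strategy'' subsection, this has a zero eigenvalue exactly when $\mu=p$ is an eigenvalue of the analogue of \eqref{mu:epr}, namely
\begin{align*}
\begin{cases}
-\Delta\phi=0 & \text{in }\Omega,\\
\dfrac{\partial\phi}{\partial\nu}=\lambda g_\delta(x)\phi+\mu f(x)w^{p-1}\phi & \text{on }\partial\Omega.
\end{cases}
\end{align*}
Since $f$ changes sign, this weighted problem again has two principal eigenvalues $\mu_1^-<0<\mu_1^+$ with $\mu=1$ principal (eigenfunction $w$) when $\lambda\in[0,\lambda_1(g_\delta))$; instability (Proposition \ref{lem:inst}, whose proof in fact only used $h''(w)>0$ on $\Gamma_+(f)$ and $h(w)>0$, so it adapts once one checks the sign bookkeeping with $f$ in place of $g$) gives $\mu_1^+=1<p$ is not an obstruction by itself — the point is to push the \emph{second} positive eigenvalue $\mu_2^+$ above $p$. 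Then I would prove the exact analogue of Proposition \ref{prop:2nd}: $m_\delta:=\inf\mu_2^+\to\infty$ as $\delta\searrow\delta_0$. The argument is the same contradiction/blow-down: if $\mu_{2,n}^+<C$ along $\delta_n\searrow\delta_0$, the Rayleigh quotient for the sign-changing eigenfunction $\phi_{2,n}$ (normalized in $H^1$) is controlled by $\lambda_n\int g_{\delta_n}\phi_{2,n}^2+C\,C_{\delta_n}^{p-1}\int f^+\phi_{2,n}^2$, which tends to $0$ by Lemma \ref{lem:vaw} and \eqref{lam1del0}; hence $\phi_{2,n}\to c_\infty\neq0$ in $H^1$, then in $C(\overline\Omega)$ by the same bootstrap with the $W^{1,r}$-estimate applied to $(-\Delta+1)(\phi_{2,n}-c_\infty)$, contradicting the sign change. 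Crucially, this requires Lemma \ref{lem:vaw} (uniform vanishing of positive solutions as $\delta\searrow\delta_0$) to hold for \eqref{pr:w:f}; I would check its proof carries over — the blow-up at an interior point of $\Gamma_+$ still applies because near such a point the boundary condition reads $\partial w/\partial\nu=f(x)w^p+\text{l.o.t.}$ with $f>0$, and the $H^1$-convergence to a weak solution of the $\lambda=0$, $\delta=\delta_0$ limit problem still forces the limit to vanish since $\int_{\partial\Omega}g_{\delta_0}=0$ and the superlinear term integrates against a fixed sign weight to zero only if $w_0\equiv0$; here one uses $\int_{\partial\Omega}f>0$ to rule out a nontrivial limit, via the identity $\int_\Omega|\nabla w_0|^2=\int_{\partial\Omega}f\,w_0^{p+1}$ combined with $w_0$ harmonic and the nonexistence for the pure-superlinear problem with $\int f\neq0$ having a sign constraint — this last point deserves care.

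Once $m_\delta\to\infty$ is established, for the given subcritical $p$ I choose $\overline\delta>\delta_0$ so that $p<\mu_2^+$ uniformly in $(\lambda,w)$ for $\delta\in(\delta_0,\overline\delta)$; then \eqref{epro27apr} never has a zero eigenvalue, so the IFT applies at every positive solution, giving local uniqueness and a local smooth curve. Combining with the global existence from Theorem \ref{thm:f1}, the bifurcation from $(\lambda_1(g_\delta),0)$ being from a simple eigenvalue (so the local branch there is unique, as in Remark \ref{rem:uniq}), and the a priori absence of secondary bifurcation or turning points (no zero eigenvalue anywhere), a connectedness argument shows the entire positive solution set for $\lambda\in(0,\lambda_1(g_\delta))$ is a single smooth curve — this is assertion (i). Assertion (ii) is the boundary behaviour already supplied by Theorem \ref{thm:f1}(iv), both as $\lambda\nearrow\lambda_1(g_\delta)$ and $\lambda\searrow0$. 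I expect the main obstacle to be the adaptation of Lemma \ref{lem:vaw} to the $f$-weighted nonlinearity: verifying that condition \eqref{f:pm} (which guarantees $\Gamma_+(f)$ and $\Gamma_-(f)$ are separated, so $\Gamma_+(f)$ is a compact submanifold without boundary — the analogue of \eqref{G} for $f$) is exactly what makes the interior-point blow-up argument of Kim--Liang--Shi go through, and that the limiting weak solution at $\delta_0$ is genuinely forced to be zero. The spectral perturbation (Proposition \ref{prop:2nd} analogue) and the IFT/global-curve assembly should then be routine given the machinery already in place.
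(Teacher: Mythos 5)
Your proposal follows essentially the same route as the paper: existence and the limiting behaviour in (ii) come straight from Theorem \ref{thm:f1} (assertions (ii) and (iv)), and (i) is obtained by rerunning the machinery of Section \ref{sec:Pthm2} (the analogues of Lemma \ref{lem:vaw} and Proposition \ref{prop:2nd}, then the IFT), the only genuinely new verification being that the maximum point $x_n$ of a blowing-up sequence of solutions lies in $\Gamma_{+}(f)$. The paper settles the very point you flag as ``the main obstacle'' with a one-line computation: by the BPL, $\frac{\partial w_n}{\partial \nu}(x_n)>0$, while \eqref{f:pm} makes $\Gamma_{-}(f)$ compact with $f\leq -c_0<0$ there, so $x_n\in \Gamma_{-}(f)$ would give $\frac{\partial w_n}{\partial \nu}(x_n)=\left( \lambda_n g_{\delta_n}(x_n)+f(x_n)w_n(x_n)^{p-1}\right) w_n(x_n)<0$ once $w_n(x_n)$ is large, a contradiction; after that the blow-up argument and the spectral analysis carry over verbatim.

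One correction: your justification of the hypothesis $\int_{\partial\Omega}f\varphi_1(g_{\delta})^{p+1}>0$ of Theorem \ref{thm:f1} is not valid. Since $f$ changes sign, the positivity of $\varphi_1(g_{\delta})$ together with $\int_{\partial\Omega}f>0$ does \emph{not} imply $\int_{\partial\Omega}f\varphi_1(g_{\delta})^{p+1}>0$; the eigenfunction could be large precisely where $f<0$. The correct argument, and the one the paper uses, is \eqref{vphi1gotoconst}: $\varphi_1(g_{\delta})$ converges to a positive constant $c$ as $\delta\searrow\delta_0$, so $\int_{\partial\Omega}f\varphi_1(g_{\delta})^{p+1}\rightarrow c^{p+1}\int_{\partial\Omega}f>0$, and the inequality therefore holds for $\delta$ sufficiently close to $\delta_0$ — which is all the theorem requires. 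This is an easily repaired slip, but as written your deduction would claim the hypothesis for every $\delta>\delta_0$, which is false in general.
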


\begin{proof} 
Because of \eqref{vphi1gotoconst}, we find that the condition $\int_{\partial\Omega}f>0$ is sufficient for getting $\int_{\partial\Omega} f(x) \varphi_1(g_{\delta})^{p+1}>0$ for $\delta > \delta_0$ close to $\delta_0$, thus ensuring the existence of the positive solution $W_{\lambda}$ of \eqref{pr:w:f} with $g=g_{\delta}$ for $\lambda\in (0, \lambda_1(g_{\delta}))$ by Theorem \ref{thm:f1}. 

Assertion (ii) is a direct consequence of assertions (ii) and (iv) of Theorem \ref{thm:f1}. 

To verify assertion (i), we will show that the assertion similar to \eqref{xnmax} holds. 
We can deduce that $x_n\in \Gamma_{+}(f)$ for $n$ large enough where $w_n(x_n)=\max_{\partial\Omega}w_n \rightarrow \infty$ as in \eqref{xnmax}, which is essential for our procedure. 
It should be noted that $\frac{\partial w_n}{\partial \nu}(x_n)>0$ by the strong maximum principle and boundary point lemma. By virtue of \eqref{f:pm}, if we assume $x_n \in \Gamma_{-}(f)$, then $f(x_n)<-c_0$ for some $c_0>0$, thus, for a sufficiently large $n$,  \begin{align*}
\frac{\partial w_n}{\partial \nu}(x_n) = \left( \lambda_{n} \, g_{\delta_{n}}(x_n) + f(x_n)\, (w_n(x_n))^{p-1} \right) w_n(x_n) 
< 0, 
\end{align*} 
which is a contradiction. The rest of the proof of assertion (i) follows the same line of argument in this section. 
\end{proof}
%
    \begin{figure}[!htb]
    \centering 
    \includegraphics[scale=0.16]{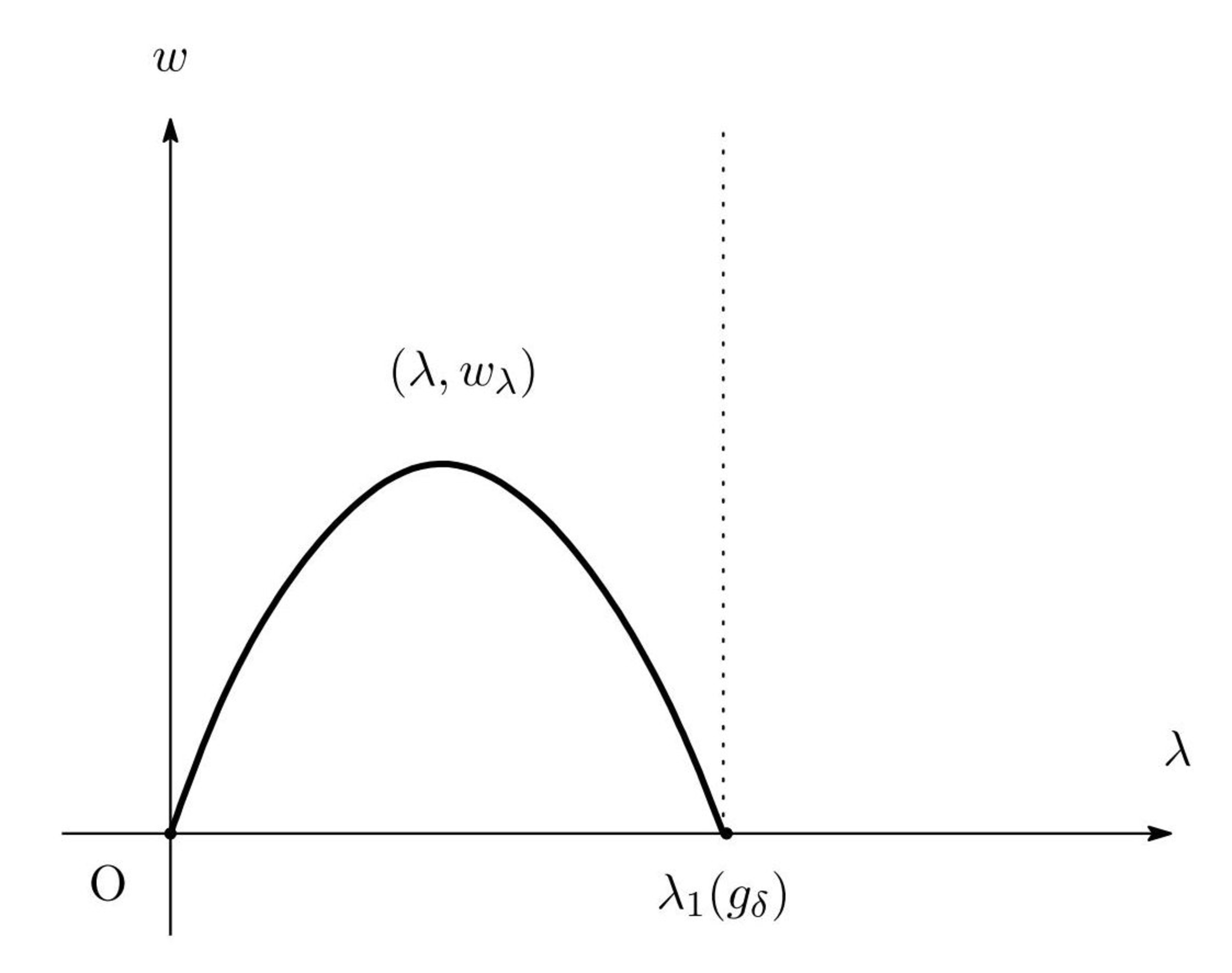} 
		  \caption{Positive solution set $\{ (\lambda,w_\lambda)\}$ of \eqref{pr:w:f} with $g=g_{\delta}$ as a smooth curve in the case that $\delta$ is close to $\delta_{0}$.}
		\label{figcurvefg} 
    \end{figure} 
%

\section{Applications to indefinite  logistic boundary conditions}

\label{sec:Appl}

Let $\Omega$ be a bounded domain of $\mathbb{R}^N$, $N\geq1$, 
with smooth boundary $\partial\Omega$. Consider nonnegative solutions of the problem 
\begin{align} \label{lpr}
\begin{cases}
-\Delta u = 0 & \mbox{ in } \Omega, \\
\dfrac{\partial u}{\partial \nu}  
= \lambda r(x)u(1-u) & \mbox{ on } \partial \Omega,  
\end{cases}
\end{align}
where $r\in C^{1+\theta}(\partial\Omega)$, $\theta\in (0,1)$, changes sign, 
$\lambda > 0$ is a parameter, and $\nu$ is the unit outer normal to $\partial\Omega$.  A motivation for our study of \eqref{lpr} arises in population genetics (\cite{Fl1975, BH90}). For previous works on the boundary version, we refer to \cite{M2009na, MN2011jde, KLS2015, MN2016jmaa}. 
Clearly, $u\equiv 0,1$ satisfies \eqref{lpr} for all $\lambda>0$, which are called \textit{constant solutions}, and  
\begin{align*}
& \{ (\lambda, 0)\}:= \{ (\lambda,u): \lambda>0, \; u\equiv 0\}, \\
& \{ (\lambda, 1)\}:= \{ (\lambda,u): \lambda>0, \; u\equiv 1\}
\end{align*}
are said 
to be the \textit{constant lines}. A positive solution  
$u \in C^{2+\alpha}(\overline{\Omega})$, $\alpha\in (0,1)$,  
of \eqref{lpr} is defined similarly. We may regard $(\lambda,u)$ as a positive solution of \eqref{lpr}. It should be noted that $(\lambda, 1)$ is a (constant) positive solution. 

In this section, we discuss the existence and uniqueness of \textit{nonconstant} positive solutions $u$ of \eqref{lpr}, which have been well studied for the case that 
$u\leq1$ in $\overline{\Omega}$ (\cite{M2009na, MN2011jde, KLS2015, MN2016jmaa}). In turn, our objective is to discuss the case of $u\not\leq 1$ in $\overline{\Omega}$, i.e., the case that $u>1$ somewhere in $\overline{\Omega}$. Such positive solutions are called \textit{large positive solutions}. 
Similar studies on large positive solutions are \cite{KB2000, DS2004, AB2006, RQS2015}, where the logistic type equation $-\Delta u = \lambda r(x)(u-u^p)$ in $\Omega$, $p>1$, is considered under linear Dirichlet, Neumann or Robin boundary conditions. For $\lambda<0$, $(\lambda, u)$ is a positive solution of \eqref{lpr} if and only if $(-\lambda,u)$ is a positive solution of \eqref{lpr} with $r$ replaced by $-r$, because of the symmetry $\lambda r(x) = (-\lambda)(-r(x))$. For $\lambda=0$, it is clear that $\{ (0,c) \}:= \{ (\lambda,u): \lambda=0, \; u\equiv c>0 \,  \mbox{ is a constant} \}$ is the positive solution set.

\subsection{Known results for positive solutions $u\leq1$}

\label{subsec:kruleq1}

In this subsection, we focus our consideration on nonnegative solutions $u$ of \eqref{lpr} such that $u\leq1$ in $\overline{\Omega}$, and summarize the known results from \cite{MN2011jde, MN2016jmaa}, as illustrated by Figure \ref{figuleq1m}. By applying the strong maximum principle and boundary point lemma, a nonconstant positive solution $u$ implies that $0<u<1$ in $\overline{\Omega}$. 

When $\int_{\partial\Omega}r\neq 0$, in view of positive solutions 
$u(x)\in (0,1)$ in $\overline{\Omega}$, it suffices to consider the case of  $\int_{\partial\Omega}r<0$ because all the results obtained for this case
can be converted automatically into the case of $\int_{\partial\Omega}r>0$. 
Indeed, using the change $U=1-u$, we transform \eqref{lpr} into the problem 
\begin{align*}
\begin{cases}
-\Delta U = 0 & \mbox{ in } \Omega, \\
\dfrac{\partial U}{\partial \nu} = \lambda (-r(x))U(1-U) & \mbox{ on } \partial\Omega.
\end{cases}
\end{align*}
Let us now assume that $\int_{\partial\Omega}r<0$. In terms of 
bifurcation from the constant line $\{ (\lambda,0)\}$, an important role is played by the positive principal eigenvalue $\lambda_1(r)>0$ of \eqref{epro01} with $g=r$. 
The local bifurcation theory from simple eigenvalues by  Crandall and Rabinowitz \cite{CR71} shows that a nonconstant positive solution $u$ of \eqref{lpr} bifurcates from 
$\{ (\lambda, 0)\}$ uniquely at $(\lambda_1(r),0)$, which is {\it supercritical}, i.e., in the direction $\lambda>\lambda_1(r)$ (see Figure \ref{figuleq1m} (i)). Then, by employing the implicit function theorem, $u=u_\lambda$ is parametrized smoothly by $\lambda\in (\lambda_1(r), \infty)$ (\cite[Theorems 2.5 and 2.8]{MN2011jde}). Since 
the bifurcating positive solution curve at $(\lambda_1(r),0)$ is unique, 
we see that $(\lambda, u_\lambda)$, $\lambda\in (\lambda_1(r),\infty)$, 
is a unique nonconstant positive solution of \eqref{lpr}  (\cite[Theorem 2.7]{MN2011jde}). 
For $\lambda\leq\lambda_1(r)$, \eqref{lpr} has no nonconstant positive solution (\cite[Theorem 2.6]{MN2011jde}). For the stability of the nonnegative solutions, $(\lambda,0)$ is asymptotically stable and unstable for $\lambda\leq\lambda_1(r)$ and $\lambda>\lambda_1(r)$, respectively, $(\lambda,1)$ is unstable for $\lambda>0$, and $(\lambda,u_{\lambda})$ is asymptotically stable for $\lambda>\lambda_1(r)$ (\cite[Theorem 3.1, 3.2 and 3.3]{MN2011jde}).

When $\int_{\partial\Omega}r=0$, it is proved by a bifurcation approach from $\{ (0,c)\}$ that 
\eqref{lpr} has a unique nonconstant positive solution $u_{\lambda}$ for each $\lambda>0$, which bifurcates from $\{ (0,c)\}$ at $c=\frac{1}{2}$ and is parametrized smoothly by $\lambda\in (0,\infty)$ (\cite[Theorem 1.1, Lemma 2.2]{MN2016jmaa}). For the stability of the nonnegative solutions, $(\lambda,0)$ and $(\lambda,1)$ are both unstable for $\lambda>0$, whereas $(\lambda,u_{\lambda})$ is asymptotically stable for $\lambda>0$ (\cite[Theorem 1.2]{MN2016jmaa}). 
%
		 \begin{figure}[!htb]
        \centering 	  	   
	\includegraphics[scale=0.15]{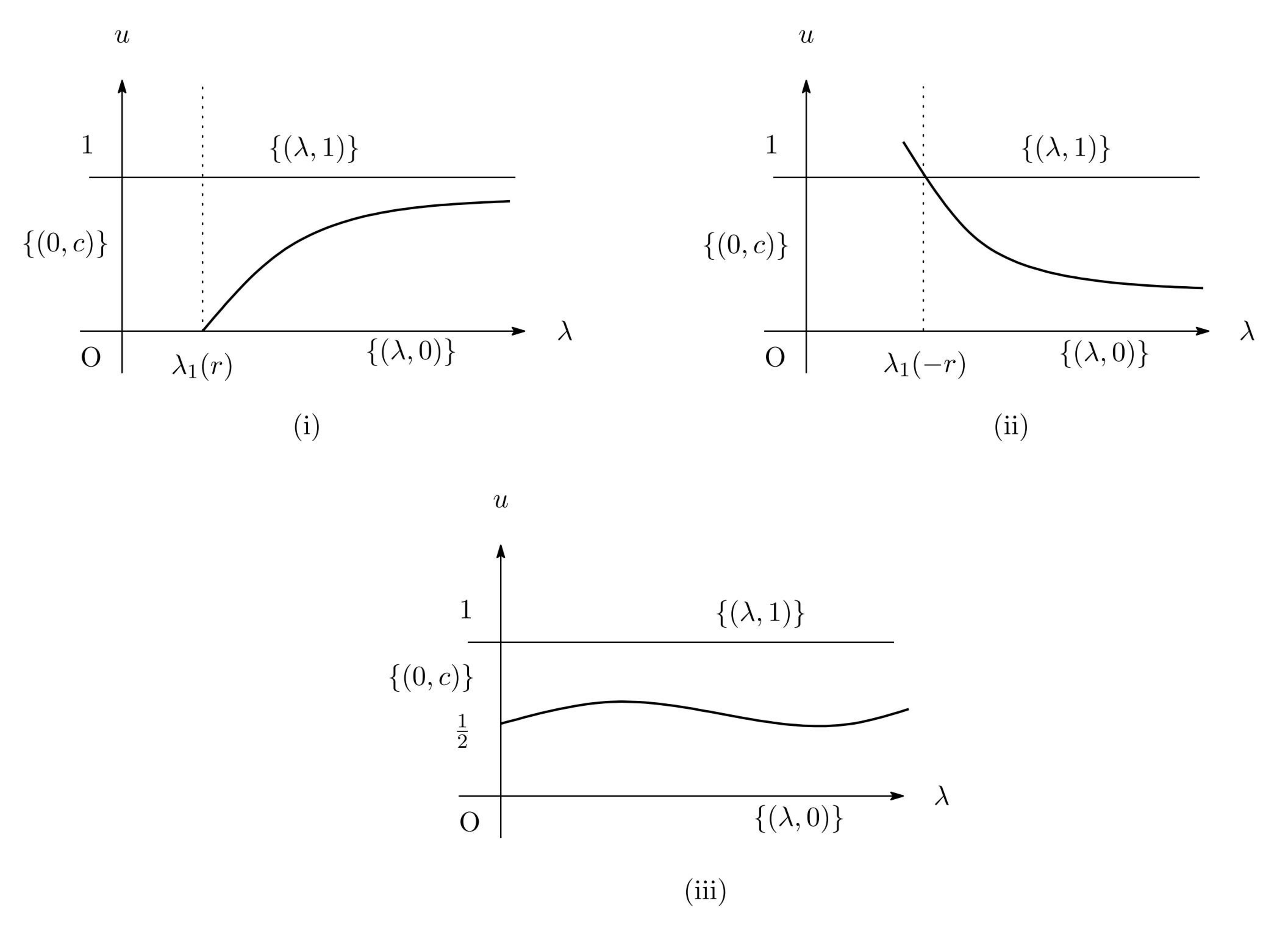} 
		  \caption{Cases (i) $\int_{\partial\Omega}r<0$, (ii) $\int_{\partial\Omega}r>0$, and (iii)  $\int_{\partial\Omega}r=0$.} 
		\label{figuleq1m} 
		    \end{figure}

To conclude this subsection, let us demonstrate our 
developing scenario presented in the next subsection for large positive solutions of \eqref{lpr}. When $\int_{\partial\Omega}r>0$, using the change $v=u-1$, 
another bifurcation approach from the constant line $\{ (\lambda,1)\}$ can be employed. We then transform \eqref{lpr} into \eqref{sppr} with $g=-r$ and $p=2$:
\begin{align} \label{lpr:v}
\begin{cases}
-\Delta v = 0 & \mbox{ in } \Omega, \\
\dfrac{\partial v}{\partial \nu} 
= \lambda (-r(x))(v+v^2) & \mbox{ on } \partial \Omega.  
\end{cases}
\end{align}
The local bifurcation theory also applies to \eqref{lpr:v} at $(\lambda_1(-r),0)$, and \eqref{lpr:v} possesses a smooth solution curve $\{ (\lambda(\sigma),v(\sigma)): \sigma\in (-\overline{\sigma},\overline{\sigma}) \}$,  $\overline{\sigma}>0$, 
which uniquely bifurcates at $(\lambda_1(-r),0)$, such that 
$(\lambda(0),v(0))=(\lambda_1(-r),0)$, 
$\lambda(\sigma)>\lambda_1(-r)$ and $v(\sigma)<0$ in $\overline{\Omega}$ for $\sigma\in (-\overline{\sigma},0)$, and  $\lambda(\sigma)<\lambda_1(-r)$ and $v(\sigma)>0$ in 
$\overline{\Omega}$ for $\sigma\in (0,\overline{\sigma})$. It should be noted that the curve $\{ (\lambda(\sigma), v(\sigma)): \sigma \in  (-\overline{\sigma},0)\}$ corresponds to the positive solutions of \eqref{lpr} with the condition that $u<1$ in $\overline{\Omega}$, whereas the curve $\{ (\lambda(\sigma), v(\sigma)): \sigma \in  (0,\overline{\sigma})\}$ corresponds to large positive solutions of \eqref{lpr} that we want to discuss. The latter result will be strengthened by employing Theorems \ref{thm3} and \ref{thm4}.

\subsection{Main results for large positive solutions $u\not\leq1$} 

In this subsection, we consider large positive solutions of \eqref{lpr}.  The large positive solution $u$ is divided into the two cases: 
\begin{enumerate} \setlength{\itemsep}{0.2cm} 
\item[(a)] $u\geq1$ in $\overline{\Omega}$,  and  
\item[(b)] $u\not\geq1$  in $\overline{\Omega}$ ($u<1$ somewhere in $\overline{\Omega}$). 
\end{enumerate}
Here, case (a) implies that $u>1$ in $\overline{\Omega}$, i.e., 
$(\lambda,v)$ with $v=u-1$ is a positive solution of \eqref{lpr:v}, 
whereas case (b) implies that $(\lambda, v)$ is a sign changing solution of \eqref{lpr:v}.

When $\int_{\partial\Omega}r>0$, 
the positive solution curve $\{ (\lambda(\sigma), v(\sigma)): \sigma \in  (0,\overline{\sigma})\}$ of \eqref{lpr:v} added to $(\lambda_1(-r),0)$ 
is extended globally in $\mathbb{R}\times C(\overline{\Omega})$ as a subcontinuum $\mathcal{C}_0$, using the global bifurcation result (\cite[Theorem 6.4.3]{LGbook}, \cite[Theorem 1.1]{Um2010}).  
Indeed, $\mathcal{C}_0\setminus \{ (\lambda_1(-r),0)\}$ does not meet any point on $\{ (\lambda, 0)\}$ by the uniqueness of the bifurcation point $(\lambda_1(-r),0)$. Moreover, equipped with $g=-r$, Lemma \ref{prop:non} and Proposition \ref{prop:bbelow} show that 
\begin{align*}
\{ \lambda\geq0 : (\lambda,v)\in \mathcal{C}_0 \setminus \{ (\lambda_1(-r),0)\} \} \subset (0,\lambda_1(-r)).     
\end{align*}
Particularly, $\mathcal{C}_0$ is unbounded and bifurcates from infinity at some $\lambda\in [0,\lambda_1(-r)]$. We then deduce that \eqref{lpr} possesses the unbounded subcontinuum 
\begin{align*}
\tilde{\mathcal{C}}_0=\{ (\lambda, u) : u=1+v, (\lambda,v)\in \mathcal{C}_0 \}
\end{align*}
where $\tilde{\mathcal{C}}_0$ bifurcates at $(\lambda_1(-r),1)$, and 
$\tilde{\mathcal{C}}_0\setminus \{ (\lambda_1(-r),1)\}$ consists of large positive solutions with condition (a). Our aim is to develop this global bifurcation result by employing Theorems \ref{thm3} and \ref{thm4}.

We then present our main results for large positive solutions of \eqref{lpr} in the case of $\int_{\partial\Omega}r>0$, where \eqref{psubcr} is assumed with $p=2$, i.e., $N=1,2,3$.

\begin{theorem} \label{thm:A1}
Let $N=1,2,3$. Suppose that $\int_{\partial\Omega}r>0$. Then, \eqref{lpr} possesses at least one large positive solution $u_{\lambda}\in C^2(\overline{\Omega})$ with condition (a) for each $\lambda\in (0,\lambda_1(-r))$. 
Moreover, there exist no large positive solutions of \eqref{lpr} with condition (a) for any $\lambda\geq \lambda_1(-r)$ nor with condition (b) for any $\lambda\in (0,\lambda_1(-r)]$. 
Additionally, the following {\rm five} assertions hold: 
\begin{enumerate} \setlength{\itemsep}{0.2cm} 
\item $u_{\lambda}$ is unstable. 
\item $\| u_{\lambda} \|_{C(\overline{\Omega})} \rightarrow \infty$  as $\lambda\searrow 0$. More precisely,  
there exists $0<c_1<c_2$ such that 
\begin{align} \label{asymptC}
c_1\lambda^{-1} \leq \| u_{\lambda} \|_{C(\overline{\Omega})}\leq c_2 \lambda^{-1} \ \ \mbox{ as } \ \ \lambda\searrow 0.  
\end{align}
\item $\sup_{\lambda\in (\underline{\lambda}, \, \lambda_1(-r))}\| u_{\lambda} \|_{C(\overline{\Omega})}<\infty$ for any $\underline{\lambda}\in (0, \lambda_1(-r))$. 
\item $\|u_{\lambda}-1\|_{C^2(\overline{\Omega})} \rightarrow 0$ as $\lambda\nearrow \lambda_1(-r)$. Particularly, $(\lambda, u_{\lambda})$ is the bifurcating positive solution on $\tilde{\mathcal{C}}_0$ for $\lambda < \lambda_1(-r)$ that is close to $\lambda_1(-r)$. 
\item $\inf_{\lambda\in (0, \, \overline{\lambda})}\| u_\lambda -1 \|_{C(\overline{\Omega})}>0$ for any $\overline{\lambda}\in (0,\lambda_1(-r))$. 
\end{enumerate}
\end{theorem}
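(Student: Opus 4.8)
The plan is to deduce Theorem \ref{thm:A1} from Theorems \ref{thm3} and \ref{thm4} via the substitution $v = u-1$, which transforms \eqref{lpr} into \eqref{lpr:v}, i.e.\ into \eqref{sppr} with $g = -r$ and $p = 2$. Note that $\int_{\partial\Omega} r > 0$ is precisely $\int_{\partial\Omega}(-r) < 0$, so the hypotheses of Theorems \ref{thm3} and \ref{thm4} apply with $g = -r$ and $\lambda_1(-r) = \lambda_1(g)$. Under this substitution, a large positive solution $u$ with condition (a), i.e.\ $u > 1$ in $\overline\Omega$, corresponds exactly to a positive solution $v = u-1$ of \eqref{lpr:v}; conversely any positive solution $v$ of \eqref{lpr:v} gives $u = 1 + v > 1$. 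So the existence of $u_\lambda$ with condition (a) for $\lambda \in (0,\lambda_1(-r))$, and the nonexistence for $\lambda \geq \lambda_1(-r)$, follow verbatim from the existence/nonexistence parts of Theorem \ref{thm3}, setting $u_\lambda = 1 + v_\lambda$.

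Next I would dispose of condition (b). A large positive solution $u$ with condition (b) corresponds to a sign-changing solution $v = u - 1$ of \eqref{lpr:v}; but $u$ is still a positive solution of \eqref{lpr}, hence $u > 0$ in $\overline\Omega$ by SMP/BPL, so $v = u - 1 > -1$. The claim is that no such sign-changing $v$ exists for $\lambda \in (0,\lambda_1(-r)]$. For this I would argue directly on \eqref{lpr}: if $u$ is a nonconstant nonnegative solution of \eqref{lpr} (harmonicity forces extrema on $\partial\Omega$), then by integrating the boundary condition, $0 = \int_{\partial\Omega} \partial u/\partial\nu = \lambda \int_{\partial\Omega} r(x) u(1-u)$. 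Combined with a Picone-type / eigenvalue argument paralleling Proposition \ref{prop:non}: testing against the principal eigenfunction $\Phi_1$ of the relevant linearization should force $\lambda > \lambda_1(-r)$, or more simply, one can invoke the nonexistence result \cite[Theorem 2.6]{MN2011jde} for the $U = 1-u$ problem (which handles $u < 1$) together with a large-solution estimate. Actually the cleanest route: observe that $u \not\leq 1$ and $u \not\geq 1$ means $u$ crosses $1$; but then $w := $ some suitably normalized quantity would violate the spectral inequality \eqref{sig1sign}. I expect this subcase to be the main obstacle and would want to pin down whether it follows from a short maximum-principle argument or needs its own Picone computation — the excerpt does not supply a ready-made statement covering sign-changing $v$ of \eqref{lpr:v}, only positive $v$.

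For the five numbered assertions I would translate the corresponding assertions of Theorem \ref{thm3} through $u_\lambda = 1 + v_\lambda$. Assertion (i), instability of $u_\lambda$: the linearization of \eqref{lpr} at $u_\lambda$ agrees with the linearization of \eqref{lpr:v} at $v_\lambda$ (the map $v \mapsto u$ is an affine translation, so it commutes with linearization and with the spectrum of \eqref{epro27apr}), hence instability of $u_\lambda$ is exactly instability of $v_\lambda$, which is Theorem \ref{thm3}(i). Assertion (iv), $\|u_\lambda - 1\|_{C^2(\overline\Omega)} = \|v_\lambda\|_{C^2(\overline\Omega)} \to 0$ as $\lambda \nearrow \lambda_1(-r)$, is Theorem \ref{thm3}(ii); the identification with the bifurcating branch $\tilde{\mathcal C}_0$ near $(\lambda_1(-r),1)$ follows because the local bifurcation from the simple eigenvalue $\lambda_1(-r)$ is unique (Crandall--Rabinowitz, as recalled in Subsection \ref{subsec:kruleq1}), and $(\lambda, 1+v_\lambda)$ for $\lambda$ near $\lambda_1(-r)$ is a positive solution converging to $(\lambda_1(-r),1)$, hence must coincide with it. Assertion (v), $\inf_{\lambda \in (0,\overline\lambda)} \|u_\lambda - 1\|_{C(\overline\Omega)} = \inf \|v_\lambda\|_{C(\overline\Omega)} > 0$, is Theorem \ref{thm3}(iv).

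Finally, assertions (ii) and (iii) on the $C(\overline\Omega)$-norm. Since $u_\lambda = 1 + v_\lambda$ with $v_\lambda > 0$, we have $\|u_\lambda\|_{C(\overline\Omega)} = 1 + \|v_\lambda\|_{C(\overline\Omega)}$, so $\|u_\lambda\|_{C(\overline\Omega)} \to \infty$ as $\lambda \searrow 0$ iff $\|v_\lambda\|_{C(\overline\Omega)} \to \infty$; and the two-sided bound \eqref{asymptC} with $p-1 = 1$ follows directly from the asymptotics $c_1 \lambda^{-1/(p-1)} \le \|v_\lambda\|_{C(\overline\Omega)} \le c_2 \lambda^{-1/(p-1)}$ in Theorem \ref{thm3}(iii), after absorbing the additive $1$ into the constants (shrink $c_1$, enlarge $c_2$; valid for $\lambda$ small since $\lambda^{-1} \to \infty$). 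Likewise Theorem \ref{thm3}(iii) gives $\sup_{\lambda \in (\underline\lambda, \lambda_1(-r))}\|v_\lambda\|_{C(\overline\Omega)} < \infty$, hence assertion (iii). Throughout, one should note that Theorem \ref{thm3} only guarantees existence of \emph{some} positive solution $v_\lambda$ (not uniqueness), which matches the "at least one" phrasing of Theorem \ref{thm:A1}; the uniqueness refinement under $\delta$ close to $\delta_0$ would come separately from Theorem \ref{thm4}. The only genuinely new work beyond bookkeeping is the condition-(b) nonexistence, which I would handle with a Picone/eigenvalue argument modeled on the proof of Proposition \ref{prop:non}.
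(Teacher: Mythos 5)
Your translation of the existence, the condition-(a) nonexistence, and assertions (i)--(v) through $u_\lambda = 1+v_\lambda$ is correct and is exactly how the paper handles them. The genuine gap is the one you yourself flag: the nonexistence of large positive solutions with condition (b) for $\lambda\in(0,\lambda_1(-r)]$. None of the routes you sketch closes it. A Picone computation in the style of Proposition \ref{prop:non} requires dividing by the solution (one tests with $\Phi_1^{p}/w^{p-1}$-type quantities), so it breaks down precisely because $v=u-1$ changes sign; and merely integrating the boundary condition gives $\int_{\partial\Omega} r\,u(1-u)=0$, which carries no contradiction since $u(1-u)$ itself changes sign. So this subcase does need its own argument, and it is not a Picone one.

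The paper's idea is a sub- and supersolution truncation. Given a large positive solution $u$ with condition (b) for some $\lambda\in(0,\lambda_1(-r)]$, set $\underline{u}:=\min(u,1)$; this is a supersolution of \eqref{lpr} with $0<\underline{u}\le 1$ and $\underline{u}\not\equiv 1$. Since $u\equiv 0$ is an unstable constant solution for every $\lambda>0$, one can build a small positive subsolution below $\underline{u}$. The sub- and supersolution method then yields a nonconstant positive solution $u_1\le 1$ of \eqref{lpr} for that same $\lambda$ --- and this contradicts the known nonexistence of nonconstant positive solutions with $0<u<1$ for $\lambda\le\lambda_1(-r)$ (\cite[Theorem 2.6]{MN2011jde}, transported to the case $\int_{\partial\Omega}r>0$ via $U=1-u$). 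You correctly guessed that \cite[Theorem 2.6]{MN2011jde} supplies the final contradiction, but the missing ingredient is the truncation $\min(u,1)$ producing an ordered sub/supersolution pair; without it there is no bridge from a solution crossing $1$ to a solution below $1$.
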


The next theorem is a direct consequence of Theorems \ref{thm4} and \ref{thm:A1}. 

\begin{theorem} \label{thm:A2}
Let $N=1,2,3$. 
Let $r\in C^{1+\theta}(\partial\Omega)$ be such that \eqref{G} holds with $g=-r$, and let $r_{\delta}=\delta r^{+}-r^{-}$,  $\delta>\delta_0:=\frac{\int_{\partial\Omega}r^-}{\int_{\partial\Omega}r^+}$, be a function in $C^{1+\theta}(\partial\Omega)$. 
Then, the large positive solution $(\lambda ,u_\lambda)$ of \eqref{lpr} with $r=r_{\delta}$, given by Theorem \ref{thm:A1}, satisfies the following {\rm three} properties, provided that 
$\delta > \delta_0$ is sufficiently close to $\delta_{0}$ (see Figure \ref{figexact}):  

\begin{enumerate} \setlength{\itemsep}{0.2cm} 
\item $u_{\lambda}$ is a unique large positive solution (and consequently a unique nonconstant positive solution), 
and the positive solution set $\{ (\lambda,u_{\lambda}) : \lambda\in (0,\lambda_1(-r_{\delta})) \}$ is represented by a smooth curve. 
\item $u_{\lambda}\rightarrow 1$ in $C^2(\overline{\Omega})$ 
as $\lambda\nearrow \lambda_1(-r_{\delta})$. 
\item $\lambda u_{\lambda} \rightarrow w_{0}$ 
in $C^2(\overline{\Omega})$ as $\lambda\searrow 0$ 
for some $w_{0}\in C^2(\overline{\Omega})$, where $w_{0}$ 
is a unique positive solution of \eqref{pr:w} with 
$g=-r_{\delta}$ and $p=2$ for $\lambda=0$. 

\end{enumerate}
\end{theorem}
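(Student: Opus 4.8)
The plan is to obtain Theorem \ref{thm:A2} as a translation of Theorem \ref{thm4} (equivalently Theorem \ref{thm2} via \eqref{change}) together with Theorem \ref{thm:A1}, using the substitution $v = u - 1$ that already turns \eqref{lpr} into \eqref{lpr:v}. First I would set up the dictionary. A large positive solution $u$ of \eqref{lpr} with condition (a) (that is, $u > 1$ in $\overline{\Omega}$) corresponds, via $v = u - 1$, bijectively to a positive solution $v$ of \eqref{lpr:v}; and \eqref{lpr:v} is exactly \eqref{sppr} with $g = -r$ and $p = 2$, which is subcritical precisely because $N \le 3$. Applying \eqref{gdel} to $g = -r$, and using $(-r)^{+} = r^{-}$, $(-r)^{-} = r^{+}$, one finds $(-r)_{\delta} = r^{-} - \delta r^{+} = -r_{\delta}$, with threshold $\delta_{0} = \int_{\partial\Omega}(-r)^{+}/\int_{\partial\Omega}(-r)^{-} = \int_{\partial\Omega}r^{-}/\int_{\partial\Omega}r^{+}$ as in the statement; likewise, the assumption that $r$ is under \eqref{G} with $g = -r$ says exactly that $\Gamma_{+}(-r) = \Gamma_{-}(r)$ is a compact $(N-1)$-submanifold of $\partial\Omega$. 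Hence Theorem \ref{thm4} (its \eqref{sppr}-version) applies verbatim with $g_{\delta} := -r_{\delta} = (-r)_{\delta}$.

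I would then push the conclusions of Theorem \ref{thm4} through $u_{\lambda} = 1 + v_{\lambda}$. For $\delta$ close enough to $\delta_{0}$, Theorem \ref{thm4} gives, for each $\lambda \in (0,\lambda_1(-r_{\delta}))$, a unique positive solution $v_{\lambda}$ of \eqref{sppr} with $g = -r_{\delta}$, with $\{(\lambda,v_{\lambda})\}$ a smooth curve, $v_{\lambda} \to 0$ in $C^{2}(\overline{\Omega})$ as $\lambda \nearrow \lambda_1(-r_{\delta})$, and $\lambda^{1/(p-1)}v_{\lambda} = \lambda v_{\lambda} \to w_{0}$ in $C^{2}(\overline{\Omega})$ as $\lambda \searrow 0$, where $w_{0}$ is the unique positive solution of \eqref{pr:w} with $g = -r_{\delta}$, $p = 2$ at $\lambda = 0$. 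Since $u_{\lambda} = 1 + v_{\lambda}$ is precisely the large positive solution furnished by Theorem \ref{thm:A1} (both being the same positive solution of \eqref{lpr:v}), assertion (ii) follows at once, and (iii) follows from $\lambda u_{\lambda} = \lambda + \lambda v_{\lambda}$ together with the fact that the constant function $\lambda$ tends to $0$ in $C^{2}(\overline{\Omega})$. The smoothness of $\{(\lambda,u_{\lambda})\}$ is inherited from that of $\{(\lambda,v_{\lambda})\}$.

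For assertion (i), uniqueness of $v_{\lambda}$ immediately gives uniqueness of the large positive solution of \eqref{lpr} with condition (a). To upgrade this to uniqueness of the nonconstant positive solution on $(0,\lambda_1(-r_{\delta}))$, I would note that, by the SMP and BPL, any nonconstant positive solution $u$ is either $0 < u < 1$ in $\overline{\Omega}$ or large; a large one is of type (a) or (b), and Theorem \ref{thm:A1} rules out type (b) for $\lambda \in (0,\lambda_1(-r_{\delta})]$, while the known results recalled in Subsection \ref{subsec:kruleq1} (applied after the change $U = 1 - u$, legitimate since $\int_{\partial\Omega}r_{\delta} > 0$) rule out the case $0 < u < 1$ for $\lambda \le \lambda_1(-r_{\delta})$. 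Thus $u_{\lambda}$ is the unique nonconstant positive solution.

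I do not anticipate a genuine obstacle: the hard analysis (the spectral estimates behind Proposition \ref{prop:2nd}, hence Theorem \ref{thm2}) is already done, and what remains is essentially bookkeeping. The two points that need care are the sign conjugation showing that $-r_{\delta}$ is literally the function $(-r)_{\delta}$ produced by \eqref{gdel} from $g = -r$ (so that Theorem \ref{thm4} can be invoked without restatement), and the short argument combining Theorem \ref{thm:A1} with the $u \le 1$ theory of Subsection \ref{subsec:kruleq1} needed to promote ``unique large positive solution with condition (a)'' to ``unique nonconstant positive solution.''
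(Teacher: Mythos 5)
Your proposal is correct and follows exactly the route the paper intends: the paper offers no written proof beyond declaring Theorem \ref{thm:A2} ``a direct consequence of Theorems \ref{thm4} and \ref{thm:A1},'' and your argument is a faithful unwinding of that, including the two points that genuinely need checking (the sign bookkeeping $(-r)_{\delta}=-r_{\delta}$ with the matching threshold $\delta_0$, and the upgrade from uniqueness among condition-(a) solutions to uniqueness among all nonconstant positive solutions via Theorem \ref{thm:A1} and the $u\le 1$ theory of Subsection \ref{subsec:kruleq1}). No gaps.
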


\begin{rem} 
Without any smallness condition on $\delta-\delta_0$, \eqref{lpr} with $r=r_{\delta}$ has a {\it unique} nonconstant positive solution $u_\lambda$ for $\lambda \neq \lambda_1(-r_\delta)$ but near $\lambda_1(-r_{\delta})$, satisfying $0<u_{\lambda}<1$ in $\overline{\Omega}$ for $\lambda>\lambda_1(-r_{\delta})$ and $u_{\lambda}>1$ in $\overline{\Omega}$ for $\lambda<\lambda_1(-r_{\delta})$. Moreover, there is no nonconstant positive solution of \eqref{lpr} for $\lambda=\lambda_1(-r_{\delta})$ (cf.\ Remark \ref {rem:uniq}). 
\end{rem}
%
		 \begin{figure}[!htb]
        \centering 	  	   	
        \includegraphics[scale=0.15]{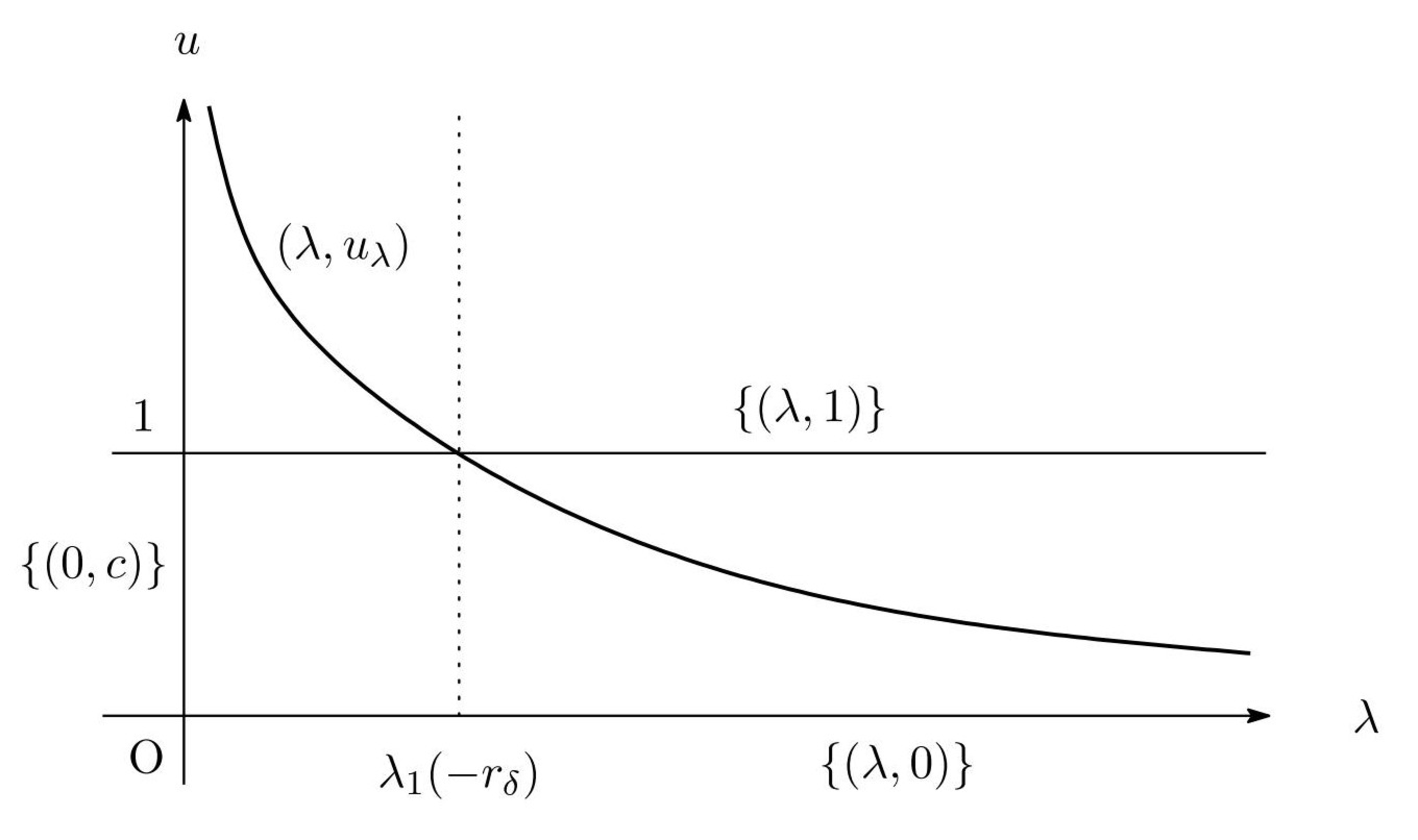} 
		  \caption{Positive solution curve of \eqref{lpr} for $r=r_{\delta}$ with $\delta$ larger than but close to $\delta_0$. }
		\label{figexact} 
		    \end{figure} 

When $\int_{\partial\Omega}r<0$, motivated by \eqref{asymptC}, 
the following asymptotics condition for positive solutions of \eqref{lpr} 
is introduced when $\lambda\searrow 0$: there exists $c_0>0$ such that 
\begin{align} \label{asymptA}
\| u_n \|_{C(\overline{\Omega})}\leq c_0 \lambda_n^{-1}, \quad n\to \infty,  
\end{align}
for a sequence $\{ (\lambda_n, u_n) \}$ of positive solutions of \eqref{lpr} that satisfies $\| u_n  \|_{C(\overline{\Omega})}\rightarrow \infty$ as $\lambda_n\searrow 0$. 

We then state our main result for the case of $\int_{\partial\Omega}r\leq0$. 

\begin{theorem} \label{thm:A3}
Let $N\geq1$ be arbitrary. Assume that $\int_{\partial\Omega}r\leq0$. 
Then, \eqref{lpr} has no large positive solution with condition (a) for any $\lambda>0$. If $\int_{\partial\Omega}r<0$, then additionally assuming $N=1,2,3$ and \eqref{asymptA} provides that $(\lambda, 1)$ is the only positive solution of \eqref{lpr} for small $\lambda>0$. 
\end{theorem}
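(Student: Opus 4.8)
The plan is to treat the two claims separately, both by translating the statement about \eqref{lpr} into a statement about \eqref{lpr:v} (equivalently \eqref{sppr} with $g=-r$, $p=2$) via $v=u-1$ and then invoking the machinery already developed. For the first claim, suppose $\int_{\partial\Omega}r\leq 0$ and $u$ is a large positive solution with condition (a), i.e.\ $u>1$ in $\overline{\Omega}$; then $v=u-1$ is a positive solution of \eqref{lpr:v}, hence of \eqref{sppr} with $g=-r$ and $p=2$. Now $\int_{\partial\Omega}(-r)=-\int_{\partial\Omega}r\geq 0$, so Proposition \ref{prop:non} (applied to \eqref{pr:w} with $g=-r$, which is equivalent to \eqref{sppr} via the scaling \eqref{change}) shows that no positive solution exists for any $\lambda>0$, since in the case $\int_{\partial\Omega}g\geq 0$ one has $\lambda_1(g)=0$ and Proposition \ref{prop:non} rules out positive solutions for all $\lambda\geq 0=\lambda_1(g)$. (For $\lambda=0$ I would note the divergence-theorem identity in the first paragraph of the proof of Proposition \ref{prop:non} applies verbatim.) This gives the first assertion for all $N\geq 1$, since Proposition \ref{prop:non} is stated without the subcriticality restriction.

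For the second claim, assume $\int_{\partial\Omega}r<0$, so $\int_{\partial\Omega}(-r)>0$ and $\lambda_1(-r)=0$; again Proposition \ref{prop:non} forbids any positive solution of \eqref{lpr:v} for $\lambda\geq 0$, so \eqref{lpr} has no large positive solution with condition (a) for $\lambda>0$ (consistent with the first claim), and we must rule out the other possibilities for small $\lambda>0$: nonconstant positive solutions with $u\leq 1$, and large positive solutions with condition (b). For the former, I would invoke the known results recalled in Subsection \ref{subsec:kruleq1}: when $\int_{\partial\Omega}r<0$, \cite[Theorem 2.6]{MN2011jde} says \eqref{lpr} has no nonconstant positive solution with $u\leq 1$ for $\lambda\leq\lambda_1(r)$, and in particular none for small $\lambda>0$ (here $\lambda_1(r)>0$). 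The remaining and genuinely new case is large positive solutions with condition (b), i.e.\ sign-changing solutions $v$ of \eqref{lpr:v} with $v>-1$ in $\overline{\Omega}$ and $\sup v>0$; this is where the dimensional restriction $N=1,2,3$ (so $p=2$ is subcritical) and the asymptotics hypothesis \eqref{asymptA} enter.

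The core argument for condition (b) is a contradiction/blow-up argument. Suppose $\{(\lambda_n,u_n)\}$ is a sequence of large positive solutions of \eqref{lpr} with condition (b) and $\lambda_n\searrow 0$. If $\|u_n\|_{C(\overline{\Omega})}$ stays bounded, then elliptic regularity (Schauder estimates, as in the proof of Corollary \ref{cor:bif} and Lemma \ref{lem:vaw}) gives, up to a subsequence, $u_n\to u_\infty$ in $C^2(\overline{\Omega})$ with $u_\infty$ a nonnegative solution of \eqref{lpr} for $\lambda=0$; by harmonicity and the boundary condition $\partial u_\infty/\partial\nu=0$, $u_\infty$ is a nonnegative constant $c$, and since $u_n\not\geq 1$ somewhere we get $c\leq 1$, while $c$ could also be $0$—in either case, for large $n$, $u_n$ is close in $C^2$ to a constant in $[0,1]$, which (by a standard IFT/linearization argument near the stable constant solutions $0$ and $1$, or by directly appealing to the nonexistence of nonconstant solutions near the constants) forces $u_n$ itself to be a constant, contradicting nonconstancy. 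If instead $\|u_n\|_{C(\overline{\Omega})}\to\infty$, then hypothesis \eqref{asymptA} gives $\|u_n\|_{C(\overline{\Omega})}\leq c_0\lambda_n^{-1}$; passing to $v_n=u_n-1$ and then to $w_n=\lambda_n v_n$ (the scaling \eqref{change} with $p=2$), $w_n$ solves \eqref{pr:w} with $g=-r$ at parameter $\lambda_n\to 0$, and \eqref{asymptA} translates into a uniform bound $\sup_n\|w_n\|_{C(\overline{\Omega})}<\infty$. One then runs the blow-up analysis exactly as in the proof of Lemma \ref{lem:vaw} (invoking \cite[Sect.\ 5]{KLS2015}): the maximum of $w_n$ is attained on $\Gamma_+(-r)=\Gamma_-(r)$ because $w_n$ is harmonic and the Hopf lemma forces $\partial w_n/\partial\nu>0$ at an interior maximum of the boundary trace, rescaling around the maximum point produces a nontrivial entire solution of a half-space problem with a superlinear boundary nonlinearity, and subcriticality $p=2<N/(N-2)$ (i.e.\ $N\leq 3$) together with a Liouville-type theorem yields a contradiction; hence $\|w_n\|_{C(\overline{\Omega})}\to 0$, which via the scaling means $\|v_n\|_{C(\overline{\Omega})}=o(\lambda_n^{-1})$, and more precisely $w_n\to 0$ would make $v_n$ small, contradicting that $v_n$ is a nontrivial solution bounded below away from the constants — so this case cannot occur at all. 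Combining, for small $\lambda>0$ the only positive solutions are the constants, and among constants only $(\lambda,1)$ is a genuine (constant) positive solution in the relevant regime, giving the claim.

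\textbf{Main obstacle.} The delicate point is the case of large positive solutions with condition (b) when $\|u_n\|_{C(\overline{\Omega})}\to\infty$: one must show that the scaled functions $w_n=\lambda_n(u_n-1)$ genuinely inherit a blow-up profile to which the Liouville theorem of \cite{KLS2015} applies, even though $v_n=u_n-1$ changes sign. The subtlety is that the blow-up point lies on $\Gamma_-(r)=\Gamma_+(-r)$ and one needs it to be an interior point of that set (so that the limiting domain is a half-space and not a wedge), and one needs the sign-changing nature of $v_n$ not to obstruct the maximum-principle/Hopf argument locating the maximum; controlling the lower-order linear term $\lambda_n g w_n$ uniformly as $\lambda_n\to 0$ is routine given \eqref{asymptA}, but verifying that the rescaled limit is nonconstant (rather than collapsing to zero) is the crux, and it is exactly here that \eqref{asymptA} is used to keep the rescaling parameters comparable.
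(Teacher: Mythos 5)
Your first assertion and the reduction of the second assertion to excluding large positive solutions with condition (b) for small $\lambda$ follow the paper. The bounded subcase is also close in spirit, but you should note that since $u_n-1$ changes sign the constant limit is forced to be exactly $u_\infty\equiv 1$ (not merely some $c\in[0,1]$), and the contradiction then comes from the uniqueness of the bifurcation curve $\{(\lambda,1)\}$ emanating from the line of constants $\{(0,c)\}$ at $(0,1)$ (as in \cite[Proposition 3.1]{KRQU2020a}); an IFT argument ``near the stable constants $0$ and $1$'' does not apply as stated, because at $\lambda=0$ every constant solves \eqref{lpr}, so the linearization at a constant is degenerate, and moreover $(\lambda,1)$ is unstable here.

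The genuine gap is in the unbounded subcase. Your plan is to rerun the blow-up/Liouville argument of Lemma \ref{lem:vaw}, but this is both circular and inconclusive: \eqref{asymptA} is precisely the a priori bound that such a blow-up argument would deliver (see the Remark following Theorem \ref{thm:A3}), so once it is assumed there is nothing left for a blow-up contradiction to prove; a blow-up contradiction would in any case only yield boundedness of $w_n=\lambda_n(u_n-1)$, not $w_n\to0$, and certainly not the nonexistence of $u_n$. Your closing step (``contradicting that $v_n$ is bounded below away from the constants'') invokes a lower bound that is never established for sign-changing $v_n$, and $w_n\to0$ would not make $v_n=w_n/\lambda_n$ small since $\lambda_n\to0$. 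What is missing is the mechanism by which $\int_{\partial\Omega}r<0$ actually produces the contradiction. The paper sets $v_n=\lambda_n u_n$ (not $\lambda_n(u_n-1)$), which by \eqref{asymptA} is bounded in $C(\overline{\Omega})$ and solves $-\Delta v=0$ with $\partial v/\partial\nu=\lambda_n r(x)v-r(x)v^2$. If the weak limit $v_\infty\geq0$ is nontrivial on $\partial\Omega$, it is a positive solution of the $\lambda=0$ problem, which is impossible because $\int_{\partial\Omega}(-r)>0$. If $v_\infty\equiv0$ on $\partial\Omega$, then $v_n\to0$ in $H^1(\Omega)$, the normalizations $\psi_n=v_n/\|v_n\|$ converge to a positive constant, and testing the equation with $\varphi=1$ gives $\int_{\partial\Omega}r\psi_n=\|u_n\|\int_{\partial\Omega}r\psi_n^2$, whose left side tends to a finite negative limit while the right side tends to $-\infty$ because $\int_{\partial\Omega}r<0$ and $\|u_n\|\to\infty$. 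This normalization argument is absent from your proposal and cannot be replaced by the blow-up step you describe.
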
 

\begin{rem} 
To obtain \eqref{asymptA}, it is sufficient that \eqref{G} holds with $g=-r$, which follows from the same argument as given in the proof of Lemma \ref{lem:vaw}. Indeed, we can deduce the existence of $c_0>0$ such that $\| \lambda u \|_{C(\overline{\Omega})}\leq c_0$ for a positive solution $u$ of \eqref{lpr} for $\lambda \searrow 0$. 
\end{rem}

\subsection{Proof of Theorem \ref{thm:A1}}

The assertions in Theorem \ref{thm:A1} are deduced directly from Theorem \ref{thm3}, except for the nonexistence assertion for the large positive solutions with condition (b). We verify it using the sub and supersolution method (\cite[(2.1) Theorem]{Am76N}). 

Assume by contradiction that $u$ is a large positive solution of \eqref{lpr} with condition (b) for some $\lambda\in (0,\lambda_1(-r)]$. Note that $u-1$ changes sign. 
Say $\overline{u}:=\min (u,1)\in C(\overline{\Omega})$, and we reduce \eqref{lpr} to a fixed point equation for the continuous and compact mapping associated with \eqref{lpr} in the ordered Banach space $C(\overline{\Omega})$, where the mapping is strongly increasing in the order interval $[0, \overline{u}]$. This reduction is in the same spirit as in \cite[Section 2]{Um2010}. Then, we can show that $\overline{u}$ is a supersolution of the fixed point equation 
such that $0<\underline{u}\leq 1$ and $\underline{u}\not\equiv 1$ in $\overline{\Omega}$ (cf.\ \cite[Theorem 1.2]{Li82}). Note that $u\equiv 0$ is an unstable constant solution of \eqref{lpr} for every $\lambda>0$, thus, we can construct a positive subsolution $\underline{u}$ of the fixed point equation such that $0< \underline{u}\leq \overline{u}$ in $\overline{\Omega}$. By the sub and supersolution method, we infer the existence of a nonconstant positive solution $u_1$ of \eqref{lpr} such that $0<\underline{u}\leq u_1 \leq \overline{u}\leq 1$ in $\overline{\Omega}$, which is contradictory for \cite[Theorem 2.6]{MN2011jde}. 
The proof of Theorem \ref{thm:A1} is complete. \qed


\subsection{Proof of Theorem \ref{thm:A3}} 

The first assertion comes from Proposition \ref{prop:non} with $g=-r$. 
We next prove the second assertion. It suffices to verify that \eqref{lpr} has no large positive solution with condition (b) for $\lambda>0$ small. Assume by contradiction that \eqref{lpr} has positive solutions $\{ (\lambda_n, u_n)\}$ with $\lambda_n\searrow 0$ such that $u_n-1$ changes sign, and we deduce that $\| u_n \|_{C(\overline{\Omega})}\rightarrow \infty$. Indeed, if $\| u_n \|_{C(\overline{\Omega})}$ is bounded, then 
$\| u_n \|$ is also bounded, and so is $\| u_n  \|_{C^{\alpha}(\overline{\Omega})}$ by elliptic regularity. By the compactness, we deduce that up to a subsequence, 
$u_n \rightarrow u_\infty$ in $C(\overline{\Omega})$. We then infer that 
$u_\infty\equiv 1$ because $u_n -1$ changes sign and $u_{\infty}$ is a constant. However, this is contradictory, because we can show that $\{ (\lambda,1)\}$ is a unique bifurcation curve from $\{ (0,c)\}$ at $(\lambda,u)=(0,1)$ by a bifurcation approach relying on the Lyapunov and Schmidt reduction of \eqref{lpr}, similarly as in the proof of \cite[Proposition 3.11]{KRQU2020a}.

From \eqref{asymptA}, we obtain that $\| \lambda_n u_n \|_{C(\overline{\Omega})} \leq c_0$. Say $v_n=\lambda_n u_n$, and 
$(\lambda_n, v_n)$ is a positive solution of the problem:
\begin{align*} 
\begin{cases}
-\Delta v = 0 & \mbox{ in } \Omega, \\
\dfrac{\partial v}{\partial \nu}=\lambda r(x)v - r(x)v^2 
& \mbox{ on } \partial\Omega. 
\end{cases}    
\end{align*}
Since $\| v_n\|$ is bounded, for some $v_\infty \in H^1(\Omega)$ and up to a subsequence, $v_n \rightharpoonup v_\infty\geq0$, and $v_n \rightarrow v_\infty$ in $L^2(\Omega)$ and $L^3(\partial\Omega)$. The argument then proceeds by dividing it into the two cases:

(i) Case $v_\infty\not\equiv 0$ on $\partial\Omega$: 
By definition, 
\begin{align} \label{vnwsol}
\int_{\Omega}\nabla v_n \nabla \varphi - \lambda_n \int_{\partial\Omega} r(x)v_n \varphi +  \int_{\partial\Omega}r(x)v_n^2 \varphi=0,  
\quad \varphi \in H^1(\Omega). 
\end{align}
Passing to the limit, we deduce 
\begin{align*} 
\int_{\Omega}\nabla v_\infty \nabla \varphi +   \int_{\partial\Omega}r(x)v_\infty^2 \varphi=0.  
\end{align*}
Thus, by elliptic regularity we deduce that $v_\infty \in C^2(\overline{\Omega})$, and so $v_\infty>0$ in $\overline{\Omega}$ by the strong maximum principle and boundary point lemma, which is a contradiction because this implies $\int_{\partial\Omega}r>0$. 

(ii) Case $v_\infty\equiv 0$ on $\partial\Omega$: 
We then deduce that $v_n \rightarrow 0$ in $H^1(\Omega)$ because \eqref{vnwsol} with $\varphi=v_n$ implies 
\begin{align*}
\int_{\Omega}|\nabla v_n|^2 = \lambda_n \int_{\partial\Omega} r(x)v_n^2 
-\int_{\partial\Omega} r(x)v_n^3 \longrightarrow 0. 
\end{align*}
Say $\psi_n = \frac{v_n}{\| v_n\|}$, and $\| \psi_n \|=1$. Up to a subsequence and for some $\psi_{\infty}\in H^1(\Omega)$, $\psi_n \rightharpoonup \psi_\infty$, and $\psi_n \rightarrow \psi_\infty$ in $L^2(\Omega)$ and $L^3(\partial\Omega)$. Since 
\begin{align*}
\int_{\Omega} |\nabla \psi_n|^2=\lambda_n \int_{\partial\Omega}r \psi_n^2 - \| v_n\| \int_{\partial\Omega}r \psi_n^3\longrightarrow 0, 
\end{align*}
we infer that $\psi_n \rightarrow \psi_\infty$ in $H^1(\Omega)$, and $\psi_\infty$ is a positive constant. From \eqref{vnwsol} with $\varphi=1$, we infer that 
\begin{align*}
\int_{\partial\Omega} \lambda_n r(x) \psi_n = \int_{\partial\Omega} r(x) \psi_n^2 \| v_n\|. 
\end{align*}
Using $v_n = \lambda_n u_n$, we obtain that 
\begin{align*}
\int_{\partial\Omega}r(x)\psi_n = \| u_n \| \int_{\partial\Omega}r \psi_n^2.     
\end{align*}
It should be noted that $\| u_n \|\rightarrow \infty$ from the condition $\| u_n\|_{C(\overline{\Omega})}\rightarrow \infty$ by elliptic 
regularity. Passing to the limit, we deduce 
\begin{align*}
&\int_{\partial\Omega}r \psi_n \longrightarrow \psi_\infty  \int_{\partial\Omega}r < 0, \\ 
&\| u_n \| \int_{\partial\Omega}r \psi_n^2\longrightarrow -\infty, 
\end{align*}
which is a contradiction. The proof of Theorem \ref{thm:A3} is now complete. \qed

\subsection{Large positive solutions in the one dimensional case}

\label{subsec:1D}

In this subsection, we consider the one dimensional case of \eqref{lpr}: 
\begin{align} \label{lpr1}
\begin{cases}
-u^{\prime\prime}(x)=0, \quad  x\in I=(0,1), & \\
-u^{\prime}(0)=\lambda r_0 \, u(0)(1-u(0)), & \\
u^{\prime}(1)=\lambda r_1 \, u(1)(1-u(1)), & 
\end{cases}
\end{align}
where $\lambda>0$ is a parameter, and $r_0<0<r_1$. 
Then, it is necessary that the positive solutions $u$ are linear functions $u=cx+d$ with the constants $c$ and $d$ that satisfy $d>0$ and $c+d>0$. So, problem \eqref{lpr1} is reduced to solving the equation
\begin{align} \label{1prob}
\begin{cases}
&-c = \lambda r_0 d (1-d), \\
&c = \lambda r_1 (c+d)(1-c-d). 
\end{cases}
\end{align}

\begin{prop}\label{prop:1D}
Problem \eqref{lpr1} does not have any large positive solution with condition (b). 
\end{prop}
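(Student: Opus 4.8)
The plan is to use that in one space dimension every positive solution of \eqref{lpr1} is affine, so the whole question collapses to an elementary sign count. Write $u=cx+d$ and set $a:=u(0)=d>0$, $b:=u(1)=c+d>0$; then $u'\equiv c=b-a$, and since $\partial u/\partial\nu=-u'(0)$ at $x=0$ and $\partial u/\partial\nu=u'(1)$ at $x=1$, the two boundary conditions of \eqref{lpr1} become
\begin{align*}
a-b=\lambda\, r_0\, a(1-a), \qquad b-a=\lambda\, r_1\, b(1-b).
\end{align*}

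First I would translate condition (b) into a statement about $a$ and $b$. A large positive solution has $u>1$ somewhere in $[0,1]$, i.e.\ $\max\{a,b\}>1$, while condition (b) requires $u<1$ somewhere, i.e.\ $\min\{a,b\}<1$. For an affine function on $[0,1]$ these two requirements hold simultaneously if and only if $(a-1)(b-1)<0$; in particular $a\neq 1$ and $b\neq 1$, and we are in exactly one of the cases $a<1<b$ or $b<1<a$.

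Next I would add the two boundary relations. Their left-hand sides cancel, so (using $\lambda>0$) we obtain $r_0\,a(1-a)+r_1\,b(1-b)=0$, i.e.\ $r_0 F(a)=-\,r_1 F(b)$ with $F(t):=t(1-t)$. Because $a,b>0$ and $a,b\neq 1$, both sides are nonzero, so $r_0 F(a)$ and $r_1 F(b)$ have opposite signs. But now I would simply read off the signs using $r_0<0<r_1$: if $a<1<b$ then $F(a)>0$ and $F(b)<0$, so $r_0 F(a)<0$ and $r_1 F(b)<0$; if $b<1<a$ then $F(a)<0$ and $F(b)>0$, so $r_0 F(a)>0$ and $r_1 F(b)>0$. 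In either case $r_0 F(a)$ and $r_1 F(b)$ have the \emph{same} sign, a contradiction. Hence \eqref{lpr1} has no large positive solution with condition (b).

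There is no real obstacle in this argument; the only step deserving a sentence of care is the reduction in the second paragraph — verifying that for an affine profile on $[0,1]$ "large positive solution with condition (b)" is precisely the strict condition $(a-1)(b-1)<0$, and that the borderline possibilities $a=1$ or $b=1$ cannot occur (they would force $u$ into case (a) or make $u$ fail to be large). Everything after that is a one-line sign count.
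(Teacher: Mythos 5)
Your proof is correct and follows essentially the same route as the paper: reduce to the affine profile $u=cx+d$, observe that condition (b) forces $u(0)$ and $u(1)$ to lie on opposite sides of $1$, and derive a sign contradiction from the boundary conditions using $r_0<0<r_1$. The only cosmetic difference is that you sum the two boundary relations to extract the contradiction, whereas the paper reads it off a single boundary condition directly; both are the same one-line sign count.
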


\begin{proof}
We argue by contradiction. Let $u=cx+d$ be a positive solution of \eqref{lpr1} for some $\lambda>0$
 such that 
\begin{align*}
u(x_{-}):=\min_{x=0,1}u(x)<1<\max_{x=0,1}u(x)=:u(x_{+}). 
\end{align*} 
Then, it follows that either $(x_{-}, x_{+})=(0,1)$ or $(1,0)$. In both cases, the second assertion in \eqref{1prob} does not hold.
\end{proof}


Consequently, any nonconstant positive solution $(\lambda, u)$ of \eqref{lpr1} fulfills that either $u>1$ in $[0,1]$ or $0<u<1$ in $[0,1]$. Combining this assertion with Theorems \ref{thm:A1} and \ref{thm:A2} then provides the following corollary. It is understood that $r(0)=r_0$ and $r(1)=r_1$. 

\begin{cor} \label{cor:1dim}
The following {\rm two} assertions hold:  
\begin{enumerate} \setlength{\itemsep}{0.2cm} 
\item If $r_0+r_1\leq0$, then \eqref{lpr1} has a unique nonconstant positive solution $u_{\lambda}$ for $\lambda>\lambda_1(r)$, satisfying $0<u_{\lambda}<1$ in $[0,1]$, and there is no nonconstant positive solution of \eqref{lpr1} for any $\lambda\in (0,\lambda_1(r)]$. 
\item If $r_0+r_1>0$, then \eqref{lpr1} has a nonconstant positive solution $u_{\lambda}$ for $\lambda\neq \lambda_1(-r)$ and no nonconstant positive solution for $\lambda=\lambda_1(-r)$ such that $0<u_{\lambda}<1$ in $[0,1]$ for $\lambda>\lambda_1(-r)$, whereas $u_{\lambda}>1$ in $[0,1]$ for $\lambda\in (0,\lambda_1(-r))$. 
Additionally, $u_{\lambda}$ is unique for $\lambda>\lambda_1(-r)$. Moreover, given $r_0<0$, if we choose $r_1>0$ such that $r_0+r_1>0$ is sufficiently close to zero, then $u_{\lambda}$ is also unique for $\lambda\in (0,\lambda_1(-r))$. 
\end{enumerate}
\end{cor}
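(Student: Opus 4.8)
The plan is to assemble the statement from ingredients already in hand: the rigidity of Proposition \ref{prop:1D}, the description of the branch with $0<u<1$ recalled in Subsection \ref{subsec:kruleq1} (from \cite{MN2011jde, MN2016jmaa}), and the large‑solution results of Theorems \ref{thm:A1} and \ref{thm:A2} together with the nonexistence part of Theorem \ref{thm:A3}. The first step is to record the dichotomy forced by Proposition \ref{prop:1D}: in one dimension every nonconstant positive solution $u$ of \eqref{lpr1} satisfies either $0<u<1$ on $[0,1]$ or $u>1$ on $[0,1]$ (condition (a)). So, for each $\lambda$, counting nonconstant positive solutions amounts to counting the ``small'' ones and the ``large'' ones of type (a) separately.

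For assertion (i) I would take $r_0+r_1\le 0$, i.e.\ $\int_{\partial\Omega}r\le 0$. The first assertion of Theorem \ref{thm:A3} (which holds for any $N$, with no asymptotic hypothesis) rules out all large positive solutions of type (a), so by the dichotomy every nonconstant positive solution satisfies $0<u<1$. If $r_0+r_1<0$, then $\lambda_1(r)>0$ and Subsection \ref{subsec:kruleq1} (from \cite{MN2011jde}) gives a unique such solution $u_\lambda$ for $\lambda>\lambda_1(r)$ and none for $\lambda\in(0,\lambda_1(r)]$; if $r_0+r_1=0$, then $\lambda_1(r)=0$ and Subsection \ref{subsec:kruleq1} (from \cite{MN2016jmaa}) gives a unique such solution for every $\lambda>0$, while $(0,\lambda_1(r)]=\emptyset$. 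This is assertion (i).

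For assertion (ii), take $r_0+r_1>0$, so $\int_{\partial\Omega}r>0$ and $\lambda_1(-r)>0$. By the results of Subsection \ref{subsec:kruleq1} for a weight of positive integral, \eqref{lpr1} has exactly one nonconstant positive solution with $u\le 1$, necessarily satisfying $0<u_\lambda<1$, for $\lambda>\lambda_1(-r)$, and none with $u\le 1$ for $\lambda\le\lambda_1(-r)$. On the large side, Theorem \ref{thm:A1} (with $\int_{\partial\Omega}r>0$) gives at least one large positive solution of type (a) for every $\lambda\in(0,\lambda_1(-r))$ and none of type (a) for $\lambda\ge\lambda_1(-r)$, while Proposition \ref{prop:1D} excludes type (b) for all $\lambda$. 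Combining: for $\lambda\in(0,\lambda_1(-r))$ every nonconstant positive solution is large of type (a), hence $u_\lambda>1$ on $[0,1]$, and at least one exists; for $\lambda=\lambda_1(-r)$ there is none; for $\lambda>\lambda_1(-r)$ there is exactly one and it satisfies $0<u_\lambda<1$, so it is unique there. For the last clause I would fix $r_0<0$ and set $r_1=\delta\,|r_0|$ with $\delta>1$, so that $r$ coincides with the weight $r_\delta$ of Theorem \ref{thm:A2} built from the base function taking values $r_0$ and $|r_0|$ at the two boundary points (for which $\delta_0=1$ and $r_0+r_1=(\delta-1)|r_0|\to 0$ as $\delta\to 1$). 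In dimension $N=1$ the boundary is a two‑point set, so $\Gamma_+(-r)=\{0\}$ is trivially a compact $0$‑dimensional submanifold and condition \eqref{G} with $g=-r$ holds; hence, once $\delta$ is close enough to $1$, Theorem \ref{thm:A2}(i) applies and gives that $u_\lambda$ is the unique nonconstant positive solution of \eqref{lpr1} for every $\lambda\in(0,\lambda_1(-r))$.

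The proof is essentially an assembly, so the only thing needing care (the closest there is to an obstacle) is keeping the identifications straight: which principal eigenvalue ($\lambda_1(r)$ or $\lambda_1(-r)$) governs each $\lambda$‑regime under the reflection $U=1-u$; the fact that $r_\delta=\delta r^+-r^-$ in Theorem \ref{thm:A2} amplifies the \emph{positive} part of the weight (because the sign‑changing coefficient appearing in \eqref{lpr:v} is $-r$), so that ``$\delta$ close to $\delta_0$'' translates into ``$r_0+r_1>0$ small''; and the observation that hypothesis \eqref{G}, phrased for an $(N-1)$‑dimensional submanifold, is automatically satisfied in the degenerate case $N=1$. Once these are in place, every statement follows from the cited results without further analysis.
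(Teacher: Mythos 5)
Your assembly is correct and matches the paper's (very terse) argument: the paper likewise derives the corollary by combining the dichotomy from Proposition \ref{prop:1D} with Theorems \ref{thm:A1} and \ref{thm:A2}, the nonexistence part of Theorem \ref{thm:A3}, and the known results for solutions with $0<u<1$ recalled in Subsection \ref{subsec:kruleq1}. Your explicit identification of $r_\delta$ (with $\delta_0=1$ for the base weight and ``$\delta\searrow\delta_0$'' translating to ``$r_0+r_1\searrow 0$'') and the observation that \eqref{G} is automatic for $N=1$ are exactly the bookkeeping the paper leaves implicit.
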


A numerical observation of \eqref{lpr1} assists the understanding of the nonconstant positive solution set. 
This observation suggests that a nonconstant positive solution $u_{\lambda}$ is always unique if it exists, and one obtains the following limiting behaviors of $u_{\lambda}$ both as $\lambda\to\infty$ and $\lambda\searrow 0$:

(i) in each case of $r_0, r_1$, the limiting function of $u_{\lambda}$ as $\lambda\to\infty$ is given by $u_{\infty}(x)=x$, which is consistent with \cite[Theorem 4.1]{MN2011jde} and \cite[Theorem 1.3]{MN2016jmaa}; 

(ii) in the case when $r_0+r_1>0$, the limiting 
function of $\lambda u_{\lambda}$ as $\lambda\searrow 0$ is given by 
\begin{align*}
w_0(x) = -\frac{\left( 1-\sqrt{-\frac{r_0}{r_1}}\right)^2}{(-r_0)}x +\frac{1-\sqrt{-\frac{r_0}{r_1}}}{(-r_0)}, 
\end{align*}
which is the unique positive solution of the problem 
\begin{align*}
\begin{cases}
-w''=0 \quad \mbox{ in } (0,1), & \\
-w'(0)=-r_0 \, w(0)^2, & \\ 
w'(1)=-r_1 \, w(1)^2,  
\end{cases}    
\end{align*}
and thus, this is consistent with Theorem \ref{thm:A2} (iii).

To conclude this paper, we mention that when $N>1$, it is an open question whether or not \eqref{lpr} has bifurcating positive solutions from $\{ (\lambda,1)\}$ at a positive {\it non principal} eigenvalues of \eqref{epro01} with $g=-r$. 
If yes, then Proposition \ref{prop:non} tells us that these bifurcating positive solutions should be large positive solutions with condition (b). Otherwise, Corollary \ref{cor:1dim} could be extended to $N>1$ in a certain class of sign changing weights $r$. In the case of $r<0$ on $\partial\Omega$ (sign definite superlinear case), the existence of such bifurcating positive solutions 
was discussed in \cite{KLS2015}.



\begin{thebibliography}{99}                                      

{\small

\bibitem{AB2006}
G.\ A.\ Afrouzi, K.\ J.\ Brown, Positive mountain pass solutions for a semilinear elliptic equation with a sign-changing weight function, Nonlinear Anal.\ {\bf 64} (2006), 409--416. 


\bibitem{AT93}
S.\ Alama, G.\ Tarantello, On semilinear elliptic equations with indeﬁnite nonlinearities, Calc.\ Var.\ Partial Differential Equations {\bf 11} (1993), 439--475.


\bibitem{Am76N}
H.\ Amann, Nonlinear elliptic equations with nonlinear boundary conditions, New developments in differential equations (Proc.\ 2nd Scheveningen Conf., Scheveningen, 1975), pp.\ 43--63,  North-Holland Math.\ Studies, Vol.\ 21, North-Holland,  Amsterdam, 1976. 


\bibitem{ALG98}
H.\ Amann, J.\ L\'opez-G\'omez, A priori bounds and multiple solutions for superlinear indefinite elliptic problems, J.\  Differential Equations {\bf146} (1998), 336--374. 


\bibitem{BCDN94}
H.\ Berestycki, I.\ Capuzzo-Dolcetta, L.\ Nirenberg, Superlinear indefinite elliptic problems and nonlinear Liouville theorems, Top.\ Methods Nonlinear Anal.\ {\bf4} (1994), 59--78.


\bibitem{BCDN95}
H.\ Berestycki, I.\ Capuzzo-Dolcetta, L.\ Nirenberg, Variational methods for indefinite superlinearhomogeneous elliptic problems, Nonlinear Differential Equations Appl.\ {\bf2} (1995), 553--572.


\bibitem{BGH05}
D.\ Bonheure, J.\ M.\ Gomes, P.\ Habets, Multiple positive solutions of superlinear elliptic problems with sign-changing weight, J.\ Differential Equations {\bf214} (2005), 36--64. 


\bibitem{Bo11}
A.\ Boscaggin, A note on a superlinear indefinite Neumann problem with multiple positive solutions, J.\ Math.\ Anal.\ Appl.\ {\bf377} (2011), 259--268. 


\bibitem{BH90}
K.\ J.\ Brown, P.\ Hess, Stability and uniqueness of positive solutions for a semi-linear elliptic boundary value problem, Differential Integral Equations {\bf 3} (1990), 201--207. 


\bibitem{BL80}
K.\ J.\ Brown, S.\ S.\ Lin, On the existence of positive eigenfunctions for an eigenvalue problem with indefinite weight function, J.\ Math.\ Anal.\ Appl.\ {\bf 75} (1980), 112--120.


\bibitem{BZ03}
K.\ J.\ Brown, Y.\ Zhang, The Nehari manifold for a semilinear elliptic equation with a sign-changing weight function, J.\ Differential Equations {\bf193} (2003), 481--499. 


\bibitem{CL97}
W.\ Chen, C.\ Li, Indeﬁnite elliptic problems in a domain, Discrete Contin.\ Dyn.\ Systems {\bf3} (1997), 333--340.


\bibitem{CR71} 
M.\ G.\ Crandall, P.\ H.\ Rabinowitz, Bifurcation from simple eigenvalues, J.\ Funct.\ Anal.\ \textbf{8} (1971), 321--340.


\bibitem{DS2004} 
M.\ Delgado, A.\ Su\'arez, On the existence and multiplicity of positive solutions for some indefinite nonlinear eigenvalue problem, Proc.\ Amer.\ Math.\ Soc.\ {\bf 132} (2004), 1721--1728. 




\bibitem{Fl1975}
W.\ H.\ Fleming, A selection-migration model in population genetics, J.\ Math.\ Biol.\ {\bf 2} (1975), 219--233. 


\bibitem{KRQU2020a}
U.\ Kaufmann, H.\ Ramos Quoirin, K.\ Umezu, Nonnegative solutions of an indefinite sublinear Robin problem I: positivity, exact multiplicity, and existence of a subcontinuum, Ann.\ Mat.\ Pura Appl.\ (4) {\bf 199} (2020), 2015--2038.


\bibitem{KRQU2020}
U.\ Kaufmann, H.\ Ramos Quoirin, K.\ Umezu, Nonnegative solutions of an indefinite sublinear Robin problem II: local and global exactness results, Israel J.\ Math.\ {\bf 247} (2022), 661--696. 


\bibitem{KLS2015} 
C.-G.\ Kim, Z.-P.\ Liang, J.-P.\ Shi, Existence of positive solutions to a Laplace equation with nonlinear boundary condition, Z.\ Angew.\ Math.\ Phys.\ {\bf 66} (2015), 3061--3083.


\bibitem{KB2000}
B.\ Ko, K.\ Brown, The existence of positive solutions for a class of indefinite weight semilinear elliptic boundary value problems, Nonlinear Anal.\ {\bf 39} (2000), 587--597. 


\bibitem{Li82}
P.-L.\ Lions, On the existence of positive solutions of semilinear elliptic equations, SIAM Rev.\ {\bf 24} (1982), 441--467. 




\bibitem{LGbook}
J.\ L\'opez-G\'omez, Spectral theory and nonlinear functional analysis, Research Notes in Mathematics 426, Chapman \& Hall/CRC, Boca Raton, FL, 2001.


\bibitem{M2009na}
G.\ F.\ Madeira, Existence and regularity for a nonlinear boundary flow problem of population genetics, Nonlinear Anal.\ {\bf 70} (2009), 974--981.  




\bibitem{MN2011jde} 
G.\ F.\ Madeira, A.\ S.\ do Nascimento, Bifurcation of stable equilibria and nonlinear flux boundary condition with indefinite weight, J.\ Differential Equations {\bf 251} (2011), 3228--3247. 


\bibitem{MN2016jmaa}
G.\ F.\ Madeira, A.\ S.\ do Nascimento, Bifurcation of stable equilibria under nonlinear flux boundary condition with null average weight, J.\ Math.\ Anal.\ Appl.\ {\bf 441} (2016), 121--139.


\bibitem{Ou91}
T.\ Ouyang, On positive solutions of semilinear equations $\Delta u + \lambda u +hu^p=0$ on compact manifolds, Part II, Indiana Univ.\ Math.\ J.\ {\bf40} (1991), 1083--1141.


\bibitem{Pf1999}
K.\ Pfl\"uger, On indefinite nonlinear Neumann problems, Partial differential and integral equations (Newark, DE, 1997), 335--346, Int.\ Soc.\ Anal.\ Appl.\ Comput., {\bf2}, Kluwer Acad.\ Publ., Dordrecht, 1999. 


\bibitem{PW67}
M.\ H.\ Protter, H.\ F.\ Weinberger, Maximum principles in differential equations, Prentice-Hall, Inc., Englewood Cliffs, N.J. 1967. 


\bibitem{RQS2015}
H.\ Ramos Quoirin, A.\ Su\'arez, Positive solutions for some indefinite nonlinear eigenvalue elliptic problems with Robin boundary conditions, Nonlinear Anal.\ {\bf 114} (2015), 74--86. 


\bibitem{RQU2017}
H.\ Ramos Quoirin, K.\ Umezu, An indefinite concave-convex equation under a Neumann boundary condition I, Israel J.\ Math.\ {\bf220} (2017), 103--160. 


\bibitem{Ro2005}
J.\ D.\ Rossi, Elliptic problems with nonlinear boundary conditions and the Sobolev trace theorem, Stationary partial differential equations. Vol. II, 311--406, Handb.\ Differ.\ Equ., Elsevier/North-Holland, Amsterdam, 2005. 


\bibitem{Te96}
H.\ Tehrani, On indeﬁnite superlinear elliptic equations, Calc.\ Var.\ Partial Differential Equations {\bf4} (1996), 139--153. 


\bibitem{Te18}
A.\ Tellini, High multiplicity of positive solutions for superlinear indefinite problems with homogeneous Neumann boundary conditions, J.\ Math.\ Anal.\ Appl.\ {\bf467} (2018), 673--698.


\bibitem{Um2006}
K.\ Umezu, On eigenvalue problems with Robin type boundary conditions having indefinite coefficients, Appl.\ Anal.\ {\bf 85} (2006), 1313--1325. 


\bibitem{Um2010}
K.\ Umezu, Global bifurcation results for semilinear elliptic boundary value problems with indefinite weights and nonlinear boundary conditions, NoDEA Nonlinear Differential Equations Appl.\ {\bf 17} (2010), 323--336. 




\bibitem{Zh2003}
M.\ Zhu, On elliptic problems with indefinite superlinear boundary conditions, J.\ Differential Equations {\bf 193} (2003), 180--195. 

}
\end{thebibliography}
\end{document}